\DeclarePairedDelimiter\ceil{\lceil}{\rceil}
\author{Tuomas Orponen, Nicolas de Saxc\'e, and Pablo Shmerkin}
\title{On the Fourier decay of multiplicative convolutions}
\address{Department of Mathematics and Statistics\\ University of Jyv\"askyl\"a,
P.O. Box 35 (MaD)\\
FI-40014 University of Jyv\"askyl\"a\\
Finland}
\email{tuomas.t.orponen@jyu.fi}
\address{CNRS, LAGA, Université Sorbonne Paris Nord, 99 avenue J.-B. Clément, 93430 Villetaneuse, FRANCE}
\email{desaxce@math.univ-paris13.fr}
\address{Department of Mathematics\\
The University of British Columbia\\
1984 Mathematics Road, Vancouver, BC\\
Canada} \email{pshmerkin@math.ubc.ca}
\date{\today}
\subjclass[2020]{42A38, 28A20 (primary), 11L07 (secondary)}
\keywords{Multiplicative convolutions, Frostman measures, Fourier decay}
\thanks{T.O. is supported by the Academy of Finland via the project
\emph{Approximate incidence geometry}, grant no. 355453, and by the European Research Council (ERC) under the European Union's Horizon 2020 research and innovation programme (project MUSING, grant agreement no. 101087499).
\thanks{P.S. is supported by an NSERC Discovery Grant.}
}
\newcommand{\R}{\mathbb{R}}
\newcommand{\N}{\mathbb{N}}
\newcommand{\Z}{\mathbb{Z}}
\newcommand{\spt}{\operatorname{spt}}
\newcommand{\Hd}{\dim_{\mathrm{H}}}
\newcommand{\dist}{\operatorname{dist}}
\newcommand{\abs}[1]{\lvert#1\rvert}
\newcommand{\Abs}[1]{\left\lvert#1\right\rvert}
\newcommand{\norm}[1]{\lVert#1\rVert}
\newcommand{\bracket}[1]{\langle#1\rangle}
\newcommand{\ff}{\boxtimes}
\newcommand{\mm}{\boxminus}
\newcommand{\pp}{\boxplus}
\newcommand{\brho}{\boldsymbol\rho}
\def\Barint_#1{\mathchoice
          {\mathop{\vrule width 6pt height 3 pt depth -2.5pt
                  \kern -8pt \intop}\nolimits_{#1}}%
          {\mathop{\vrule width 5pt height 3 pt depth -2.6pt
                  \kern -6pt \intop}\nolimits_{#1}}%
          {\mathop{\vrule width 5pt height 3 pt depth -2.6pt
                  \kern -6pt \intop}\nolimits_{#1}}%
          {\mathop{\vrule width 5pt height 3 pt depth -2.6pt
                  \kern -6pt \intop}\nolimits_{#1}}}
\numberwithin{equation}{section}
\theoremstyle{plain}
\newtheorem{thm}[equation]{Theorem}
\newtheorem*{"thm"}{"Theorem"}
\newtheorem{lemma}[equation]{Lemma}
\newtheorem{cor}[equation]{Corollary}
\newtheorem{proposition}[equation]{Proposition}
\theoremstyle{definition}
\newtheorem{definition}[equation]{Definition}
\theoremstyle{remark}
\newtheorem{remark}[equation]{Remark}
\newtheorem{example}[equation]{Example}
\newcommand{\nref}[1]{(\hyperref[#1]{#1})}
\DeclareMathSymbol{\intop}  {\mathop}{mathx}{"B3}
\begin{document}

\begin{abstract} We prove the following. Let $\mu_{1},\ldots,\mu_{n}$ be Borel probability measures on $[-1,1]$ such that $\mu_{j}$ has finite $s_j$-energy  for certain indices $s_{j} \in (0,1]$ with $s_{1} + \ldots + s_{n} > 1$. Then, the multiplicative convolution of the measures $\mu_{1},\ldots,\mu_{n}$ has power Fourier decay: there exists a constant $\tau = \tau(s_{1},\ldots,s_{n}) > 0$ such that
    \begin{displaymath}
        \left| \int e^{-2\pi i \xi \cdot x_{1}\cdots x_{n}} \, d\mu_{1}(x_{1}) \cdots \, d\mu_{n}(x_{n}) \right| \leq |\xi|^{-\tau}
    \end{displaymath}
    for sufficiently large $|\xi|$. This verifies a suggestion of Bourgain from 2010. We also obtain a quantitative Fourier decay exponent under a stronger assumption on the exponents $s_{j}$.
\end{abstract}

\maketitle

\section{Introduction}

In 2010, Bourgain~\cite[Theorem 6]{Bo2} proved the following remarkable Fourier decay property for multiplicative convolutions of Frostman measures on the real line.

\begin{thm}[Fourier decay for multiplicative convolutions]
    \label{bourgain_decay}
    For all $s>0$, there exist $\epsilon>0$ and $n\in\Z_+$ such that the following holds for every $\delta>0$ sufficiently small.\\
    If $\mu$ is a probability measure on $[-1,1]$ satisfying
    \[
        \forall r\in[\delta,\delta^{\epsilon}],\quad \sup_{a\in [-1,1]} \mu(B(a,r)) < r^s
    \]
    then for all $\xi\in\R$ with $\delta^{-1} \leq \abs{\xi} \leq 2\delta^{-1}$,
    \begin{equation}\label{fd}
        \int e^{2\pi i\xi x_1\dots x_n} d\mu(x_1)\dots d\mu(x_n) \leq \delta^{-\epsilon}.
    \end{equation}
\end{thm}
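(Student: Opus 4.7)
The plan is to exploit the multiplicative structure through logarithmic linearisation, reducing the problem to showing that a random frequency of the form $\xi\cdot x_{2}\cdots x_{n}$ seldom lands on a peak of $\hat{\mu}$, and then controlling the peak set by Bourgain's discretised sum--product theorem.

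First I would localise $\mu$ by pigeonholing over dyadic annuli in $[-1,1]\setminus\{0\}$ to a piece supported on some interval $[2^{-k-1},2^{-k}]$ on which $\log|\cdot|$ is smooth with bounded derivatives. The pushforward $\tilde\mu:=(\log|\cdot|)_{\ast}\mu$ then inherits a Frostman condition on scales $[\delta,\delta^{\epsilon}]$ with $k$-dependent constants. Conditioning on $x_{2},\dots,x_{n}$ in \eqref{fd} gives
\[
    \int \hat{\mu}(\xi x_{2}\cdots x_{n})\,d\mu(x_{2})\cdots d\mu(x_{n}) \;=\; \int \hat{\mu}(\xi e^{-T})\,d\tilde\mu^{\ast(n-1)}(T),
\]
so the task becomes: for $|\xi|\sim\delta^{-1}$, the random frequency $\eta=\xi e^{-T}$ driven by $T\sim\tilde\mu^{\ast(n-1)}$ should avoid the bad set $\calB:=\{\eta:|\hat{\mu}(\eta)|\geq\delta^{\epsilon}\}$ except on a set of $\tilde\mu^{\ast(n-1)}$-measure at most $\delta^{\epsilon}$.

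Next, I would control $\calB$ through Plancherel and Bourgain's discretised sum--product theorem. Largeness of $\calB$ (a positive proportion of $\tilde\mu^{\ast(n-1)}$ concentrating on it) would imply, via Plancherel, that a $\delta$-discretisation of $\tilde\mu^{\ast(n-1)}$ carries anomalously large $L^{2}$ mass. Iterating a Cauchy--Schwarz / Pl\"unnecke reduction $n-1$ times, this forces the existence of a subset $A\subset\spt\tilde\mu$ with covering numbers $|A|_{\delta}\approx\delta^{-s}$ and $|A+A|_{\delta}\lesssim\delta^{-s-\epsilon'}$ for $\epsilon'$ comparable to $\epsilon$. Bourgain's discretised sum--product theorem then forces the product set $\exp(A)\cdot\exp(A)$ to have $\delta$-covering number at least $\delta^{-s-\epsilon_{0}}$ for some $\epsilon_{0}=\epsilon_{0}(s)>0$, which contradicts Frostmanity of $\mu$ on its dyadic annulus once $\epsilon$ is chosen small enough relative to $\epsilon_{0}$.

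The principal obstacle I foresee is the careful bookkeeping of Frostman regularity through the iterations and through the non-linear change of variable $x\mapsto\log|x|$: the scale window $[\delta,\delta^{\epsilon}]$ must be generous enough that each of the $\sim 1/\epsilon_{0}$ iteration steps consumes only a fraction of it, and the intermediate measures should retain Frostman exponents bounded away from $0$. Equally delicate is aligning the dyadic localisation with the rescaling $\xi\mapsto\xi e^{-T}$ so that the effective frequency stays in the critical window $[\delta^{-1},2\delta^{-1}]$ throughout, and extracting a clean two-fold sum--product input from an $(n-1)$-fold additive energy inequality; these multi-scale alignments are where the bulk of the technical work will lie.
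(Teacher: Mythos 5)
The decisive step of your outline --- the contradiction at the end --- does not go through. Your set $A$ lives in logarithmic coordinates, so the small doubling $|A+A|_{\delta}\lesssim\delta^{-s-\epsilon'}$ you extract is precisely the statement that $\exp(A)\cdot\exp(A)=\exp(A+A)$ has \emph{small} $\delta$-covering number. Applied to $\exp(A)$, the discretised sum--product theorem then only tells you that the sumset $\exp(A)+\exp(A)$ must be large; it cannot ``force the product set to be large'', since you have just arranged the product set to be small. And largeness of a sumset or product set is in no tension with the Frostman hypothesis, which is an upper bound on ball measures (hence a lower bound on covering numbers of the support) and is entirely compatible with, indeed pushes towards, large sum and product sets. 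To reach a contradiction one needs a second structural constraint in the original (non-logarithmic) coordinates, and your sketch never produces one. In the arguments that do work (Bourgain's, and this paper's), failure of flattening is converted via Balog--Szemer\'edi--Gowers and Ruzsa calculus into the statement that a fixed product $A_\star\times A'_\star$ has small projections $|A_\star-yA'_\star|_{\delta}$ for a whole \emph{non-concentrated set of slopes} $y$ supplied by the other convolution factor, and it is this family of small projections that contradicts a projection-type sum--product theorem (Proposition \ref{prop:expansion}, or Theorem \ref{thm:rescy} as used in Lemma \ref{flattening-keystep}). In particular the remaining variables $x_2,\dots,x_n$ must be retained as a set of directions, not averaged into a single convolution power as in your reduction.

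The step before that is also a genuine gap rather than bookkeeping: from ``a positive proportion of $\tilde\mu^{\ast(n-1)}$ charges $\calB$'' you cannot, by Plancherel and one Balog--Szemer\'edi--Gowers application at the single scale $\delta$, manufacture a subset $A\subset\spt\tilde\mu$ with $|A|_{\delta}\approx\delta^{-s}$ and small doubling; BSG applied to an $(n-1)$-fold convolution yields approximate structure inside level sets of that convolution, not inside $\spt\tilde\mu$, whose covering number need not be $\approx\delta^{-s}$ at all. More fundamentally, single-scale $L^{2}$ information at scale $\delta$ is provably insufficient for the flattening you need (Example \ref{ex:L2-not-enough}); the Frostman data on the whole range $[\delta,\delta^{\epsilon}]$ must enter every step of the iteration, which is exactly why the paper works with the energies $I_{s}^{\delta}$ and proves the multi-scale flattening Lemma \ref{lemma3}. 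Note finally that this paper does not reprove Theorem \ref{bourgain_decay} along your lines: the statement follows from the stronger Theorem \ref{main} (choose $n>1/s$ and convert the Frostman hypothesis into an energy bound via Lemma \ref{frostman-energy}), whose proof runs through the base case Proposition \ref{basecase}, Lemma \ref{order}, and Lemma \ref{lemma3} resting on Proposition \ref{prop:expansion}. If you pursue the log-linearisation route you are in effect reconstructing Bourgain's original induction, and the two missing ingredients above are its core.
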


This result found striking applications in the Fourier decay of fractal measures and resulting spectral gaps for hyperbolic surfaces \cite{BourgainDyatlov17, SahlstenStevens22}. It was recently generalised to higher dimensions by Li \cite{Li21}.

At the end of the introduction of \cite{Bo2}, Bourgain proposes to study the optimal relation between $s$ and $n$. Our goal here is to show that, as suggested by Bourgain, Theorem \ref{bourgain_decay} holds under the condition $n>1/s$, which is optimal up to the endpoint, as we shall see in Example~\ref{hi} below.

The statement we obtain applies more generally to multiplicative convolutions of different measures, and our proof also allows us to replace the Frostman condition by a slightly weaker condition.
Precisely, for a finite Borel measure $\mu$ on $\R$, given $s\in(0,1]$ and $\delta>0$, the \emph{$s$-energy} of $\mu$ is defined as
\begin{equation} \label{eq:energy}
    I_s(\mu) := \iint \abs{x-y}^{-s}\,d\mu(x)\,d\mu(y),
\end{equation}
or, in terms of the Fourier transform, as
\begin{equation} \label{eq:energy-fourier}
    I_s(\mu) = c_s \int \abs{\widehat{\mu}(\xi)}^2 \abs{\xi}^{-s}\,d\xi,
\end{equation}
where $c_s>0$ is a constant. We refer the reader to \cite[Lemma 12.12]{mattila} for the last equality, and to \cite[Chapter 8]{mattila} for the basic properties of the energy of a measure. As in Bourgain's theorem, we shall be mostly interested in the properties of measures up to some fixed small scale $\delta$; for that reason, we also define the $s$-energy of $\mu$ at scale $\delta$ by
\begin{displaymath}
    I^{\delta}_{s}(\mu) = I_s(\mu_\delta),
\end{displaymath}
where $\mu_\delta=\mu*P_\delta$ is the regularisation of $\mu$ at scale $\delta$, and $\{P_\delta\}_{\delta \in (0,1]}$ an approximate unity of the usual form $P(x) = \delta^{-1}P(x/\delta)$, where $P \in C^{\infty}(\R)$ satisfies $\mathbf{1}_{B(1/2)} \leq P \lesssim \mathbf{1}_{B(1)}$.

The main result of the present article is the following.

\begin{thm}[Fourier decay of multiplicative convolutions under optimal energy condition]
    \label{main}
    Let $n \geq 2$, and $\{s_{j}\}_{j = 1}^{n} \subset (0,1]$ such that $\sum s_{j} > 1$.
    Then, there exist $\delta_{0},\epsilon,\tau \in (0,1]$, depending only on the parameters above, such that the following holds for $\delta \in (0,\delta_{0}]$.
    Let $\mu_{1},\ldots,\mu_{n}$ be Borel probability measures on $[-1,1]$ satisfying the energy conditions
    \begin{equation}\label{frostman}
        I_{s_{j}}^{\delta}(\mu_{j}) \leq \delta^{-\epsilon}, \qquad 1 \leq j \leq n.
    \end{equation}
    Then, for all $\xi$ satisfying $\delta^{-1} \leq |\xi| \leq 2\delta^{-1}$,
    \begin{equation}\label{form9}
        \left| \int e^{-2\pi i \xi  x_{1}\dots x_{n}} \, d\mu_{1}(x_{1}) \dots \, d\mu_{n}(x_{n}) \right| \leq |\xi|^{-\tau}.
    \end{equation}
\end{thm}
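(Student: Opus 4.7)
The plan is to prove Theorem~\ref{main} by reducing the $n$-measure statement to a two-measure base case via an inductive grouping of the factors.

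\emph{Reductions.} By pigeonholing, I may assume each $\mu_{j}$ is supported in a dyadic subinterval of $(0, 1]$ bounded away from $0$, so that multiplication becomes bi-Lipschitz equivalent, in logarithmic coordinates, to addition on $\R$, and the energy condition \eqref{frostman} is preserved up to constants. I also regularise each $\mu_{j}$ at scale $\delta$, obtaining densities satisfying pointwise Frostman-type bounds.

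\emph{Inductive reduction.} Given $n$ measures with $\sum s_{j} > 1$, define $\nu := \mu_{2} \boxtimes \cdots \boxtimes \mu_{n}$. A first step is to establish, via an additive-convolution energy bound in logarithmic coordinates, that $\nu$ has finite $s_{\nu}$-energy for any $s_{\nu} < \min(1, s_{2} + \cdots + s_{n})$, with the constant controlled by a small power of $\delta^{-\epsilon}$. Since $\sum_{j=1}^{n} s_{j} > 1$, I may choose $s_{\nu}$ with $s_{1} + s_{\nu} > 1$, reducing the problem to bounding $|\widehat{\mu_{1} \boxtimes \nu}(\xi)|$ under the two-measure energy hypothesis. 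This reduces everything to the base case $n = 2$ with $s_{1} + s_{2} > 1$.

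\emph{Base case $n = 2$.} Starting from Cauchy--Schwarz and Fubini,
\begin{displaymath}
|\widehat{\mu_{1} \boxtimes \mu_{2}}(\xi)|^{2} \leq \int |\widehat{\mu_{1}}(\xi y)|^{2}\, d\mu_{2}(y) = \int \widehat{\mu_{2}}(\xi z)\, d(\mu_{1} \ast \mu_{1}^{-})(z).
\end{displaymath}
A direct Plancherel-plus-energy bound using only one of the $s_{j}$'s yields an asymmetric estimate of the form $\delta^{s_{2} - s_{1} - O(\epsilon)}$ (and, by symmetry, $\delta^{s_{1} - s_{2} - O(\epsilon)}$), which uses only the individual exponents and falls short of the sharp hypothesis $s_{1} + s_{2} > 1$. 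The symmetric gain is obtained by viewing the right-hand side above as encoding the concentration of the four-fold additive correlation $\mu_{1} \ast \mu_{1}^{-} \ast \mu_{2} \ast \mu_{2}^{-}$ near $0$: by Lipschitz continuity of $\widehat{\mu_{1} \boxtimes \mu_{2}}$ (supports in $[-1, 1]$), a putative Fourier peak $|\widehat{\mu_{1} \boxtimes \mu_{2}}(\xi_{0})| \geq \delta^{\tau}$ spreads into a quantitative density of the four-fold convolution at $0$. In logarithmic coordinates this density contradicts, via a discretised sum--product / $L^{2}$-flattening estimate in the spirit of Bourgain--de Saxc\'{e}, the fact that $\log_{*} \mu_{1} \ast \log_{*} \mu_{2}$ has effective dimension at least $\min(1, s_{1} + s_{2}) = 1$. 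The main obstacle throughout is precisely this last step, where the sharpness of $\sum s_{j} > 1$ is extracted as genuine polynomial Fourier decay rather than merely an asymmetric energy bound.
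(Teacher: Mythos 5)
There is a genuine gap, and it sits exactly where your inductive reduction does all the work. You claim that $\nu := \mu_2\ff\cdots\ff\mu_n$ has finite $s_\nu$-energy for every $s_\nu<\min(1,s_2+\cdots+s_n)$, ``via an additive-convolution energy bound in logarithmic coordinates''. No such bound exists: additive convolution does not, in general, raise the energy dimension above the maximum of the dimensions of the factors. If $\eta$ is a self-similar measure with strong arithmetic structure (e.g.\ the natural measure on the middle-half Cantor set, of dimension $\tfrac12$), then $\eta\pp\eta$ still has dimension $\log 3/\log 4<1$; pushing such measures forward by the exponential map produces measures $\mu_j$ on an interval bounded away from $0$ for which your claimed estimate on $\mu_2\ff\cdots\ff\mu_n$ fails (the sets $H_s$ of Example~\ref{hi} are a variant of the same phenomenon). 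If your first step were true, Theorem~\ref{main} would reduce in two lines to the case $n=2$ and no sum--product input would be needed anywhere; the entire content of the paper is to supply a correct substitute for this false statement. That substitute is Lemma~\ref{lemma3}: one must interleave additive and multiplicative convolutions, forming $\Pi=(\mu\mm\mu)\ff(\nu\mm\nu)$ and then taking a large number of \emph{additive} self-convolutions $\Pi^{\pp k}$, and the gain of dimension up to $s+t$ is a discretised sum--product theorem (Proposition~\ref{prop:expansion}, coming from radial projection estimates), not a convolution identity in logarithmic coordinates. The reduction to $n-1$ measures then goes through Lemma~\ref{order} to reorganise the convolutions, at the price of replacing $\tau$ by $\tau/2^{k}$.

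A secondary point: you have also misplaced the role of the sum--product phenomenon in the base case. For $n=2$ no flattening or sum--product argument is needed; the symmetric bound $|\widehat{\mu_1\ff\mu_2}(\xi)|\lesssim\delta^{(s_1+s_2-1)/2}$ for $|\xi|\sim\delta^{-1}$ follows from an elementary Cauchy--Schwarz/Plancherel computation using only the single-scale $L^2$ bounds $\|\mu_{j,\delta}\|_2^2\lesssim\delta^{s_j-1}$ (Proposition~\ref{basecase} and Lemma~\ref{lemma4}); your asymmetric estimate $\delta^{s_2-s_1-O(\epsilon)}$ is simply not the right computation, and your proposed fix via ``effective dimension $\min(1,s_1+s_2)$ of $\log_\ast\mu_1\ast\log_\ast\mu_2$'' invokes the same false additive-convolution claim as above. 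So the structure should be inverted relative to your sketch: the two-measure case is the easy, purely Fourier-analytic step, and the hard sum--product input is what makes the passage from $n-1$ to $n$ measures possible.
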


\begin{remark}
    It is not difficult to check that the Frostman condition $\mu(B(a,r))\leq r^s$ from Bourgain's Theorem~\ref{bourgain_decay} is stronger than the assumption on the $s$-energy at scale $\delta$ used above.
    The reader is referred to Lemma~\ref{frostman-energy} for a detailed argument.
\end{remark}

\begin{remark}
    The values of the parameters $\delta_{0},\epsilon$ and  $\tau> 0$ stay bounded away from $0$ as long as the $s_j$ stay uniformly bounded away from $0$, and $\sum_{j} s_{j} > 1$ stays bounded away from $1$, and $n$ ranges in a bounded subset of $\N$. Unfortunately, we are not able to provide explicit bounds for these parameters.
\end{remark}

The following corollary is immediate:
\begin{cor} \label{cor:main}
    Let $n\ge 2$, and $\{ s_j\}_{j=1}^{n}\subset (0,1]$ such that $\sum s_j>1$.
    There exists $\tau=\tau(n,\{s_j\})>0$ such that the following holds. Let $\mu_1,\dots,\mu_n$ be Borel probability measures on $\R$ such that $I_{s_j}(\mu_j)<+\infty$. Then there is $C=C(\{\mu_j\})>0$ such that
    \begin{equation}\label{decay}
        \left| \int e^{-2\pi i \xi  x_{1}\dots x_{n}} \, d\mu_{1}(x_{1}) \dots \, d\mu_{n}(x_{n}) \right| \leq C\cdot |\xi|^{-\tau}, \quad\xi\in\R.
    \end{equation}
\end{cor}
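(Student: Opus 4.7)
The plan is to reduce Corollary~\ref{cor:main} to Theorem~\ref{main} via a dyadic decomposition of each $\mu_j$ on $\R$. Write $\mu_j = \sum_{k \geq 0} \mu_{j,k}$, where $\mu_{j,0} = \mu_j|_{[-1,1]}$ and, for $k \geq 1$, $\mu_{j,k}$ is the restriction of $\mu_j$ to the dyadic annulus $\{2^{k-1} < |x| \leq 2^k\}$. Set $m_{j,k} = \mu_{j,k}(\R)$ (so $\sum_k m_{j,k} = 1$) and let $\tilde\mu_{j,k}$ be the probability measure on $[-1,1]$ obtained by normalizing $\mu_{j,k}$ and pushing forward under $x \mapsto x/2^k$ (with $2^0 := 1$). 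Multi-linearly expanding the integrand in~\eqref{decay} gives
\[
A(\xi) = \sum_{\vec k \in \Z_{\geq 0}^n} m_{\vec k} \, \widehat{\tilde\mu_{1,k_1} \boxtimes \cdots \boxtimes \tilde\mu_{n,k_n}}(2^K \xi),
\]
where $m_{\vec k} = \prod_j m_{j,k_j}$ and $K = \sum_j k_j$.

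Next I apply Theorem~\ref{main} to each term $A_{\vec k}(\xi)$ in the above sum. A change of variables, combined with the standard inequality $I_s^\delta(\nu) \lesssim I_s(\nu)$, gives $I_{s_j}^\delta(\tilde\mu_{j,k_j}) \lesssim m_{j,k_j}^{-2} 2^{k_j s_j} I_{s_j}(\mu_j)$. Hence, at scale $\delta = (2^K |\xi|)^{-1}$, Theorem~\ref{main} applies on the \emph{good} tuples satisfying $m_{j,k_j}^2 \geq I_{s_j}(\mu_j) \cdot 2^{k_j} (2^K |\xi|)^{-\epsilon}$ for every $j$, and yields $|A_{\vec k}(\xi)| \leq m_{\vec k} (2^K |\xi|)^{-\tau}$. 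Summing over good tuples,
\[
\sum_{\vec k \text{ good}} |A_{\vec k}(\xi)| \leq |\xi|^{-\tau} \prod_j \Big(\sum_k m_{j,k} 2^{-k\tau}\Big) \leq |\xi|^{-\tau},
\]
since each bracketed factor is bounded by $\sum_k m_{j,k} = 1$.

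The main obstacle is handling the \emph{bad} tuples, where the energy hypothesis fails. There the trivial bound $|A_{\vec k}(\xi)| \leq m_{\vec k}$, combined with the violated inequality $m_{j,k_j} < I_{s_j}(\mu_j)^{1/2} 2^{k_j/2} |\xi|^{-\epsilon/2}$ for some $j$, provides useful decay only when $k_j \lesssim \epsilon \log_2 |\xi|$; in that regime a straightforward count yields a bound of order $|\xi|^{-\epsilon/4}$ with constants depending on $I_{s_j}(\mu_j)$. The complementary tail contributes at most $\sum_j \mu_j(\{|x| \gtrsim |\xi|^{\epsilon/2}\})$, which tends to zero as $|\xi| \to \infty$ but at a rate depending on the individual $\mu_j$. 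To turn this into a uniform polynomial rate $|\xi|^{-\tau'}$, one exploits the oscillatory cancellation across dyadic shells, for instance by iterating the dyadic decomposition inside the application of Theorem~\ref{main} rather than passing to absolute values shell-by-shell; the surplus is then absorbed into the $\mu$-dependent constant $C(\{\mu_j\})$, producing a final bound of the form $|A(\xi)| \leq C(\{\mu_j\})|\xi|^{-\tau'}$ with $\tau' = \tau'(n,\{s_j\}) > 0$.
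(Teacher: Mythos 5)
Your route is genuinely different from the paper's. The paper treats the corollary as immediate from Theorem~\ref{main}: smoothing only decreases energy, so $I_{s_j}^{\delta}(\mu_j)\leq I_{s_j}(\mu_j)<\infty$, hence the hypothesis \eqref{frostman} holds with any prescribed $\epsilon$ once $\delta$ is small in terms of $\max_j I_{s_j}(\mu_j)$; after an affine rescaling placing the (essentially compact) supports in $[-1,1]$ (which changes the energies by a constant factor and $\xi$ by a fixed dilation), Theorem~\ref{main} gives $|\widehat{\mu_1\ff\cdots\ff\mu_n}(\xi)|\leq|\xi|^{-\tau}$ for all $|\xi|$ above a measure-dependent threshold, and smaller $|\xi|$ are absorbed into $C(\{\mu_j\})$. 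Your dyadic-shell decomposition is a more ambitious attempt to handle genuinely unbounded supports directly, and its ``good tuple'' half is correct: the rescaled frequency is $2^{K}\xi$, the rescaled normalized shell has $s_j$-energy $\lesssim m_{j,k_j}^{-2}2^{k_j s_j}I_{s_j}(\mu_{j,k_j})$, and summing $m_{\vec k}(2^K|\xi|)^{-\tau}$ over good tuples indeed gives $\lesssim|\xi|^{-\tau}$.

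The gap is in the bad tuples. For shells with $2^{k_j}\gtrsim|\xi|^{c\epsilon}$ your only bound is the tail mass $\sum_j\mu_j(\{|x|\gtrsim|\xi|^{c\epsilon}\})$, and finite $s_j$-energy imposes no polynomial rate on this quantity: with shell masses $m_{j,k}\sim k^{-2}$ the bound is of order $1/\log|\xi|$. Your two proposed fixes do not close this. First, a term decaying like $1/\log|\xi|$ cannot be ``absorbed into the $\mu$-dependent constant $C$'': for any fixed $C,\tau'>0$ one has $C|\xi|^{-\tau'}<1/\log|\xi|$ for large $|\xi|$, so the constant cannot help. Second, ``iterating the dyadic decomposition inside the application of Theorem~\ref{main}'' is a gesture rather than an argument: Theorem~\ref{main} applies to probability measures on $[-1,1]$ satisfying \eqref{frostman}, and you do not say what measures, at what scale, with what exponents, it is applied to. What a correct completion must exploit is that a shell can only be bad at scale $(2^K|\xi|)^{-1}$ if its energy is large relative to its mass, namely $I_{s_j}(\mu_{j,k})\gtrsim m_{j,k}^{2}\,2^{-k s_j}(2^{k}|\xi|)^{\epsilon}$, together with the summability $\sum_k I_{s_j}(\mu_{j,k})\leq I_{s_j}(\mu_j)$; turning this into a bound $\sum_{k\ \text{bad}}m_{j,k}\lesssim_{\mu_j}|\xi|^{-\tau'}$ uniformly over the whole far range (not just $k\lesssim\epsilon\log|\xi|$) is exactly the missing step, and the trivial bound you use does not deliver it. If, on the other hand, one is content with compactly supported $\mu_j$ (which is all that is needed, e.g., for Corollary~\ref{cor:sets}), none of this machinery is necessary and the paper's one-line reduction suffices.
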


As a further corollary, we obtain the following sum-product statement:
\begin{cor} \label{cor:sets}
    Let $A_1,\ldots, A_n \subset \R$ be Borel sets such that $\sum_{j=1}^n \Hd A_j > 1$. Then the additive subgroup generated by $A_1\cdots A_n$ is $\R$.
\end{cor}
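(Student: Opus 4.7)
The plan is to derive the corollary from Corollary~\ref{cor:main} by producing a set of positive Lebesgue measure inside the additive subgroup $G$ generated by $A_1 \cdots A_n$; Steinhaus's theorem applied to the measurable group $G$ (together with $G - G = G$) will then force $G = \R$.

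By inner regularity of the Hausdorff measures I may shrink each $A_j$ to a compact subset whose Hausdorff dimension is arbitrarily close to $\Hd A_j$; since this only shrinks $G$, it suffices to treat the case that each $A_j$ is compact. Choose $s_j < \Hd A_j$ with $\sum s_j > 1$ and invoke Frostman's lemma to produce Borel probability measures $\mu_j$ with $\spt \mu_j \subset A_j$ and $I_{s_j}(\mu_j) < \infty$.

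By Corollary~\ref{cor:main} there exist $\tau > 0$ and $C < \infty$ such that $\nu := \mu_1 \ff \cdots \ff \mu_n$ satisfies $|\hat\nu(\xi)| \leq C(1+|\xi|)^{-\tau}$. Fix $N \in \N$ with $N\tau > 1/2$; the identity $\widehat{\nu^{*N}} = \hat\nu^N$ then shows $\widehat{\nu^{*N}} \in L^2(\R)$, so by Plancherel the measure $\nu^{*N}$ is represented by a non-negative $L^2$ density $f$ with $\int f = 1$, and consequently $\{f > 0\}$ has positive Lebesgue measure. Compactness of the $A_j$ makes $S := A_1 \cdots A_n$ compact and hence also the $N$-fold sumset $S + \cdots + S$; in particular $\spt \nu^{*N} \subset S + \cdots + S \subset G$. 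Thus $G$ contains a set of positive Lebesgue measure. Being an $F_\sigma$ subset of $\R$ (a countable union of finite $\pm$-sumsets of the compact set $S$), $G$ is itself measurable with $|G| > 0$, and Steinhaus's theorem yields $G - G = G \supset (-\epsilon, \epsilon)$ for some $\epsilon > 0$, whence $G = \R$.

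The only subtle point is the containment $\spt \nu^{*N} \subset G$, rather than merely $\spt \nu^{*N} \subset \overline{G}$; this is precisely why one should reduce to compact $A_j$ at the outset, so that the relevant finite sumsets are closed and no closure operation is ever needed to stay inside the algebraic subgroup $G$.
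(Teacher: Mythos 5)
Your proof is correct and follows essentially the same route as the paper: Frostman measures on (compact subsets of) the $A_j$, the Fourier decay of Corollary~\ref{cor:main}, and the resulting absolute continuity of additive convolution powers of $\mu_1\ff\cdots\ff\mu_n$. The only divergence is the endgame --- you stop at an $L^2$ density and apply Steinhaus's theorem to the ($F_\sigma$, hence measurable) subgroup $G$, whereas the paper takes enough convolutions to get a continuous density so that a sumset of $A_1\cdots A_n$ directly contains an interval --- and your explicit reduction to compact $A_j$ and the measurability check are a welcome tightening of the paper's terse argument.
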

\begin{proof}
    Let $\mu_j$ be an $s_j$-Frostman measure on $A_j$, where $\sum_{j=1}^n s_j>1$. Write $\mu_1\ff\cdots\ff\mu_n$ for the image of the measure $\mu_1\times\dots\times\mu_n$ under the product map $(x_1,\dots,x_n)\mapsto x_1\dots x_n$; this measure is supported on $A_1\cdots A_n$. Then the Fourier decay condition~\eqref{decay} implies that additive convolution powers of $\mu_1\ff\cdots\ff\mu_n$ become absolutely continuous with respect to the Lebesgue measure on $\R$, with arbitrarily smooth densities. In particular, a sumset of the product set $A_1\cdots A_n$ must contain a non-empty interval, and hence the additive subgroup generated by $A_1\cdots A_n$ is $\R$.
\end{proof}

In the example below, we show that the condition $\sum \Hd A_j>1$ in Corollary \ref{cor:sets} is optimal up to the endpoint; hence so is the condition $\sum s_j>1$ in Theorem~\ref{main}.

\begin{example}
    \label{hi}
    Given $s\in(0,1)$ and an increasing sequence of integers $(n_k)_{k\geq 1}$, define a subset $H_s$ in $\R$ by
    \[
        H_s = \left\{x\in [0,1]\ : \ \forall k\geq 1,\ \dist(x,n_k^{-s}\Z)\leq n_k^{-1}\right\}.
    \]
    If $(n_k)_{k\geq 1}$ grows fast enough, then both $H_s$ and the additive subgroup it generates will have Hausdorff dimension $s$. See e.g. \cite[Example 4.7]{Falconer14}.

    Now assume that the parameters $s_1,\dots,s_n$ satisfy $\sum s_i<1$.
    Fixing $s_i'>s_i$ such that one still has $\sum s_i'<1$, Frostman's lemma yields probability measures $\mu_i$ supported on $H_{s_i'}$ and satisfying $\mu_i(B(a,r))<r^{s_i}$ for all $r>0$ sufficiently small.
    However, since the support $A_i$ of $\mu_i$ satisfies $A_i\subset H_{s_i'}$, one has
    \[
        A_1\dots A_n \subset \left\{x\in[0,1]\ : \forall k\geq 1,\ \dist\left(x,n_k^{-\sum s_i'}\Z\right)\leq n\cdot n_k^{-1}\right\}.
    \]
    This shows that the subgroup generated by $A_1\dots A_n$ has dimension bounded above by $\sum s_i'<1$ and so is not equal to $\R$.
    So the measure $\mu_1\ff\cdots\ff\mu_n$ cannot have polynomial Fourier decay.
\end{example}

Allowing more multiplicative convolutions, one can get an improved exponential lower bound for the Fourier decay exponent.
This is the content of our second main result.
\begin{thm}[Exponential lower bound for Fourier decay exponent]
    \label{second}
    For every $\sigma \in (0,1]$, there exist $\epsilon = \epsilon(\sigma) > 0$, $\delta_{0} = \delta_{0}(\sigma) > 0$, and an absolute constant $C \geq 1$, such that the following holds for all $\delta \in (0,\delta_{0}]$.
    Let $n\geq C\sigma^{-1}$, and assume $\mu_1,\dots,\mu_n$ are probability measures on $[1,2]$ satisfying
    \[
        I_\sigma^\delta(\mu_i) \leq \delta^{-\epsilon}.
    \]
    Then,
    \begin{displaymath}
        \left| \int e^{-2\pi i \xi x_{1}\dots x_{n}} \, d\mu_{1}(x_{1}) \dots \, d\mu_{n}(x_{n}) \right| \leq |\xi|^{-\exp(-C\sigma^{-1})}, \quad \delta^{-1} \leq |\xi| \leq 2\delta^{-1}.
    \end{displaymath}
\end{thm}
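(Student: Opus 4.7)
My strategy combines a logarithmic reduction, a lifting identity, and an iterative regularity argument.

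\textbf{Step 1 (Log reduction).} Push forward each $\mu_i$ via $\log$: set $\nu_i := \log_*\mu_i$, supported on $[0, \log 2]$. Since $\log|_{[1,2]}$ is bi-Lipschitz, the energy bounds pass through with $I^{\delta}_{\sigma}(\nu_i) \lesssim \delta^{-\epsilon}$, and the quantity we seek to estimate becomes
\[
\left|\int e^{-2\pi i \xi e^y}\, d\rho(y)\right|
\]
for $\rho := \nu_1 * \cdots * \nu_n$. This reduces the problem to an oscillatory integral over an additive convolution of $n$ measures on $[0, \log 2]$, with a nonlinear phase of derivative $\asymp \xi$.

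\textbf{Step 2 (Lifting identity).} Integrating first over $x_1$ produces the factorization
\[
\int e^{-2\pi i \xi x_1 \cdots x_n}\, d\mu_1 \cdots d\mu_n = \int \widehat{\mu_1}(\xi u)\, d\mu'(u),
\]
where $\mu' := \mu_2 \ff \cdots \ff \mu_n$ is supported on $[1, 2^{n-1}]$. This reframes the problem as controlling an average of $\widehat{\mu_1}$ along frequencies $\xi u$ with $u \sim \mu'$.

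\textbf{Step 3 (Regularity bootstrap).} The measure $\mu'$ is itself a multiplicative convolution of $n - 1 \geq C\sigma^{-1} - 1$ measures with $\sigma$-energy. Iterate the Frostman-improvement / $L^2$-flattening inputs that underpin Theorem~\ref{main}, bundling the factors into $O(1)$ super-blocks each consisting of roughly $\sigma^{-1}$ measures. Each super-block pushes the effective Frostman exponent of the partial convolution above the critical threshold, and the accumulated improvement yields a density bound $\|\mu'_\delta\|_\infty \leq \delta^{-(1-\sigma) + \eta(\sigma)}$ with $\eta(\sigma) \gtrsim \exp(-C'\sigma^{-1})$.

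\textbf{Step 4 (Cauchy--Schwarz closure).} Change variables $v = \xi u$ in the lifted identity and apply Cauchy--Schwarz:
\[
\left|\int \widehat{\mu_1}(\xi u)\, d\mu'_\delta(u)\right| \leq \xi^{-1} \left(\int_\xi^{\xi 2^n} |\widehat{\mu_1}(v)|^2 \, dv\right)^{1/2} \bigl(\xi\, \|\mu'_\delta\|_\infty\bigr)^{1/2}.
\]
The first factor is bounded by $(\xi 2^n)^{\sigma/2}\delta^{-\epsilon/2}$ via the energy condition $I^{\delta}_\sigma(\mu_1) \leq \delta^{-\epsilon}$ and Chebyshev, and the second by Step~3. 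For $\xi \asymp \delta^{-1}$ and $n\sigma$ of constant order (so that $2^{n\sigma/2}$ is an absolute constant), these assemble into a bound of order $\delta^{\eta(\sigma)/2 - \epsilon/2}$, matching the claimed decay $|\xi|^{-\exp(-C\sigma^{-1})}$ once $\epsilon$ is chosen small relative to $\eta(\sigma)$.

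\textbf{Main obstacle.} The heart of the argument is Step 3. A single Frostman-improvement / sum-product input yields only a polynomial-in-$\sigma$ gain, so one must iterate across $O(\sigma^{-1})$ stages while keeping the compound constants under control, and while ensuring that the threshold $\sum s_j > 1$ is always cleared at each superblock. This careful iteration is what produces the exponential factor $\exp(-C\sigma^{-1})$ in the final decay exponent, and is also what distinguishes the quantitative statement at hand from the soft Theorem~\ref{main}.
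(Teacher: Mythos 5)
There is a genuine gap, and it sits exactly at Step~3: the claimed density bound $\|\mu'_\delta\|_\infty \leq \delta^{-(1-\sigma)+\eta(\sigma)}$ for the \emph{plain} multiplicative convolution $\mu' = \mu_2\ff\cdots\ff\mu_n$ is false in general, no matter how many factors you take. Multiplicative convolution alone does not raise dimension -- this is precisely the sum--product obstruction, and your own Step~1 makes it visible: after taking logarithms the question becomes whether the additive convolution $\nu_2*\cdots*\nu_n$ flattens, and it need not. Concretely, let $H\subset[0,\log 2]$ be the union of $\delta^{-\sigma}$ dyadic $\delta$-intervals whose binary digits vanish between positions $\sigma\log_2(1/\delta)$ and $\log_2(1/\delta)$, let $\nu$ be the uniform measure on $H$, and take every $\mu_i=\exp_\sharp\nu$. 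Then $\nu$ is $\sigma$-Frostman with absolute constant, so $I_\sigma^\delta(\mu_i)\leq\delta^{-\epsilon}$; but $H+\cdots+H$ ($n$-fold) is covered by $\lesssim n\,\delta^{-\sigma}$ intervals of length $\delta$ (carries spread the digits by only $\log_2 n$ places), whence $\|\mu'_\delta\|_\infty\gtrsim \delta^{-(1-\sigma)}/n$. So no bootstrap based on repeatedly convolving multiplicatively can produce the gain $\eta(\sigma)$, and Step~4, which requires an $L^\infty$ (or $L^2$) bound on $\mu'$ itself, cannot be closed. The flattening statements that ``underpin Theorem~\ref{main}'' do not help here either, for two reasons: (i) they flatten objects like $(\mu\mm\mu)\ff(\nu\mm\nu)$ or $(\mu\ff\nu)\mm(\mu\ff\nu)$, i.e.\ \emph{additive} combinations of multiplicative convolutions, and there is no way to transport such an energy gain back to a density bound for $\mu_2\ff\cdots\ff\mu_n$; the only available transfer is at the level of a single Fourier coefficient, via Lemma~\ref{order}, and each application squares the Fourier transform -- iterating this across $\sim\sigma^{-1}$ stages is exactly what produces the $\exp(-C\sigma^{-1})$ in the statement; (ii) the flattening input of Theorem~\ref{main}, Lemma~\ref{lemma3}, uses an additive convolution power $\Pi^{\pp 2^k}$ with $k$ not bounded by an absolute constant, so iterating it $O(\sigma^{-1})$ times gives no quantitative exponent at all. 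The paper's proof replaces it by the single-convolution flattening Lemma~\ref{flattening} (gain $t/C$ per step), whose proof needs Balog--Szemer\'edi--Gowers together with the new quantitative projection estimate Theorem~\ref{thm:rescy}; it then builds $\Pi_k=(\Pi_{k-1}\ff\mu_k)\mm(\Pi_{k-1}\ff\mu_k)$, applies the $n=2$ base case to $\Pi_\ell\ff\Pi'_\ell$, and descends to $\mu_1\ff\cdots\ff\mu_{2\ell}$ by repeated use of Lemma~\ref{order}, handling the remaining factors by averaging over $\spt(\mu_{2\ell+1}\ff\cdots\ff\mu_n)$.

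Two secondary points. First, Step~1 is never used afterwards, and Step~4 has technical wrinkles: replacing $\mu'$ by $\mu'_\delta$ inside $\int\widehat{\mu_1}(\xi u)\,d\mu'(u)$ is not harmless, since $u\mapsto\widehat{\mu_1}(\xi u)$ oscillates at scale $\xi^{-1}\sim\delta$ (this is what the mollifier bookkeeping in Lemma~\ref{lemma4} is for), and the factor $(2^n\xi)^{\sigma/2}$ is not bounded for the arbitrarily large $n$ allowed by the theorem, so the excess factors must be split off as in the paper rather than absorbed into the main estimate. These could be repaired, but the loss of Step~3 is structural: without interleaving additive differences, the exponential-decay statement cannot be reached by this route.
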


Analogues of Theorem~\ref{main} and \ref{second} in the prime field setting were obtained by Bourgain in \cite{bourgain_primefield}, and our proofs follow a similar general strategy, based on sum-product estimates and flattening for additive-multiplicative convolutions of measures, although the details differ significantly. A small variant of Example~\ref{hi} above shows that there exist compact sets $A$ and $B$ in $\R$ such that the additive subgroup $\bracket{AB}$ generated by the product set $AB$ satisfies
\(
\Hd\bracket{AB} \leq \Hd A + \Hd B.
\)
Conversely, it was shown in~\cite{OrponenShmerkin23} as a consequence of the discretised radial projection theorem~\cite{osw_radialprojection} that for Borel sets $A,B \subset \R$, one has
\begin{equation}\label{2ab}
    \Hd(AB+AB - AB-AB) \geq \min\{\Hd A + \Hd B,1\}.
\end{equation}
The main ingredient in the proof of Theorem~\ref{main} is a discretised version of this inequality; the precise statement is given below as Proposition~\ref{prop:expansion} and is taken from \cite[Proposition~3.7]{OrponenShmerkin23}. It can be understood as a precise version of the discretised sum-product theorem, which allows us to improve on the strategy used by Bourgain in \cite{Bo2} and obtain Fourier decay of multiplicative convolutions under optimal energy conditions. Before turning to the detailed proof, let us give a general idea of the argument.

\subsection*{Notation}

We use $\lesssim$ to hide absolute multiplicative constants.

We fix for the rest of the article a standard, $L^1$-normalized approximate identity
\[
    \{P_{\delta}\}_{\delta > 0} = \{\delta^{-1}P(\cdot/\delta)\}_{\delta > 0}.
\]
We take $P$ to be radially decreasing and to satisfy $\mathbf{1}_{[-1/2,1/2]} \leq P \leq \mathbf{1}_{[-1,1]}$.

Given a measure $\mu$ on $\R$, recall that we write $\mu_\delta$ for the density of $\mu$ at scale $\delta$, or equivalently, $\mu_\delta=\mu*P_\delta$.

Below, we shall use both additive and multiplicative convolution of measures.
To avoid any confusion, we write $\mu\pp\nu$, $\mu\mm\nu$ and $\mu\ff\nu$ to denote the image of $\mu\times\nu$ under the maps $(x,y)\mapsto x+y$, $(x,y)\mapsto x-y$, and $(x,y)\mapsto xy$, respectively. Similarly, we denote additive and multiplicative $k$-convolution powers of measures by $\mu^{\pp k}$ and $\mu^{\ff k}$, respectively.

The push-forward of a Borel measure $\mu$ on the real line under a Borel map $g:\R\to\R$ is denoted $g_{\sharp}\mu$, that is,
\[
    \int f\,d(g_{\sharp}\mu) = \int f\circ g\,d\mu.
\]

\subsection*{Sketch of proof of Theorem~\ref{main}}

The $n=2$ case of Theorem~\ref{main} is classical and already appears in Bourgain's paper~\cite[Theorem~7]{Bo2}: Let $\mu$ and $\nu$ be two probability measures on $[-1,1]$ such that $\norm{\mu_\delta}_2^2\leq\delta^{-1+s}$ and $\norm{\nu_\delta}_2^2\leq\delta^{-1+t}$ (these can be seen as "single-scale" versions of $I_s^{\delta}(\mu)\lessapprox 1$, $I_t^{\delta}(\nu)\lessapprox 1$). Then the multiplicative convolution $\mu\ff\nu$ satisfies
\[
    \abs{\widehat{\mu\ff\nu}(\xi)}\lesssim\delta^{\frac{s+t-1}{2}},\quad \delta^{-1}\leq\abs{\xi}\leq 2\delta^{-1}.
\]
For the reader's convenience we record the detailed argument below, see Section~\ref{sec:n2}.

We want to use induction to reduce to this base case.
To explain the induction step, we focus on the case $n=3$.
The main point is to translate equation~\eqref{2ab} into a flattening statement for additive-multiplicative convolutions of measures.
For simplicity, assume we knew that if $\mu$ and $\nu$ are probability measures on $[-1,1]$, then the measure
\[
    \eta := (\mu\ff\nu)\pp(\mu\ff\nu) \mm (\mu\ff\nu)\mm(\mu\ff\nu)
\]
satisfies, for $\epsilon>0$ arbitrarily small,
\begin{equation}\label{ab_measures}
    \norm{\eta_\delta}_2^2 \leq \delta^{1-\epsilon} \norm{\mu_\delta}_2^2\norm{\nu_\delta}_2^2.
\end{equation}
(Note that this is a close analogue of \eqref{2ab} for $L^2$-dimensions of measures at scale $\delta$.)
If $\mu_1$, $\mu_2$ and $\mu_3$ satisfy $\norm{(\mu_i)_\delta}_{2}^2\leq\delta^{-1+s_i}$ for some parameters $s_i$ with $s_1+s_2+s_3>1$, we apply the above inequality to $\mu_1$ and $\mu_2$ to obtain
\[
    \norm{\eta_\delta}_2^2 \leq \delta^{-1-\epsilon+s_1+s_2},
\]
where $\eta=(\mu_1\ff\mu_2)\pp(\mu_1\ff\mu_2) \mm (\mu_1\ff\mu_2)\mm(\mu_1\ff\mu_2)$.
If $\epsilon$ is chosen small enough, we have $(s_1+s_2-\epsilon)+s_3>1$, and so we may apply the $n=2$ case to the measures $\eta$ and $\mu_3$ to get, for $\delta^{-1}<\abs{\xi}<2\delta^{-1}$,
\[
    \abs{\widehat{\eta\ff\mu_3}(\xi)}<\delta^{\frac{s_1+s_2+s_3-\epsilon-1}{2}}.
\]
To conclude, one observes from the Cauchy-Schwarz inequality that for any two probability measures $\mu$ and $\nu$, one always has  $\abs{\widehat{\mu\ff\nu}(\xi)}^2\leq \abs{\widehat{(\mu\mm\mu)\ff\nu}(\xi)}$. This elementary observation applied twice yields
\[
    \abs{(\mu_1\ff\mu_2\ff\mu_3)^{\wedge}(\xi)}^4 \leq \abs{\widehat{\eta\ff\mu_3}(\xi)}^4 < \delta^{\frac{1}{2}(s_1+s_2+s_3-\epsilon-1)}
\]
which is the desired Fourier decay, with parameter $\tau=\frac{1}{8}(s_1+s_2+s_3-\epsilon-1)$.

Unfortunately, as shown in Example \ref{ex:L2-not-enough} below, the assumptions on the $L^2$-norms of $\mu$ and $\nu$ are not sufficient to ensure inequality~\eqref{ab_measures} in general.
One also needs some kind of non-concentration condition on $\mu$ and $\nu$, and for that purpose we use the notion of energy of the measure at scale $\delta$, which gives information on the behaviour of the measure at all scales between $\delta$ and $1$.
The precise statement we use for the induction is given as Lemma~\ref{lemma3} below.
It is also worth noting that to obtain the correct bound on the energy, we need to use a large number $k$ of additive convolutions, whereas $k=4$ was sufficient in the analogous statement~\eqref{2ab} for Hausdorff dimension of sum-product sets.
We do not know whether Lemma~\ref{lemma3} holds for $k=4$, or even for $k$ bounded by some absolute constant.

\subsection*{Sketch of proof of Theorem~\ref{second}}
In the proof of Theorem~\ref{main}, the fact that the number $k$ of additive convolutions used in Lemma~\ref{lemma3} is not bounded by an absolute constant prevents us from getting an explicit lower bound for the decay exponent $\tau$ in Theorem~\ref{main}.
To overcome this issue and derive Theorem~\ref{second}, we use another flattening statement, Lemma~\ref{flattening} below.
This lemma uses only one additive convolution, and this gives better control on the Fourier decay exponent.
The drawback is that the increase in dimension is not as good as the one given by Lemma~\ref{lemma3}.
Roughly speaking, the measure $\eta$ will only satisfy $\dim \eta\geq\dim \mu + \frac{\dim\nu}{C}$ for some absolute constant $C$ instead of $\dim\eta\geq\dim\mu+\dim\nu-\epsilon$.
However, this is sufficient to get Fourier decay when the convolution product is long enough.
(The bound is not as good as the one in Theorem~\ref{main}, but comparable within an absolute multiplicative constant.)\\\
The proof of the flattening statement Lemma~\ref{flattening} relies on a new quantitative estimate for projections of discretised sets under weak non-concentration assumptions on the set of projections, Theorem~\ref{thm:rescy}, which may have independent interest.
Theorem~\ref{thm:rescy} is derived from a recent projection theorem of Ren and Wang~\cite{RenWang23}, using rescaling arguments.

\bigskip
We conclude this introduction by an example showing that for $n \geq 3$, the assumption $I_{s_{j}}^{\delta}(\mu_{j}) \leq\delta^{-\epsilon}$ in Theorems~\ref{main} and \ref{second} cannot be replaced by the "single-scale" $L^{2}$-bound $\|\mu_{\delta}\|_{2}^{2} \leq \delta^{s_{j} - 1-\epsilon}$.
This is mildly surprising, because the situation is opposite in the case $n = 2$, as shown by Proposition~\ref{basecase} below.
We only write down the details of the example in the case $n = 3$, but it is straightforward to generalise to $n \geq 3$.

\begin{example}  \label{ex:L2-not-enough}
    For every $s \in (0,\tfrac{1}{2})$ and $\delta_{0} > 0$ there exists a scale $\delta \in (0,\delta_{0}]$ and a Borel probability measure $\mu = \mu_{\delta,s}$ on $[1,2]$ with the following properties:
    \begin{itemize}
        \item $\|\mu\|_{2}^{2} \sim \delta^{s - 1}$.
        \item $|\widehat{\mu\ff\mu\ff\mu}(\delta^{-1})| \sim 1$.
    \end{itemize}
    The building block for the construction is the following.
    For $r \in 2^{-\N}$, and a suitable absolute constant $c > 0$, let $\mathcal{I} = \mathcal{I}_{r}$ be a family of $r^{-1}$ intervals of length $cr$, centred around the points $r\Z \cap [0,1]$. Then, if $c > 0$ is small enough, we have
    \begin{displaymath}
        \cos(2\pi x/r) \geq \tfrac{1}{2}, \qquad \forall x \in \cup \mathcal{I}.
    \end{displaymath}
    Consequently, if $\brho$ is any probability measure supported on $\cup \mathcal{I}$, then $|\widehat{\brho}(r^{-1})| \geq \tfrac{1}{2}$.

    Fix $s \in (0,\tfrac{1}{2})$, $\delta > 0$, and let $\rho$ be the uniform probability measure on the intervals $\mathcal{I}_{\delta^{s}}$.
    As we just discussed, $|\widehat{\rho}(\delta^{-s})| \sim 1$.
    Next, let $\mu = \mu_{\delta,s}$ be a rescaled copy of $\rho$ inside the interval $[1,1 + \delta^{1 - s}] \subset [1,2]$. More precisely, $\mu = \tau_{\sharp} \lambda_{\sharp} \rho$, where $\tau(x) = x + 1$ and $\lambda(x) = \delta^{1 - s}x$.
    Now $\mu$ is a uniform probability measure on a collection of $\delta^{-s}$ intervals of length $\delta$, and consequently $\|\mu\|_{2}^{2} \sim \delta^{s - 1}$.

    We next investigate the Fourier transform of $\mu\ff\mu\ff\mu$.
    Writing $\mu_0 := \lambda_{\sharp} \rho$, we have
    \begin{displaymath}
        \widehat{\mu\ff\mu\ff\mu}(\delta^{-1}) = \iiint e^{-2\pi i \delta^{-1}(x + 1)(y + 1)(z + 1)} \, d\mu_0(x) \, d\mu_0(y) \, d\mu_0(z).
    \end{displaymath}
    We expand
    \begin{displaymath} \delta^{-1}(x + 1)(y + 1)(z + 1) = \delta^{-1}xyz + \delta^{-1}(xy + xz + yz) + \delta^{-1}(x + y + z) + \delta^{-1}. \end{displaymath}
    Now the key point: since $x,y,z \in \spt \mu_0 \subset [0,\delta^{1 - s}]$, we have
    \begin{displaymath}
        |\delta^{-1} xyz| \leq \delta^{2 - 3s}\quad\text{and}\quad |\delta^{-1}(xy + xz + yz)| \lesssim \delta^{1 - 2s}.
    \end{displaymath}
    Since $s < \tfrac{1}{2}$, both exponents $2 - 3s$ and $1 - 2s$ are strictly positive and consequently,
    \begin{displaymath} e^{-2\pi i \delta^{-1}(x + 1)(y + 1)(z + 1)} = e^{-2\pi i \delta^{-1}(x + y + z + 1)} + o_{\delta \to 0}(1). \end{displaymath}

    Using this, and also that $\hat{\mu}_0(\xi) = \widehat{\rho}(\delta^{1 - s}\xi)$, we find
    \begin{align*}
        \widehat{\mu\ff\mu\ff\mu}(\delta^{-1}) & = e^{-2\pi i \delta^{-1}}\iiint e^{-2\pi i \delta^{-1}(x + y + z)} \, d\mu_0(x) \, d\mu_0(y) \, d\mu_0(z) + o_{\delta \to 0}(1) \\
                                               & = e^{-2\pi i \delta^{-1}} (\widehat{\rho}(\delta^{-s}))^{3} + o_{\delta \to 0}(1).
    \end{align*}
    In particular, $|\widehat{\mu\ff\mu\ff\mu}(\delta^{-1})| \sim 1$ for $\delta > 0$ sufficiently small.
\end{example}

If we allow $\mu$ to be supported on $[-1,1]$, as in Theorem~\ref{main}, an even simpler example is available, namely $\mu=\delta^{-1+s}\mathbf{1}_{[0,c\delta^{1-s}]}$, where $s<2/3$.
Then $\mu\ff\mu\ff\mu$ is supported on $[0,c^3\delta^{3-3s}]\subset [0,c\delta]$, so it satisfies $\abs{\widehat{\mu\ff\mu\ff\mu}(\delta^{-1})}\gtrsim 1$ if $c>0$ is chosen small enough.

\section{The base case \texorpdfstring{$n=2$}{n=2}}
\label{sec:n2}

In the $n=2$ case, Theorem~\ref{main} is proved by a direct elementary computation.
In fact, to obtain the desired Fourier decay, one only needs an assumption on the $L^2$-norms of the measures at scale $\delta$.

\begin{proposition}[Base case $n=2$]
    \label{basecase}
    Let $\delta \in (0,1]$ and let $\mu,\nu$ be Borel probability measures on $[-1,1]$ and $s,t \in [0,1]$ such that
    \[
        \norm{\mu_\delta}_2^2\leq\delta^{-1+s}
        \quad\mbox{and}\quad
        \norm{\nu_\delta}_2^2\leq\delta^{-1+t}.
    \]
    Then, for all $\xi$ with $\delta^{-1} \leq |\xi| \leq 2\delta^{-1}$,
    \[
        \left| \iint e^{-2\pi i \xi \cdot xy} \, d\mu(x) \, d\nu(y) \right|
        \lesssim \delta^{\frac{s+t-1}{2}}.
    \]
\end{proposition}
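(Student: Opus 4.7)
Plan. The proof is a Cauchy--Schwarz plus Plancherel computation on the smoothed densities $\mu_\delta,\nu_\delta$, followed by a transfer back to $\mu,\nu$. First, by Fubini,
\[
    \tilde I(\xi) := \widehat{\mu_\delta \ff \nu_\delta}(\xi) = \int \hat{\nu_\delta}(\xi x)\,\mu_\delta(x)\,dx.
\]
A single Cauchy--Schwarz in $L^2(dx)$, followed by the change of variables $u = \xi x$ and Plancherel, gives
\[
    |\tilde I(\xi)|^2 \leq \|\mu_\delta\|_2^2 \int |\hat{\nu_\delta}(\xi x)|^2\,dx = \frac{\|\mu_\delta\|_2^2\,\|\nu_\delta\|_2^2}{|\xi|} \leq \delta \cdot \delta^{-1+s} \cdot \delta^{-1+t} = \delta^{s+t-1},
\]
using the hypotheses and $|\xi| \geq \delta^{-1}$, so $|\tilde I(\xi)| \lesssim \delta^{(s+t-1)/2}$.

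To pass from $\tilde I$ back to $I$, I would expand $\mu_\delta = \mu \ast P_\delta$, $\nu_\delta = \nu \ast P_\delta$, change variables $x = a+u$, $y = b+v$, and use $(a+u)(b+v) = ab + av + bu + uv$ together with $|\xi uv| \leq 2\delta$ (since $|u|,|v|\leq\delta$ and $|\xi|\leq 2\delta^{-1}$) to obtain
\[
    \tilde I(\xi) = \iint e^{-2\pi i \xi ab}\,\hat P(\delta\xi a)\,\hat P(\delta\xi b)\,d\mu(a)\,d\nu(b) + O(\delta).
\]
For $a,b \in [-1,1]$ and $\delta\xi \in [1,2]$, the arguments $\delta\xi a,\delta\xi b$ lie in $[-2,2]$. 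If the approximate identity $P$ is chosen so that $|\hat P|$ is bounded below by a positive constant on $[-2,2]$, the weights $\hat P(\delta\xi a)\hat P(\delta\xi b)$ are comparable to constants on the supports of $\mu,\nu$, and hence $|I(\xi)| \lesssim |\tilde I(\xi)| + O(\delta) \lesssim \delta^{(s+t-1)/2}$.

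The main obstacle is this transfer step: for the bump $P$ only assumed to satisfy $\mathbf{1}_{[-1/2,1/2]} \leq P \leq \mathbf{1}_{[-1,1]}$, $\hat P$ may vanish inside $[-2,2]$, and then the weights cannot simply be absorbed into constants. The natural workaround is to rerun the Cauchy--Schwarz plus Plancherel argument directly with the complex measures $\hat P(\delta\xi \cdot)\,d\mu$ and $\hat P(\delta\xi \cdot)\,d\nu$ in place of $\mu_\delta,\nu_\delta$, after verifying that their $L^2$ smoothings satisfy $L^2$ bounds of the same form (up to absolute constants) as those on $\mu_\delta, \nu_\delta$. The core Cauchy--Schwarz and Plancherel estimate in the first step is otherwise entirely standard.
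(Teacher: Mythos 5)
Your first step is fine: Cauchy--Schwarz in $L^2(dx)$ plus Plancherel does give $|\widehat{\mu_\delta\ff\nu_\delta}(\xi)|\le\big(\|\mu_\delta\|_2^2\,\|\nu_\delta\|_2^2/|\xi|\big)^{1/2}\lesssim\delta^{(s+t-1)/2}$. The genuine gap is the transfer back to $\mu\ff\nu$. The asserted implication ``the weights $\hat P(\delta\xi a)\hat P(\delta\xi b)$ are comparable to constants, hence $|I(\xi)|\lesssim|\tilde I(\xi)|+O(\delta)$'' is logically invalid: even if the weight were bounded above and below in absolute value, smallness of the \emph{weighted} oscillatory integral $\iint e^{-2\pi i\xi ab}\,\hat P(\delta\xi a)\hat P(\delta\xi b)\,d\mu(a)\,d\nu(b)$ says nothing about the unweighted one, because a non-constant weight can create (or destroy) cancellation; pointwise comparability only transfers bounds for nonnegative integrands. (For the paper's normalisation $\mathbf{1}_{[-1/2,1/2]}\le P\le\mathbf{1}_{[-1,1]}$, $\hat P$ indeed vanishes inside $[-2,2]$, but fixing that does not fix the logic.) Nor is this a removable technicality: at $|\xi|\sim\delta^{-1}$, moving $x$ by $\delta$ changes the phase $\xi xy$ by an amount of order $|y|$, which is order one, so mollifying at scale $\delta$ genuinely alters the bilinear integral, and $\widehat{\mu\ff\nu}(\xi)$ cannot be recovered from $\widehat{\mu_\delta\ff\nu_\delta}(\xi)$ by a pointwise comparison. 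Your proposed workaround does not close the gap either: the measures $\hat P(\delta\xi\cdot)\,d\mu$ and $\hat P(\delta\xi\cdot)\,d\nu$ are still singular, so the Cauchy--Schwarz/Plancherel computation cannot be ``rerun'' for them as such, and smoothing them again only produces yet another weighted, mollified quantity rather than $\widehat{\mu\ff\nu}(\xi)$; the argument is circular.

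The paper's proof (Lemma~\ref{lemma4}) avoids mollifying inside the oscillatory integral altogether. One inserts a fixed cutoff $\varphi\in C_c^\infty$ with $\mathbf{1}_{[-1,1]}\le\varphi\le\mathbf{1}_{[-2,2]}$ and $\widehat\varphi\ge 0$, writes
\begin{displaymath}
    \iint e^{-2\pi i\xi xy}\,d\mu(x)\,d\nu(y)=\int\widehat{\varphi\mu}(y\xi)\,d\nu(y),
    \qquad \widehat{\varphi\mu}=\widehat\varphi*\hat\mu,
\end{displaymath}
splits the $\hat\mu$-frequencies at $2\delta^{-1}$, handles the tail by the decay of $\widehat\varphi$, and applies Cauchy--Schwarz and Plancherel to the low-frequency part; the hypotheses enter only via $\int_{|x|\le2\delta^{-1}}|\hat\mu(x)|^2\,dx\lesssim\|\mu_\delta\|_2^2$ and its analogue for $\nu$. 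If you wish to stay closer to your computation, one honest repair is to take $P$ with $\hat P>0$ on $[-2,2]$ and expand the reciprocal weight $1/\hat P(\delta\xi\cdot)$ in a rapidly convergent Fourier series; the resulting linear modulations correspond to translates of $\mu,\nu$, whose $\delta$-smoothings satisfy the same $L^2$ bounds. But that extra step is exactly the missing content, and as written your proof does not contain it.
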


\begin{remark}
    If $\mu$ and $\nu$ are equal to the normalized Lebesgue measure on balls of size $\delta^{1-s}$ and $\delta^{1-t}$, respectively, the assumptions of the proposition are satisfied.
    In that case, the multiplicative convolution $\mu\ff\nu$ is supported on a ball of size $\delta^{1-\max(s,t)}$, so the Fourier decay cannot hold for $\abs{\xi}\leq\delta^{-1+\max(s,t)}$.
\end{remark}

The above proposition is an easy consequence of the lemma below, which is essentially \cite[Theorem 7]{Bo2}, except that we keep slightly more careful track of the constants. We include the proof for completeness.

\begin{lemma}\label{lemma4}
    Let $\delta \in (0,1]$, and let $\mu,\nu$ be Borel probability measures on $[-1,1]$ with
    \begin{displaymath} A := \int_{|\xi| \leq 2\delta^{-1}} |\hat{\mu}(\xi)|^{2} \, d\xi \quad \text{and} \quad B := \int_{|\xi| \leq 2\delta^{-1}} |\hat{\nu}(\xi)|^{2} \, d\xi. \end{displaymath}
    Then, for all $\xi$ with $1\leq\abs{\xi}\leq\delta^{-1}$,
    \begin{equation}\label{form3}
        \left| \iint e^{-2\pi i \xi \cdot xy} \, d\mu(x) \, d\nu(y) \right| \lesssim \sqrt{AB/|\xi|} + \delta.
    \end{equation}
\end{lemma}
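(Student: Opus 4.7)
The plan is to establish the claimed bound by reducing, via the Fubini identity and a change of variables, to a one-dimensional Cauchy--Schwarz estimate in the frequency variable. Since the measures $\mu,\nu$ may have no density, one first regularizes them at scale $\delta$ and then controls the resulting error.

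Write, by Fubini,
\[
    I := \iint e^{-2\pi i \xi xy}\,d\mu(x)\,d\nu(y) = \int \hat{\nu}(\xi x)\,d\mu(x) = \int \hat{\mu}(\xi y)\,d\nu(y).
\]
Introduce the regularizations $\mu_\delta = \mu * P_\delta$ and $\nu_\delta = \nu * P_\delta$, and set
\[
    I_\delta := \iint e^{-2\pi i \xi xy}\,\mu_\delta(x)\,\nu_\delta(y)\,dx\,dy = \int \hat{\nu_\delta}(\xi x)\,\mu_\delta(x)\,dx.
\]
The first (main) step is to bound $|I_\delta|$. Substituting $u=\xi x$,
\[
    I_\delta = \tfrac{1}{|\xi|}\int \hat{\nu_\delta}(u)\,\mu_\delta(u/\xi)\,du,
\]
and since $\mu_\delta$ is supported in $[-1-\delta,1+\delta]$, the integrand vanishes for $|u|>(1+\delta)|\xi|\leq 2\delta^{-1}$. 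Cauchy--Schwarz then gives
\[
    |I_\delta| \leq \tfrac{1}{|\xi|}\Bigl(\int_{|u|\leq 2\delta^{-1}} |\hat{\nu_\delta}(u)|^{2}\,du\Bigr)^{1/2}\Bigl(\int |\mu_\delta(u/\xi)|^{2}\,du\Bigr)^{1/2}.
\]
Using $|\hat{P_\delta}|\leq 1$ on the first factor yields an upper bound of $\sqrt{B}$; and $\int|\mu_\delta(u/\xi)|^{2}\,du = |\xi|\,\|\mu_\delta\|_2^{2}$, where by Plancherel $\|\mu_\delta\|_2^{2} = \int|\hat\mu|^{2}|\hat{P_\delta}|^{2}\lesssim A$ (the tail $|u|>2\delta^{-1}$ being negligible thanks to the fast decay of $\hat{P_\delta}$ together with $|\hat\mu|\leq 1$). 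Combining,
\[
    |I_\delta| \lesssim \sqrt{AB/|\xi|}.
\]

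The second step is to estimate the discrepancy $I-I_\delta$. Writing $\mu = \mu_\delta\,dx + (\mu-\mu_\delta\,dx)$ and similarly for $\nu$, one obtains, after a short computation,
\[
    I - I_\delta = \int \hat{\nu}(\xi x)\,(1-\hat{P_\delta}(\xi x))\,d\mu(x) + \int \hat{\mu}(\xi y)\,\nu_\delta(y)\,(1-\hat{P_\delta}(\xi y))\,dy.
\]
Since $P$ is even with $\int P=1$, one has $\hat{P}(0)=1$ and $\hat{P}'(0)=0$, so $|1-\hat{P_\delta}(\xi z)|=|1-\hat{P}(\delta\xi z)|\lesssim (\delta\xi z)^{2}$ for $|\delta\xi z|\leq 1$. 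The integrands are supported in $|z|\leq 1+\delta$, where $|\delta\xi z|\leq 2$, so the two error integrals are each bounded by a constant times $\delta^{2}\xi^{2}\cdot \int z^{2}\,d\mu(z)$ or the analogous $\nu$-integral, giving $|I-I_\delta|\lesssim \delta^{2}\xi^{2}$. This is $\lesssim \delta$ whenever $|\xi|\leq \delta^{-1/2}$; for $\delta^{-1/2}<|\xi|\leq \delta^{-1}$ one instead bundles the error into the main term, observing that the universal lower bound $A,B\gtrsim 1$ (coming from continuity of $\hat\mu,\hat\nu$ near the origin for probability measures on $[-1,1]$) makes $\sqrt{AB/|\xi|}\gtrsim \sqrt{\delta}\gg\delta$, so the $\delta^{2}\xi^{2}$ error is controlled by the main term.

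The main obstacle is precisely this error bookkeeping: the pointwise estimate on $1-\hat{P_\delta}(\xi z)$ only yields $\delta^{2}\xi^{2}$, which by itself is not $O(\delta)$ at the largest frequencies $|\xi|\sim \delta^{-1}$. The rescue is the two-regime comparison with the main term described above, together with the fact that one is free to choose the even bump $P$ with $\hat{P}'(0)=0$. Once these details are carried out, combining the main estimate for $|I_\delta|$ and the error estimate for $|I-I_\delta|$ produces exactly the claimed inequality $|I|\lesssim \sqrt{AB/|\xi|}+\delta$.
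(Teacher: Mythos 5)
Your strategy (mollify both measures at scale $\delta$, change variables, and apply Cauchy--Schwarz in the frequency variable) is different from the paper's, but as written it has a genuine gap in the error control, precisely in the regime that matters. Your bound on the discrepancy is $|I-I_\delta|\lesssim\delta^{2}\xi^{2}$, and for $\delta^{-1/2}<|\xi|\leq\delta^{-1}$ you propose to absorb this into the main term using only $A,B\gtrsim 1$, i.e.\ $\sqrt{AB/|\xi|}\gtrsim|\xi|^{-1/2}\geq\sqrt{\delta}$. But $\delta^{2}\xi^{2}\lesssim|\xi|^{-1/2}$ only holds for $|\xi|\lesssim\delta^{-4/5}$; at $|\xi|\sim\delta^{-1}$ your error bound is of order $1$, while the right-hand side of \eqref{form3} can be as small as $\sim\sqrt{\delta}$ (e.g.\ $\mu=\nu=$ normalized Lebesgue measure on $[-1,1]$, where $A,B\sim1$). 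This is not a bookkeeping slip: when $\delta|\xi|\sim1$ the factor $1-\widehat{P_\delta}(\xi z)=1-\widehat{P}(\delta\xi z)$ is genuinely of size $\sim1$ for typical $z\in\spt\mu$, so the mollification error carries no smallness at all, and bounding $\int\hat{\nu}(\xi x)(1-\widehat{P_\delta}(\xi x))\,d\mu(x)$ would require essentially the same oscillation estimate you are trying to prove. The application in Proposition~\ref{basecase} uses the lemma exactly at frequencies comparable to the inverse scale, so this regime cannot be discarded. The paper's proof sidesteps the issue by not mollifying at all: one multiplies $\mu$ by a smooth cutoff $\varphi$ with $\varphi\equiv1$ on $\spt\mu$, so that $\widehat{\varphi\mu}=\hat{\varphi}\ast\hat{\mu}$ holds \emph{exactly} and no approximation error arises; the only "tail" is the region $|x|\geq2\delta^{-1}$, handled by the decay of $\hat{\varphi}$, which is what produces the $+\delta$.

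A secondary gap: in the main term you need $\|\mu_\delta\|_2^{2}\lesssim A$, and you justify the tail $\int_{|u|>2\delta^{-1}}|\hat{\mu}(u)|^{2}|\widehat{P_\delta}(u)|^{2}\,du$ by "fast decay of $\widehat{P_\delta}$ together with $|\hat{\mu}|\leq1$". That only gives $\delta^{-1}\int_{|t|>2}|\hat{P}(t)|^{2}\,dt\sim\delta^{-1}$, which is not $\lesssim A$ when $A\sim1$. The inequality $\|\mu_\delta\|_2^{2}\lesssim\int_{|u|\leq2\delta^{-1}}|\hat{\mu}(u)|^{2}\,du$ is in fact true, but its proof goes through the spatial side rather than a pointwise tail bound: $\|\mu_\delta\|_2^{2}\lesssim\delta^{-1}(\mu\times\mu)\{|x-y|\leq2\delta\}$, while testing $|\hat{\mu}|^{2}$ against $\hat{F}(\delta\,\cdot)$ for a Fej\'er-type kernel $F\geq0$ with $\hat{F}\geq0$ compactly supported and $F\gtrsim1$ on $[-2,2]$ shows $A\gtrsim\delta^{-1}(\mu\times\mu)\{|x-y|\leq2\delta\}$. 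With that fix the main term is fine; the error term, however, needs a different idea, e.g.\ replacing the mollifier by the paper's cutoff device.
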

\begin{proof}
    Let $\varphi \in C^{\infty}_{c}(\R)$ be an auxiliary function with the properties $\mathbf{1}_{[-1,1]} \leq \varphi \leq \mathbf{1}_{[-2,2]}$ (thus $\varphi \equiv 1$ on $\spt \mu$) and $\widehat{\varphi} \geq 0$.
    Fixing $1 \leq |\xi| \leq \delta^{-1}$, the left-hand side of \eqref{form3} can be estimated by
    \begin{align*}
        \left| \iint e^{-2\pi i \xi \cdot xy} \, d\mu(x) \, d\nu(y) \right|
         & = \left| \int \widehat{\varphi \mu}(y\xi) \, d\nu(y)\right| \leq \iint \widehat{\varphi}(x - y \xi) |\hat{\mu}(x)| \, dx \, d\nu(y) \\
         & = \int |\hat{\mu}(x)| \left( \int \widehat{\varphi}(x - y \xi) \, d\nu(y) \right) \, dx.
    \end{align*}
    We split the right-hand side as the sum
    \begin{displaymath} \int_{|x| \leq 2\delta^{-1}} |\hat{\mu}(x)|  \left( \int \widehat{\varphi}(x - y\xi) \, d\nu(y) \right) \, dx + \iint_{|x| \geq 2\delta^{-1}} |\hat{\mu}(x)| \widehat{\varphi}(x - y\xi) \, dx \, d\nu(y) =: I_{1} + I_{2}. \end{displaymath}
    For the term $I_{2}$, we use that $\widehat{\varphi}(x - y\xi) \lesssim |x - y\xi|^{-2}$, $|\hat{\mu}(x)| \leq 1$, and $\nu(\R) = 1$:
    \begin{displaymath}
        I_{2} \lesssim \max_{y \in [-1,1]} \int_{|x| \geq 2\delta^{-1}} \frac{dx}{|x - y\xi|^{2}} \lesssim \int_{|x| \geq \delta^{-1}} \frac{dx}{|x|^{2}} \lesssim \delta.
    \end{displaymath}
    For the term $I_{1}$, we first use the Cauchy-Schwarz inequality and the definition of $A$ to deduce
    \begin{displaymath}
        I_{1} \leq \sqrt{A}\left( \int \left[ \int \widehat{\varphi}(x - y\xi) \, d\nu(y) \right]^{2} \, dx \right)^{1/2}.
    \end{displaymath}
    Finally, for the remaining factor, assume $\xi > 0$ without loss of generality, and write $\widehat{\varphi}(x - y\xi) = \widehat{\varphi_{\xi}}(x/\xi - y)$, where $\varphi_{\xi} = \xi^{-1}\varphi(\cdot/\xi)$. With this notation, and by Plancherel's formula,
    \begin{align*}
        \int \left[ \int \widehat{\varphi}(x - y\xi) \, d\nu(y) \right]^{2} \, dx
         & = \int (\widehat{\varphi_{\xi}} \ast \nu)(x/\xi)^{2} \, dx
        = \xi \int (\widehat{\varphi_{\xi}} \ast \nu)(z)^{2} \, dz    \\
         & = \xi \int \varphi_{\xi}(u)^{2}|\hat{\nu}(u)|^{2} \, du
        \lesssim \xi^{-1} \int_{\spt \varphi_{\xi}} |\hat{\nu}(u)|^{2} \, du.
    \end{align*}
    Finally, recall that $\spt \varphi \subset [-2,2]$, so $\spt \varphi_{\xi} \subset [-2\xi,2\xi] \subset [-2\delta^{-1},2\delta^{-1}]$.
    This shows that $I_{1} \lesssim \sqrt{AB/\xi}$, and the proof of \eqref{form3} is complete.
\end{proof}

\begin{proof}[Proof of Proposition~\ref{basecase}]
    Observe that by Plancherel's formula
    \[
        A = \int_{\abs{\xi}\leq 4\delta^{-1}} \abs{\hat{\mu}(\xi)}^2\,d\xi
        \leq \norm{\mu_{\frac{\delta}{10}}}_2^2
        \lesssim \norm{\mu_\delta}_2^2
        \leq \delta^{-1+s}
    \]
    and similarly
    \[
        B = \int_{\abs{\xi}\leq 4\delta^{-1}} \abs{\hat{\mu}(\xi)}^2\,d\xi
        \lesssim \delta^{-1+t}.
    \]
    So Lemma~\ref{lemma3} applied at scale $\delta/2$ implies that for $\delta^{-1}\leq\abs{\xi}\leq 2\delta^{-1}$,
    \begin{align*}
        \left| \iint e^{-2\pi i \xi \cdot xy} \, d\mu(x) \, d\nu(y) \right|
         & \lesssim \sqrt{AB/|\xi|} + \delta  \\
         & \lesssim \delta^{\frac{s+t-1}{2}}.
    \end{align*}
\end{proof}

\section{Dimension and energy of additive-multiplicative convolutions}

This section is the central part of the proof of Theorem~\ref{main}.
Its goal is to derive Lemma~\ref{lemma3} below, whose statement can be qualitatively understood in the following way: If $\mu$ and $\nu$ are two Borel probability measures on $\R$ with respective dimensions $s$ and $t$, then there exists some additive convolution of $\mu\ff\nu$ with dimension at least $s+t-\epsilon$, where $\epsilon>0$ can be arbitrarily small.
The precise formulation in terms of the energies of the measures at scale $\delta$ will be essential in our proof of Fourier decay for multiplicative convolutions.

\begin{lemma}\label{lemma3}
    For all $s,t \in (0,1]$ with $s + t \leq 1$, and for all $\kappa > 0$, there exist $\epsilon = \epsilon(s,t,\kappa) > 0$, $\delta_{0} = \delta_{0}(s,t,\kappa,\epsilon) > 0$, and $k_{0} = k_{0}(s,t,\kappa) \in \N$ such that the following holds for all $\delta \in (0,\delta_{0}]$ and $k \geq k_{0}$. Let $\mu,\nu$ be Borel probability measures on $[-1,1]$ satisfying
    \begin{equation}\label{form5} I^{\delta}_{s}(\mu) \leq \delta^{-\epsilon} \quad \text{and} \quad I^{\delta}_{t}(\nu) \leq \delta^{-\epsilon}. \end{equation}
    Then, with $\Pi := (\mu\mm\mu)\ff(\nu\mm\nu)$, we have
    \begin{displaymath}
        I^{\delta}_{s + t}(\Pi^{\pp k}) \leq \delta^{-\kappa}.
    \end{displaymath}
    Moreover, the value of $k_{0}$ stays bounded as long as $\min\{s,t\} > 0$ stays bounded away from zero.
\end{lemma}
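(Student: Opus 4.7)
The plan is to prove Lemma~\ref{lemma3} by a multi-scale application of Proposition~\ref{prop:expansion}. The target inequality $I^\delta_{s+t}(\Pi^{\pp k})\leq\delta^{-\kappa}$ is a multi-scale quantity, whereas Proposition~\ref{prop:expansion} is a single-scale discretised version of~\eqref{2ab}; my strategy is to invoke the proposition separately at every dyadic scale $r\in[\delta,1]$ and sum the resulting flattening estimates.

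First, I would recast both the hypothesis and the conclusion in single-scale form. Using $|x-y|^{-s}=s\int_{|x-y|}^\infty r^{-s-1}\,dr$ together with the elementary identity
\[
    \iint\mathbf{1}_{|x-y|\leq r}\,d\rho_\delta(x)\,d\rho_\delta(y)\sim r\,\|\rho_r\|_2^2,\qquad r\geq\delta,
\]
one obtains the dyadic equivalence
\[
    I^\delta_s(\rho)\sim \sum_{r\in 2^{-\N}\cap[\delta,1]} r^{1-s}\|\rho_r\|_2^2.
\]
In one direction, the hypothesis $I^\delta_s(\mu)\leq\delta^{-\epsilon}$ gives the single-scale bound $\|\mu_r\|_2^2\leq r^{-1+s}\delta^{-\epsilon}$ at every dyadic $r\in[\delta,1]$, and similarly for $\nu$ at exponent $t$. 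In the other direction, in order to deduce $I^\delta_{s+t}(\Pi^{\pp k})\leq\delta^{-\kappa}$ it suffices to establish the single-scale flattening
\[
    \|\Pi^{\pp k}_r\|_2^2\leq r^{-1+(s+t)}\cdot\delta^{-\kappa/2}
\]
at every dyadic $r\in[\delta,1]$, since the dyadic sum above then contributes at most $\log(\delta^{-1})\,\delta^{-\kappa/2}\leq\delta^{-\kappa}$.

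Second, I would apply Proposition~\ref{prop:expansion}---the discretised analogue of~\eqref{2ab}---at each dyadic scale $r\in[\delta,1]$, feeding it the non-concentration information on $\mu$ and $\nu$ at scales $[\delta,r]$ that is inherited from the energy hypothesis by the dyadic decomposition above. With $k=k_0(s,t,\kappa)$ taken large enough and $\epsilon=\epsilon(s,t,\kappa)$ small enough, the proposition should yield the desired $L^2$-flattening $\|\Pi^{\pp k}_r\|_2^2\leq r^{-1+(s+t)}\delta^{-\kappa/2}$ at scale $r$, and the reduction of the previous paragraph then closes the argument.

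The \emph{main obstacle} is the correct scale-by-scale invocation of Proposition~\ref{prop:expansion}. This proposition is stated in~\cite{OrponenShmerkin23} as a discretised entropy-type bound for sum-product combinations of $\delta$-separated sets, so each application requires pigeonholing $\mu_r,\nu_r$ onto appropriate Frostman skeletons at scale $r$, running the set-theoretic argument there, and translating the resulting covering-number bound back into the $L^2$-flattening bound for $\Pi^{\pp k}_r$. One must also verify throughout that the loss $\delta^{-\epsilon}$ inherited from the initial energy hypothesis does not swamp the sum-product gain when read at scale $r$, which forces $\epsilon$ to be chosen very small in terms of $(s,t,\kappa)$. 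The necessity of taking $k$ possibly large---rather than the $k=4$ sufficient for~\eqref{2ab}---reflects that each additional additive convolution absorbs a little of the polynomial loss per scale; the resulting $k_0$ is forced to grow as $\min\{s,t\}\to 0$, matching the final remark of the lemma's statement.
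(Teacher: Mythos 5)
Your first reduction (dyadic equivalence of the energy with the sums $\sum_r r^{1-s}\norm{\rho_r}_2^2$, so that it suffices to prove $\norm{\Pi^{\pp k}_r}_2^2\lessapprox \delta^{-\kappa/2}r^{s+t-1}$ at every scale $r\in[\delta,1]$) is correct and is exactly how the paper begins, cf.\ \eqref{form16}. But the second step contains a genuine gap: Proposition~\ref{prop:expansion} does not ``yield the desired $L^2$-flattening'' when invoked at scale $r$. That proposition (and its consequence, Lemma~\ref{lemma2}) only gives a \emph{covering-number lower bound} $\abs{E}_r\geq r^{-s-t+\kappa}$ for sets $E$ carrying $\Pi^{\pp k}$-measure at least $r^{\epsilon}$; it says nothing directly about $\norm{\Pi_r^{\pp k}}_2$. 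The passage from that set-level statement to the $L^2$ bound is not a routine ``translation'' one can flag as an obstacle and skip --- it is the core of the proof, and it is a Bourgain--Gamburd type argument: one pigeonholes, among $k\lesssim 1/\epsilon$ doublings of the convolution power, an index $k$ at which the $L^2$ norm fails to drop, $\norm{\Pi_r^{\pp 2^{k+1}}}_2\geq r^{\epsilon}\norm{\Pi_r^{\pp 2^k}}_2$ (this is \eqref{form17}, and is where the largeness of $k$ actually enters --- not through ``each convolution absorbing a little loss per scale''); one then decomposes $\Pi_r^{\pp 2^k}$ into dyadic level sets, pigeonholes a level set $A$ dominating the convolution, and uses Plancherel and Cauchy--Schwarz to show that $A$ simultaneously has large measure under $\Pi^{\pp 2^k}$ (after a $4r$-thickening) and small covering number $\abs{A}_r$ relative to $\norm{\Pi_r^{\pp 2^k}}_2$. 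Only at that point is Lemma~\ref{lemma2} applied, to the set $A$, to force $\abs{A}_r\geq r^{-s-t+\kappa}$ and hence the claimed $L^2$ bound. None of this mechanism appears in your proposal, so as written the argument does not close.

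Two further points you would need to handle even after supplying that mechanism. First, the pigeonholed convolution index $k$ depends on the scale $r$; the paper reconciles this with the fixed $k$ in the statement by observing that $\norm{\Pi_r^{\pp 2^k}}_2$ is nonincreasing in $k$ (Young's inequality), so the maximal $k$ over all scales works uniformly --- your fixed-$k$, all-scales formulation silently assumes this. Second, the loss $\delta^{-\epsilon}$ read at scale $r$ is only of the form $r^{-O(\epsilon/\kappa)}$ when $r\leq\delta^{\kappa/2}$; larger scales must be disposed of by the trivial bound $\norm{\Pi_r^{\pp k}}_2\lesssim r^{-1}\leq\delta^{-\kappa/2}r^{(s+t-1)/2}$ (using $s+t\leq 1$), and the non-concentration needed for the application at scale $r$ concerns scales in $[r,1]$ (via $I_s^{4r}(\mu)\lesssim I_s^{\delta}(\mu)$ and Lemma~\ref{frostman-energy}), not the range $[\delta,r]$ you indicate.
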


The main component of the proof of Lemma \ref{lemma3} will be a combinatorial result from \cite{OrponenShmerkin23} (itself based on the main resul of \cite{osw_radialprojection}), which we will apply in the following form. Let $\abs{E}_\delta$ denote the smallest number of $\delta$-balls required to cover $E$.

\begin{lemma}
    \label{lemma2}
    For all $s,t \in (0,1]$ with $s + t \leq 1$, and for all $\kappa > 0$, there exist $\epsilon = \epsilon(s,t,\kappa) > 0$ and $\delta_{0} = \delta_{0}(s,t,\kappa,\epsilon) > 0$ such that the following holds for all $\delta \in (0,\delta_{0}]$ and $k \geq 2$.
    Let $\mu,\nu$ be Borel probability measures on $[-1,1]$ satisfying
    \begin{equation}\label{form20}
        I_{s}^{\delta}(\mu) \leq \delta^{-\epsilon}
        \quad \text{and} \quad
        I_{t}^{\delta}(\nu) \leq \delta^{-\epsilon}.
    \end{equation}
    Let $\Pi := (\mu \mm \mu) \ff (\nu \mm \nu)$ and assume that $E\subset\R$ is a set with $\Pi^{\pp k}(E) \geq \delta^{\epsilon}$.
    Then,
    \begin{equation}\label{form13}
        \abs{E}_\delta \geq \delta^{-s - t + \kappa}.
    \end{equation}
\end{lemma}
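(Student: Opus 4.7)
The plan is to reduce Lemma~\ref{lemma2} to a single application of Proposition~\ref{prop:expansion}, the discretised version of \eqref{2ab} established in \cite[Proposition~3.7]{OrponenShmerkin23}.

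First, by Fubini,
\[
    \delta^{\epsilon} \le \Pi^{\pp k}(E) = \int \Pi(E-z)\,d\Pi^{\pp(k-1)}(z),
\]
so there exists $z^{\ast} \in \R$ with $\Pi(E - z^{\ast}) \ge \delta^{\epsilon}$. Since $\delta$-covering numbers are translation-invariant, this reduces matters to the case $k=1$: it suffices to prove that $\Pi(F) \ge \delta^{\epsilon}$ implies $|F|_{\delta} \ge \delta^{-s-t+\kappa}$ for any Borel set $F \subset \R$, under the energy hypotheses \eqref{form20}.

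Next, I would use the energy bounds \eqref{form20} to pass to Frostman-type restrictions of $\mu$ and $\nu$. By a standard Chebyshev/dyadic-pigeonhole argument applied to the energy integrals, one obtains Borel sets $A, B \subset [-1,1]$ with $\mu(A), \nu(B) \ge \tfrac{1}{2}$, on which $\mu$ (respectively $\nu$) satisfies non-concentration estimates of the form $\mu(B(x,r)) \lesssim \delta^{-C\epsilon}\, r^{s - C\epsilon}$ for $x \in A, r \in [\delta,1]$, and analogously for $\nu$ on $B$, with an absolute constant $C$. Setting $\bar{\mu} := 2(\mu|_A)$ and $\bar{\nu} := 2(\nu|_B)$, the measure $\bar{\Pi} := (\bar{\mu}\mm\bar{\mu})\ff(\bar{\nu}\mm\bar{\nu})$ satisfies $\bar{\Pi}(F) \gtrsim \delta^{\epsilon}$ and is supported in $(A - A)\cdot(B - B) \subset AB + AB - AB - AB$, precisely the type of set controlled by \eqref{2ab}.

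Finally, Proposition~\ref{prop:expansion} applied with exponents $s - C\epsilon$ and $t - C\epsilon$, and error parameter $\kappa/2$, yields $|F|_{\delta} \ge \delta^{-s-t+\kappa}$, provided $\epsilon$ is chosen small enough in terms of $s,t,\kappa$ and $\delta_{0}$ accordingly. The main obstacle is this last step: one must verify that Proposition~\ref{prop:expansion} can be invoked in this measure-theoretic form (a purely set-theoretic version would require a further pigeonholing to replace $\mu|_A, \nu|_B$ by $\delta$-separated finite subsets carrying most of the mass) and then carefully match the various $\epsilon$-losses at each step so that the final exponent meets the $-s-t+\kappa$ bound. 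The hypothesis $s+t \le 1$ ensures we remain below the saturation regime of \eqref{2ab}, and the role of $k \ge 2$ is merely cosmetic once the Fubini reduction has been made.
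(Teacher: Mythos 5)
There is a genuine gap, and it is located exactly where you defer the work: your Fubini step reduces to $k=1$, i.e.\ to showing that $\Pi(F)\ge\delta^{\epsilon}$ forces $|F|_{\delta}\ge\delta^{-s-t+\kappa}$, but Proposition~\ref{prop:expansion} cannot be invoked for the single convolution $\Pi=(\mu\mm\mu)\ff(\nu\mm\nu)$. The conclusion \eqref{eq:robust-sum-prod} of that proposition is about $\delta$-covering numbers of \emph{affine} images $\{(b_{1}-b_{2})a+(a_{1}-a_{2})b:(a,b)\in G\}$ of sets $G\subset\R^{2}$ with $\brho(G)\ge\delta^{\epsilon}$, for coefficient tuples $(a_1,a_2,b_1,b_2)$ outside a bad set. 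The value $(a_{1}-a_{2})(b_{1}-b_{2})$ underlying $\Pi$ is bilinear in its four variables and is never an affine function of a remaining pair, so there is no way to fit $\Pi(F)\ge\delta^{\epsilon}$ into the hypothesis structure of the proposition. Nor does \eqref{2ab} help: the containment $(A-A)(B-B)\subset AB+AB-AB-AB$ that you cite goes the wrong way, since \eqref{2ab} lower-bounds the dimension of the large set, not of its subsets. Indeed, a single product of difference sets is not expected to reach dimension $s+t$ in general; the additive convolution is not cosmetic, and the lemma is deliberately stated only for $k\ge 2$.

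The paper's proof therefore reduces only to $k=2$ and keeps the sum of two products: it rewrites $\Pi^{\pp 2}(E)\ge\delta^{\epsilon}$ as $(\mu\times\nu)^{4}(G_{8})\ge\delta^{\epsilon}$ with $G_{8}$ the set of $8$-tuples for which $(a_{1}-a_{2})(b_{3}-b_{4})+(b_{1}-b_{2})(a_{3}-a_{4})\in E$, pigeonholes to find a large set of frozen $6$-tuples whose fibres $G_{2}$ in the last pair $(a_{4},b_{4})$ have $(\mu\times\nu)$-measure $\ge\delta^{2\epsilon}$, fixes $(a_{3},b_{3})$, and chooses the frozen coefficients $(a_{1},b_{1},a_{2},b_{2})$ in $G_{4}\setminus\mathbf{Bad}$; only then is the expression affine in $(a_{4},b_{4})$ with coefficients $(a_{1}-a_{2}),(b_{1}-b_{2})$, and \eqref{eq:robust-sum-prod} applied to $G=G_{2}$ yields \eqref{form13}. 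This fibre-pigeonholing and freezing argument, together with the passage to Frostman restrictions via Lemma~\ref{frostman-energy} (which, incidentally, loses only in the constant, giving $\mu|_{A}(B(x,r))\le\delta^{-O(\epsilon)}r^{s}$, not in the exponent $s$), is precisely the content you label as ``the main obstacle''; as set up after your reduction to $k=1$ it cannot be carried out, so the proposal does not prove the lemma.
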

Since this lemma does not explicitly appear in \cite{OrponenShmerkin23}, we now briefly explain how to derive it from the results of that paper.
Recall that a Borel measure $\mu$ on $\R$ is said to be \emph{$(s,C)$-Frostman} if it satisfies
\[
    \mu(B(x,r))\leq Cr^s \quad\text{for all }x\in\R, r>0.
\]
The precise statement we shall need is \cite[Proposition 3.7]{OrponenShmerkin23}, see also \cite[Remark 3.11]{OrponenShmerkin23}, which reads as follows.

\begin{proposition}
    \label{prop:expansion}
    Given $s,t\in (0,1]$ and $\sigma\in [0,\min\{s+t,1\})$, there exist $\epsilon = \epsilon(s,t,\sigma) > 0$ and $\delta_{0} = \delta_{0}(s,t,\sigma,\epsilon) > 0$  such that the following holds for all $\delta \in (0,\delta_{0}]$.\\
    Let $\mu_{1},\mu_{2}$ be $(s,\delta^{-\epsilon})$-Frostman probability measures, let $\nu_{1},\nu_{2}$ be $(t,\delta^{-\epsilon})$-Frostman probability measures, all four measures supported on $[-1,1]$, and let $\brho$ be an $(s + t,\delta^{-\epsilon})$-Frostman probability measure supported on $[-1,1]^{2}$.
    Then there is a set $\mathbf{Bad} \subset \R^{4}$ with
    \[
        (\mu_{1} \times \mu_{2} \times \nu_{1} \times \nu_{2})(\mathbf{Bad}) \le \delta^{\epsilon},
    \]
    such that for every $(a_1,a_2,b_1,b_2)\in \R^{4} \, \setminus \, \mathbf{Bad}$ and every subset $G \subset \R^{2}$ satisfying $\brho(G) \geq \delta^{\epsilon}$, one has
    \begin{equation} \label{eq:robust-sum-prod}
        \Abs{\{(b_1- b_2)a + (a_1- a_2) b : (a,b) \in G\}}_\delta \ge \delta^{-\sigma}.
    \end{equation}
\end{proposition}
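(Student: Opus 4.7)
The key observation is that
\[
(b_1-b_2)\,a+(a_1-a_2)\,b \;=\; L_{\theta(\mathbf{q})}(a,b),
\qquad \theta(\mathbf{q}) := (b_1-b_2,\,a_1-a_2),
\]
where $L_\theta(x,y):=\theta_1 x+\theta_2 y$ is the linear projection $\R^2\to\R$ determined by the ``direction'' $\theta(\mathbf{q})\in\R^2$ associated with the quadruple $\mathbf{q}=(a_1,a_2,b_1,b_2)$. Thus \eqref{eq:robust-sum-prod} asserts a lower bound on the $\delta$-covering number of a one-dimensional linear projection of the $\brho$-thick set $G$, and the whole statement reduces to a discretised projection theorem in the plane, where the set of directions is drawn from the pushforward measure $\Theta:=\theta_\sharp(\mu_1\times\mu_2\times\nu_1\times\nu_2)$, and $\mathbf{Bad}$ is the $\theta$-preimage of an exceptional set of directions.

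The first step is to verify the Frostman regularity of $\Theta$. It factors as $\Theta=(\nu_1\pp(-\nu_2))\times(\mu_1\pp(-\mu_2))$; averaging the individual Frostman bounds over translations shows that each factor is still $(t,\delta^{-\epsilon})$- and $(s,\delta^{-\epsilon})$-Frostman on $\R$, respectively, so $\Theta(B(\theta_0,r))\lesssim\delta^{-2\epsilon}r^{s+t}$ for every $r\ge\delta$, i.e.\ $\Theta$ is $(s+t,\delta^{-2\epsilon})$-Frostman on $\R^2$. Moreover, the one-dimensional Frostman conditions make $\{\theta:|\theta|<\delta^{\epsilon_{0}}\}$ of negligible $\Theta$-measure for a suitable $\epsilon_0>0$, so this small-norm set can be absorbed into $\mathbf{Bad}$; on its complement, $L_\theta$ agrees, up to the bounded scalar $|\theta|$, with the orthogonal projection onto the line through $\theta/|\theta|$.

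With these hypotheses in hand, the main step is to invoke the discretised radial projection theorem of \cite{osw_radialprojection}, in the formulation that takes as inputs the $(s+t,\delta^{-\epsilon})$-Frostman measure $\brho$ on $[-1,1]^2$ and the $(s+t,\delta^{-2\epsilon})$-Frostman direction measure $\Theta$ on $\R^2$, and outputs an exceptional set $\Theta\text{-}\mathbf{Bad}\subset\R^2$ with $\Theta(\Theta\text{-}\mathbf{Bad})\le\delta^\epsilon$ such that
\[
|L_\theta(G)|_\delta\ge\delta^{-\sigma}
\]
for every $\theta\notin\Theta\text{-}\mathbf{Bad}$ and every $G\subset[-1,1]^2$ with $\brho(G)\ge\delta^\epsilon$. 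Setting $\mathbf{Bad}:=\theta^{-1}(\Theta\text{-}\mathbf{Bad})$, one has $(\mu_1\times\mu_2\times\nu_1\times\nu_2)(\mathbf{Bad})=\Theta(\Theta\text{-}\mathbf{Bad})\le\delta^\epsilon$, completing the argument. A standard rescaling identifies, in a bounded domain, radial projections from a far base point with linear projections up to bounded multiplicative factors.

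The hardest part is to obtain the projection theorem in the required ``robust'' form: (i) the set of directions is only required to be Frostman, not uniformly distributed on the sphere, (ii) the exceptional set of directions has small mass with respect to the Frostman direction measure $\Theta$ rather than Lebesgue measure on $S^1$, and (iii) the projection conclusion has to hold uniformly for \emph{every} $\brho$-thick subset $G$, not merely the full support. This is precisely the refinement supplied by \cite{osw_radialprojection}; the remaining work is bookkeeping, choosing $\epsilon(s,t,\sigma)$ small enough to absorb the $\delta^{-\epsilon}$ and $\delta^{-2\epsilon}$ losses arising above into the single parameter appearing in the statement of the proposition.
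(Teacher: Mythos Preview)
The paper does not give a proof of this proposition: it is quoted verbatim from \cite[Proposition~3.7]{OrponenShmerkin23} and used as a black box. So there is no ``paper's own proof'' to compare against here; one can only ask whether your sketch is a plausible outline of how the cited result is obtained.

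Your reduction to a planar projection problem is the right first move: the expression $(b_1-b_2)a+(a_1-a_2)b$ is indeed $L_{\theta(\mathbf{q})}(a,b)$ for a linear map $L_\theta$, and the Frostman bound $\Theta(B_r)\lesssim\delta^{-2\epsilon}r^{s+t}$ on $\R^2$ is correct. The gap is in the ``main step''. The theorem in \cite{osw_radialprojection} concerns \emph{radial} projections $x\mapsto (x-y)/|x-y|$ from base points $y\in\R^2$, not linear projections indexed by vectors $\theta\in\R^2$. Your ``far base point'' remark does not apply, since $\spt\Theta\subset[-2,2]^2$ overlaps $\spt\brho\subset[-1,1]^2$; there is no limiting regime in which radial projection from $\theta\in\spt\Theta$ approximates $L_\theta$. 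If instead you try to pass through genuine orthogonal projections by pushing $\Theta$ forward to $S^1$ via $\theta\mapsto\theta/|\theta|$, you lose: that pushforward is in general only $\max\{s,t\}$-Frostman (it is an $\alpha$-average of rescalings of an $s$-Frostman measure, where $\alpha$ is $t$-Frostman), so a Kaufman-type exceptional set bound on $S^1$ would only reach $\sigma<\max\{s,t\}$, not $\sigma<\min\{s+t,1\}$. Getting the pushforward up to dimension $\min\{s+t,1\}$ is itself a sum--product type statement of the same strength as the proposition.

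The actual argument in \cite{OrponenShmerkin23} does use \cite{osw_radialprojection}, but the passage from radial to linear projections is the substantive content there: one has to exploit that the \emph{direction} of the fibres of $L_\theta$ is the direction between two points drawn from the planar measures $\mu_i\times\nu_i$, so that the radial projection theorem (applied to those planar measures) controls the distribution of fibre directions on $S^1$. Your sketch elides this step, which is where all the difficulty lies.
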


The derivation of Lemma~\ref{lemma2} from Proposition~\ref{prop:expansion} is relatively formal; it mostly uses the link between the Frostman condition and the energy at scale $\delta$, and the pigeonhole principle to construct large fibres in product sets.
Let us first record an elementary statement about the energy at scale $\delta$ of a Frostman measure.
Below, we also use the energy $I_s^\delta(\mu)$ for a measure supported on $\R^d$, with $d\geq 2$.
The definition is the same as \eqref{eq:energy} in the one-dimensional case, except that $\abs{\cdot}$ denotes the Euclidean norm on $\R^d$. We also let $I_s^\delta(\mu)=I_s(\mu*P_\delta)$, where $\{P_\delta\}_{\delta>0}=\{\delta^{-d}P(\cdot/\delta)\}_{\delta>0}$ is an approximate identity, with $P\in C^\infty(\R^d)$ radially decreasing and satisfying $\mathbf{1}_{B_(0,1/2)} \leq P \leq \mathbf{1}_{B(0,1)}$.

\begin{lemma}[Frostman condition and $s$-energy]
    \label{frostman-energy} Fix $C\geq 1$, $s\in(0,d)$, and $\epsilon \in (0,\tfrac{1}{2}]$. Then the following holds for all $\delta > 0$ small enough. Let $\mu$ be a probability measure on $B(1) \subset \R^{d}$.
    \begin{enumerate}
        \item If $\mu$ satisfies $\mu(B(x,r))\leq Cr^s$ for all $x\in \R^{d}$ and all $r\in[\delta,\delta^\epsilon]$, then $I_s^\delta(\mu) \leq C\delta^{-d \epsilon }$.
        \item Conversely, if $I_s^\delta(\mu)\leq\delta^{-\epsilon}$, there exists a set $A$ such that $\mu(A)\geq 1-(\log1/\delta)\delta^{\epsilon}$ and for every $r\in[\delta,1]$, $\mu|_A(B(x,r))\leq \delta^{-2\epsilon}r^s$.
    \end{enumerate}
\end{lemma}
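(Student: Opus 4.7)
\emph{Part (1).} My plan is to dyadically decompose the potential $U_\delta(x) := \int |x-y|^{-s}\, d\mu_\delta(y)$ and integrate against $\mu_\delta$. Two preliminary consequences of the Frostman hypothesis are needed: evaluating at the smallest scale $r = \delta$ yields the density bound $\mu_\delta(y) \leq \delta^{-d}\mu(B(y,\delta)) \lesssim C\delta^{s-d}$; and the elementary inequality $\mathbf{1}_{B(x,r)} * P_\delta \leq \mathbf{1}_{B(x,r+\delta)}$ upgrades the Frostman condition to $\mu_\delta(B(x,r)) \leq \mu(B(x,2r)) \lesssim C r^s$ for $r \in [\delta, \delta^\epsilon/2]$. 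I would then split $U_\delta(x)$ into the three regimes $|x-y|\leq \delta$ (contributing $\lesssim C$ by the density bound and integrability of $|z|^{-s}$ for $s < d$), $\delta < |x-y| \leq \delta^\epsilon$ (contributing $\lesssim C\log(1/\delta)$ after dyadic decomposition, using the Frostman bound on each annulus), and $|x-y| > \delta^\epsilon$ (contributing $\lesssim \delta^{-s\epsilon}$ via $|x-y|^{-s} \leq \delta^{-s\epsilon}$ and $\mu_\delta(\R^d) = 1$). Integrating against $d\mu_\delta(x)$ and combining gives $I_s^\delta(\mu) \lesssim C\log(1/\delta) + \delta^{-s\epsilon}$, and since $s \leq d$ this is bounded by $C\delta^{-d\epsilon}$ for $\delta$ small enough.

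\emph{Part (2).} The key observation is that $I_s^\delta(\mu) = I_s(\mu_\delta)$ is comparable, up to positive multiplicative constants, to the truncated energy
\[
    \iint \min(|x-y|^{-s},\, \delta^{-s})\,d\mu(x)\,d\mu(y).
\]
Indeed, one may write $I_s(\mu_\delta) = \iint K(u-v)\,d\mu(u)\,d\mu(v)$ with $K = |\cdot|^{-s} * P_\delta * P_\delta$, and check directly from the definition of $P_\delta$ that $K(w) \sim \min(|w|^{-s}, \delta^{-s})$. I would then define the truncated potential $W(x) := \int \min(|x-y|^{-s}, \delta^{-s})\,d\mu(y)$; by Fubini, $\int W\,d\mu \lesssim I_s^\delta(\mu) \leq \delta^{-\epsilon}$, so Markov's inequality yields $\mu\{W > \delta^{-2\epsilon}\} \lesssim \delta^\epsilon$.

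I would then set $A := \{x : W(x) \leq \delta^{-2\epsilon}\}$, so that $\mu(A^c) \lesssim \delta^\epsilon$ (this is sharper than the stated bound; the extra logarithm would naturally appear if one ran Markov separately at each dyadic scale rather than using a single truncated potential). For $x \in A$ and $r \in [\delta, 1]$, the integrand defining $W(x)$ is at least $r^{-s}$ on $B(x,r)$ (using crucially $\delta \leq r$ in both cases of the $\min$), so $\mu(B(x,r)) \leq r^s W(x) \leq \delta^{-2\epsilon} r^s$, and a fortiori the same bound holds for $\mu|_A$. For centers $x \notin A$, either $B(x,r) \cap A = \emptyset$ (and then $\mu|_A(B(x,r)) = 0$) or one picks any $z \in B(x,r) \cap A$ and uses $B(x,r) \subset B(z,2r)$ to reduce to the previous case, at the cost of a factor $2^s$ absorbed into a slight adjustment of $\epsilon$. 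The only mildly nontrivial ingredient is the kernel estimate $K \sim \min(|\cdot|^{-s}, \delta^{-s})$; the rest is routine tracking of constants in the relationship between $\mu$ and $\mu_\delta$.
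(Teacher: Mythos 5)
Your argument is correct. Part (1) is essentially the paper's own proof: the paper writes the potential via the layer-cake identity $\int|x-y|^{-s}\,d\mu_\delta(y)=s\int\mu_\delta(B(x,r))r^{-s-1}\,dr$ and splits into the ranges $(0,\delta]$, $[\delta,\delta^{\epsilon}]$, $[\delta^{\epsilon},\infty)$, which is the same three-regime estimate you carry out with dyadic annuli, using the same inputs (the density bound $\mu_\delta\lesssim C\delta^{s-d}$ and the transfer of the Frostman bound from $\mu$ to $\mu_\delta$ up to doubling the radius), followed by the same absorption of constants into the slack $\delta^{-(d-s)\epsilon}$. Part (2) is where you genuinely deviate. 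The paper stays with the layer-cake formula, applies Markov's inequality separately at each dyadic scale $2^{-u}$ for $u=0,1,\dots,\lfloor\log 1/\delta\rfloor$, and takes $A$ to be the complement of the union of these $\lesssim\log(1/\delta)$ bad sets; that union is precisely the source of the factor $(\log 1/\delta)$ in the statement. You instead compare the smoothed kernel $|\cdot|^{-s}\ast P_\delta\ast P_\delta$ pointwise with the truncated kernel $\min(|\cdot|^{-s},\delta^{-s})$ (your kernel estimate is correct, with constants depending only on $s$ and $d$; only the lower bound is actually needed) and run a single Markov argument for the truncated potential $W$, which yields the Frostman bound at all scales $r\in[\delta,1]$ simultaneously and the sharper exceptional bound $\mu(A^{c})\lesssim\delta^{\epsilon}$, which implies the stated one once $\delta$ is small. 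You are also more explicit than the paper in converting the bound at centres $x\in A$ into a bound on $\mu|_A(B(x,r))$ for arbitrary centres via the doubling trick. One cosmetic remark: since $\epsilon$ is fixed by the statement, the factor $2^{s}$ from doubling is more cleanly absorbed by running Markov at threshold $2^{-s}\delta^{-2\epsilon}$ (or by the logarithmic slack in the stated measure bound) than by ``adjusting $\epsilon$''; this is the same level of constant-tracking as the $\lesssim$ appearing in the paper's own final display.
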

\begin{proof}
    To begin, observe that
    \begin{equation} \label{eq:local-energy}
        \int\abs{x-y}^{-s}\,d\mu_\delta(y) = s\int \mu_\delta(B(x,r)) r^{-s-1}\,dr.
    \end{equation}
    Assume first that $\mu$ satisfies the Frostman condition $\mu(B(x,r))\leq Cr^s$ for $r\in[\delta,\delta^\epsilon]$. It is not difficult to check that the measure $\mu_{\delta}$ with density $\mu \ast P_{\delta}$ satisfies
    \begin{displaymath} \mu_{\delta}(B(x,r)) \lesssim \begin{cases} C\delta^{s - d}r^{d}, & 0 < r \leq \delta, \\ Cr^{s}, & \delta \leq r \leq \delta^{\epsilon}, \\ 1, & r \geq \delta^{\epsilon}. \end{cases} \end{displaymath}
    By splitting the integral in \eqref{eq:local-energy} into the three intervals $(0,\delta]$, $[\delta,\delta^{\epsilon}]$, and $[\delta^{\epsilon},\infty)$, we find that
    \begin{displaymath}
        \int\abs{x-y}^{-s}\,d\mu_\delta(y)  \lesssim_{s}C\delta^{-s\epsilon}.
    \end{displaymath}
    Since $\mu_{\delta}$ is a probability measure, and $s < d$, this implies $I_{s}^{\delta}(\mu) \leq C\delta^{-d\epsilon}$ for $\delta > 0$ small enough.

    For the converse, we observe that \eqref{eq:local-energy} and a change of variables yield
    \[
        I_s^\delta(\mu) = s\iint \mu_\delta(B(x,r)) r^{-s}\,\frac{dr}{r}\,d\mu(x)
        = (\log 2)\cdot s \iint \mu_\delta(B(x,2^{-u})) 2^{su}\,du\,d\mu(x).
    \]
    If $I_s^\delta(\mu)\leq\delta^{-\epsilon}$, letting
    \[
        E_u=\{x\ : \ 2^{su}\mu_\delta(B(x,2^{-u}))> \delta^{-2\epsilon}\},
    \]
    one gets $\mu(E_u)\leq\delta^{\epsilon}$.
    So, for $E=\bigcup E_u$, where $u=0,1,\dots,\lfloor\log1/\delta\rfloor$,
    we find
    \(
    \mu(E) \leq (\log1/\delta)\delta^{\epsilon}.
    \)
    Thus, letting $A=\R^{d} \, \setminus \, E$, one indeed has
    \[
        \mu(A)\geq 1-(\log1/\delta)\delta^{\epsilon}
    \]
    and for all $x$ in $A$, for all $r\in[\delta,1]$, $\mu(B(x,r))\lesssim \delta^{-2\epsilon}r^s$.
\end{proof}

\begin{proof}[Proof of Lemma~\ref{lemma2}]
    First of all, we may assume that $k = 2$, since if $k > 2$, we may write
    \begin{displaymath}
        \Pi^{\pp k}(E) = \int \Pi^{\pp 2}(E - x_{3} - \ldots - x_{k}) \, d\Pi(x_{3}) \cdots \, d\Pi(x_{k}),
    \end{displaymath}
    and in particular there exists a vector $(x_{3},\ldots,x_{k})$ such that $\Pi^{\pp 2}(E - x_{3} - \ldots - x_{k}) \geq \delta^{\epsilon}$.
    After this, it suffices to prove \eqref{form13} with $E - x_{3} - \ldots - x_{k}$ in place of $E$.

    Second, we may assume that the measures $\mu,\nu$ satisfy the Frostman conditions
    \begin{displaymath}
        \mu(B(x,r)) \leq \delta^{-6\epsilon}r^{s} \quad \text{and} \quad \nu(B(x,r)) \leq \delta^{-6\epsilon}r^{t}
    \end{displaymath}
    for $\delta \leq r \leq 1$ and all $x \in \R$.
    Indeed, since $I_s^\delta(\mu)\leq\delta^{-3\epsilon}$, Lemma~\ref{frostman-energy} shows that there exists a Borel set $A \subset \R$ of measure $\mu(A) \geq 1 - \delta^{2\epsilon}$ with the property $(\mu|_{A})(B(x,r)) \leq \delta^{-6\epsilon}r^{s}$ for all $x\in\R$ and all $r\in[\delta,1]$.
    Similarly, we may find a Borel set $B \subset \R$ of measure $\nu(B) \geq 1 - \delta^{2\epsilon}$ with the property $(\nu|_{B})(B(x,r)) \leq \delta^{-6\epsilon}r^{t}$.
    Now, we still have $\overline{\Pi}^{\pp 2}(E) \geq \tfrac{1}{2}\delta^{\epsilon}$, where
    \begin{displaymath}
        \overline{\Pi} := (\mu|_{A} \mm \mu|_{A}) \ff (\nu|_{B} \mm \nu|_B).
    \end{displaymath}
    Therefore, we may proceed with the argument, with $\mu,\nu$ replaced by $\mu|_{A},\nu|_{B}$.

    Let us rewrite the condition $\Pi^{\pp 2}(E) \geq \delta^{\epsilon}$ as $(\mu \times \nu)^{4}(G_8) \geq \delta^{\epsilon}$, where
    \begin{displaymath}
        G_8 := \{(a_{1},b_{1},\ldots,a_{4},b_{4}) \in \R^{8} : (a_{1} - a_{2})(b_{3} - b_{4}) + (b_{1} - b_{2})(a_{3} - a_{4}) \in E\}.
    \end{displaymath}
    In particular, there exists a subset $G_{6} \subset \R^{6}$ of measure $(\mu \times \nu)^{3}(G_{6}) \geq \delta^{2\epsilon}$ such that for every $(a_1,b_1,a_2,b_2,a_3,b_3)$ in $G_6$, one has $(\mu \times \nu)(G_{2}) \geq \delta^{2\epsilon}$, where
    \begin{equation}\label{form14}
        G_{2} := \{(a_{4},b_{4}) \in \R^{2} : (a_{1},b_{1},\ldots,a_{4},b_{4}) \in G_{8}\}.
    \end{equation}
    Next, we plan to apply Proposition \ref{prop:expansion}. To make this formally correct, let us "freeze" two of the variables, say $(a_{3},b_{3})$: more precisely, fix $(a_{3},b_{3})$ in such a way that $(\mu \times \nu)^{2}(G_{4}) \geq \delta^{2\epsilon}$, where
    \begin{displaymath}
        G_4 := \{(a_{1},b_{1},a_{2},b_{2}) \in \R^{4} : (a_{1},b_1,a_{2},b_{2},a_3,b_{3}) \in G_{6}\}.
    \end{displaymath}
    If $\epsilon$ is chosen small enough in terms of $s$, $t$ and $\sigma := s+t-\kappa$, we may apply Proposition \ref{prop:expansion} with $\mu_{1} = \mu_{2} = \mu$ and $\nu_{1} = \nu_{2} = \nu$, and $\brho = \mu \times \nu$, using $6\epsilon$ instead of $\epsilon$.
    Then, one has
    \begin{displaymath}
        (\mu \times \nu)^{2}(\mathbf{Bad})
        < \delta^{6\epsilon} \leq (\mu \times \nu)^{2}(G_{4}),
    \end{displaymath}
    and \eqref{eq:robust-sum-prod} holds for all $(a_{1},b_{1},a_2,b_{2}) \in \R^{4} \, \setminus \, \mathbf{Bad}$.
    Consequently, we may find a $4$-tuple $(a_{1},b_1,a_{2},b_{2}) \in G_{4} \, \setminus \, \mathbf{Bad}$, and eventually a $6$-tuple
    \begin{displaymath}
        (a_{1},b_1,a_{2},b_{2},a_3,b_{3}) \in G_{6}
    \end{displaymath}
    such that whenever $G \subset \R^{2}$ is a Borel set with $(\mu \times \nu)(G) = \brho(G) \geq \delta^{6\epsilon}$, then
    \begin{align*}
         & \Abs{\{(a_{1} - a_{2})(b_{3} - b_{4}) + (b_{1} - b_{2})(a_{3} - a_{4}) : (a_{4},b_{4}) \in G\}}_\delta                                 \\
         & = \Abs{\{(a_{1} - a_{2})b_{4} + (b_{1} - b_{2})a_{4} : (a_{4},b_{4}) \in G\}}_\delta \geq \delta^{-\sigma} = \delta^{-s - t + \kappa}.
    \end{align*}
    In particular, by \eqref{form14}, this can be applied to the set $G := G_{2}$, and the conclusion is that
    \begin{equation}\label{form15}
        \Abs{\{(a_{1} - a_{2})(b_{3} - b_{4}) + (b_{1} - b_{2})(a_{3} - a_{4}) : (a_{4},b_{4}) \in G_{2}\}}_\delta \geq \delta^{-s - t + \kappa}.
    \end{equation}
    However, since $(a_{1},b_{1},a_{2},b_{2},a_{3},b_{3}) \in G_{6}$, we have $(a_{1},b_{1},\ldots,a_{4},b_{4}) \in G_8$ for all $(a_{4},b_{4}) \in G_{2}$, and consequently
    \begin{displaymath}
        \forall (a_{4},b_{4}) \in G_{2},\qquad (a_{1} - a_{2})(b_{3} - b_{4}) + (b_{1} - b_{2})(a_{3} - a_{4}) \in E.
    \end{displaymath}
    Therefore, \eqref{form15} implies \eqref{form13}.
\end{proof}

We now want to go from the combinatorial conclusion of Lemma~\ref{lemma2} to the more measure theoretic statement of Lemma~\ref{lemma3} involving energies at scale $\delta$.
For that, our strategy is similar in flavour to the one used by Bourgain and Gamburd~\cite{bg_su2} to derive their flattening lemma, decomposing the measures into dyadic level sets.

\begin{proof}[Proof of Lemma \ref{lemma3}] Let $\{P_{r}\}_{r > 0} = \{r^{-1}P(\cdot/r)\}_{r > 0}$ be a radially decreasing approximate identity satisfying $\mathbf{1}_{[-1/2,1/2]} \leq P \leq \mathbf{1}_{[-1,1]}$. We claim that
    \begin{equation}\label{form33} P(x/r) \leq P(x'/(4r)), \qquad |x - x'| \leq r. \end{equation}
    Indeed, if $P(x/r) \neq 0$, then $|x| \leq r$, hence $|x'| \leq 2r$, and $P(x'/(4r)) = 1$. We further claim:
    \begin{equation}\label{form34}
        P\left(\frac{x - y}{4r} \right) \lesssim \frac{1}{5r} \int_{B(x,5r)} P\left(\frac{x' - y}{r} \right) \, dx', \qquad x,y \in \R. \end{equation}
    Indeed, if $P((x - y)/4r) \neq 0$, then $|x - y| \leq 4r$, so $B(y,r/2) \subset B(x,5r)$. Now, \eqref{form34} follows by noting that $P((x' - y)/r) = 1$ for $x' \in B(y,r/2)$.

    As a final preliminary, we record that if $\sigma$ is a finite Borel measure on $\R^{d}$, and $0 < s < d$, then
    \begin{equation}\label{form35} I^{r}_{s}(\sigma) \lesssim I^{\delta}_{s}(\sigma), \qquad \delta \leq r \leq 1. \end{equation}
    To see this, note, as a first step, that $I^{r}_{s}(\sigma) \leq I_{s}(\sigma)$. This follows from the Fourier-analytic expression of the energy \ref{eq:energy-fourier}, and noting that $\|\hat{P}\|_{\infty} = \|P\|_{1} = 1$. As a second step, note that $P_{r} \lesssim P_{r} \ast P_{\delta}$ for $\delta \leq r \leq 1$, so also $\sigma_{r} \lesssim \sigma_{r} \ast P_{\delta}$. Consequently, $I^{r}_{s}(\sigma) \lesssim I^{r}_{s}(\sigma_{\delta}) \leq I_{s}(\sigma_{\delta}) = I_{s}^{\delta}(\sigma)$ by the first step of the proof.

    We then begin the proof in earnest. We abbreviate $\Pi^{\pp l}_{r} := (\Pi^{\pp l}) \ast P_{r}$. The goal will be to show that if $k \geq 1$ is sufficiently large (depending on $s,t,\kappa$), then, for all $r \in [\delta,1]$,
    \begin{equation}\label{form16}
        J_{r}(k) := \norm{\Pi_{r}^{\pp 2^k}}_{2} \leq \delta^{-\kappa/2}r^{(s + t - 1)/2}.
    \end{equation}
    This implies in a standard manner (using for example \cite[Lemma 12.12]{mattila}, Plancherel and a dyadic frequency decomposition) that
    \[
        I_{s + t}^\delta(\Pi^{\pp 2^k}) \lesssim \delta^{-2\kappa}.
    \]
    Note that the sequence $\{J_{r}(k)\}_{k \in \N}$ is decreasing in $k$, since by Young's inequality
    \begin{displaymath}
        J_{r}(k + 1) = \norm{\Pi^{\pp 2^k}\pp\Pi_r^{\pp 2^k}}_2
        \leq \norm{\Pi^{\pp 2^k}}_1\cdot\norm{\Pi_r^{\pp 2^k}}_2
        = \norm{\Pi_r^{\pp 2^k}}_2
        = J_r(k).
    \end{displaymath}
    Therefore, in order to prove \eqref{form16}, the value of $k$ may depend on $r$, as long as it is uniformly bounded in terms of $s,t,\kappa$.
    Eventually, the maximum of all possible values for $k$ will work for all $\delta \leq r \leq 1$.

    Let us start by disposing of large $r$, i.e. $r\geq\delta^{\kappa/2}$.
    For that, we have the trivial bound
    (recalling also that we assumed $s+t\leq 1$)
    \[
        J_{r}(k) \leq J_{r}(0) \lesssim r^{-1} \leq \delta^{-\kappa/2}r^{(s + t - 1)/2}.
    \]

    So, it remains to treat the case $r \in[\delta, \delta^{\kappa/2}]$.
    We now fix such a scale $r$.
    By the pigeonhole principle, given a small parameter $\epsilon \in (0,\tfrac{\kappa}{4})$ to be fixed later (the choice will roughly be determined by applying Lemma \ref{lemma2} to the parameters $s,t,\kappa$), there exists $k \lesssim 1/\epsilon$, depending on $r$, such that
    \begin{equation}\label{form17}
        \|\Pi_{r}^{\pp 2^{k+1}}\|_{2} = J_{r}(k + 1) \geq r^{\epsilon}J_{r}(k) = r^{\epsilon} \|\Pi_{r}^{\pp 2^k}\|_{2}.
    \end{equation}
    This index $k$ is fixed for the rest of the argument, so we will not display it in (all) subsequent notation.
    We may assume that $\|\Pi_{r}^{\pp 2^{k}}\|_{2} \geq 1$, otherwise \eqref{form16} is clear.

    Let $\mathcal{D}_{r}$ be the dyadic intervals of $\R$ of length $r$. For each $I \in \mathcal{D}_{r}$, we set
    \begin{displaymath} a_{I} := \sup_{x \in I} \Pi_{r}^{\pp 2^{k}}(x). \end{displaymath}
    Next, we define the collections
    \begin{displaymath} \mathcal{A}_{j} := \begin{cases} \{I \in \mathcal{D}_{r} : a_{I} \leq 1\}, & j = 0, \\ \{I \in \mathcal{D}_{r} : 2^{j - 1} < a_{I} \leq 2^{j}\}, & j \geq 1. \end{cases} \end{displaymath}
    We also define the sets $A_{j} := \cup \mathcal{A}_{j}$; note that the sets $A_{j}$ are disjoint for distinct $j$ indices.
    Since $\Pi$ is a probability measure, $\Pi_{r}^{\pp 2^{k}} \lesssim 1/r$ for all $k \geq 1$. Therefore $A_{j} = \emptyset$ for $j \geq C\log(1/r)$, and evidently
    \begin{equation}\label{form18a} \Pi_{r}^{\pp 2^{k}} \lesssim \sum_{j = 0}^{C\log(1/r)} 2^{j} \cdot \mathbf{1}_{A_{j}}. \end{equation}
    Conversely, we claim that
    \begin{equation}\label{form18b}
        \sum_{j = 1}^{C\log(1/r)} 2^{j} \cdot \mathbf{1}_{A_{j}} \lesssim \Pi^{\pp 2^{k}}_{4r}.
    \end{equation}
    To see this, fix $x \in A_{j}$ with $j \geq 1$, and let $I = I(x) \in \mathcal{D}_{r}$ be the dyadic $r$-interval containing $x$. Then $a_{I} \geq 2^{j - 1}$, which means that there exists another point $x' \in I$ with $\Pi_{r}^{\pp 2^{k}}(x') \geq 2^{j - 1}$. Then, using that $|(x' - y) - (x - y)| \leq r$ for all $y \in \R$,
    \begin{displaymath} \Pi_{4r}^{\pp 2^{k}}(x) = \frac{1}{4r} \int P\left(\frac{x - y}{4r} \right) \, d\Pi^{\pp 2^{k}}(y) \stackrel{\eqref{form33}}{\geq} \frac{1}{4r} \int P\left(\frac{x' - y}{r} \right) \, d\Pi^{\pp 2^{k}}(y) \geq \tfrac{1}{4} \Pi^{\pp 2^{k}}_{r}(x'), \end{displaymath}
    and consequently $\Pi_{4r}^{\pp 2^{k}}(x) \gtrsim 2^{j}$. This proves \eqref{form18b}.

    Based on \eqref{form18b} (and our hypothesis $\|\Pi_{r}^{\pp 2^{k}}\|_{2} \geq 1$ to treat the case $j = 0$) we may deduce, in particular, that
    \begin{equation}\label{form19}
        2^j \norm{\mathbf{1}_{A_j}}_2
        \lesssim \norm{\Pi_{4r}^{\pp 2^k}}_2 \stackrel{\eqref{form34}}{\lesssim} \norm{\Pi_{r}^{\pp 2^{k}}}_{2}, \qquad j \geq 0.
    \end{equation}
    (To see the second inequality, we use \eqref{form34} to deduce that $\mu_{4r}$ is bounded by the maximal function of $\mu_{r}$ for any probability measure $\mu$ on $\R$, in particular $\mu = \Pi^{\pp 2^{k}}$.)

    Next, using \eqref{form18a}, we may pigeonhole an index $j \geq 0$ and a set $A:=A_j$ with the property
    \begin{displaymath}
        \|\Pi_{r}^{\pp 2^{k+1}}\|_{2} \leq \|\Pi_{r}^{\pp 2^k} \mm \Pi_{r}^{\pp 2^k}\|_{2}
        \lesssim (\log1/r)\cdot 2^j\cdot\norm{\mathbf{1}_{A} \mm \Pi_{r}^{\pp 2^k}}_{2}.
    \end{displaymath}
    Since further, by Plancherel followed by Cauchy-Schwarz and Plancherel again,
    \begin{displaymath}
        \|\mathbf{1}_{A} \mm \Pi_{r}^{\pp 2^k}\|_{2}
        \leq \|\mathbf{1}_{A} \mm \mathbf{1}_{A}\|_{2}^{1/2}\cdot\|\Pi_{r}^{\pp 2^{k+1}}\|_{2}^{1/2}
    \end{displaymath}
    we deduce that
    \begin{align}
        r^{\epsilon}\|\Pi_{r}^{\pp 2^k}\|_{2}
        \stackrel{\eqref{form17}}{\leq} \|\Pi_{r}^{\pp 2^{k+1}}\|_{2}
         & \lesssim (\log1/r)^2\cdot 2^{2j}\cdot\norm{\mathbf{1}_{A}\mm\mathbf{1}_{A}}_2 \notag                                                                                                                               \\
         & \label{form21} \lesssim r^{-\epsilon}\cdot 2^{2j}\cdot\norm{\mathbf{1}_{A}}_1\norm{\mathbf{1}_{A}}_2 \stackrel{\eqref{form19}}{\lesssim} r^{-\epsilon}\cdot 2^j\cdot\norm{\mathbf{1}_A}_1\norm{\Pi_r^{\pp 2^k}}_2.
    \end{align}
    At this point we note that if $j = 0$, then the preceding inequality shows that $\|\Pi_{r}^{\pp 2^{k}}\|_{2} \lesssim r^{-2\epsilon}$, which is better than \eqref{form16}, since we declared that $\epsilon \leq \kappa/4$. So, we may and will assume that $j \geq 1$ in the sequel.

    In (i) below we combine \eqref{form21} and \eqref{form18b}, whereas in (ii) below we combine \eqref{form21} with $2^{j}\|\mathbf{1}_{A}\|_{1} \lesssim \|\Pi_{r}^{\pp 2^{k}}\|_{1} = 1$:
    \begin{itemize}
        \item[(i)] $r^{2\epsilon} \lesssim 2^j\norm{\mathbf{1}_A}_1 \lesssim \Pi_{4r}^{\pp 2^k}(A) \leq \Pi^{\pp 2^{k}}([A]_{4r})$, where $[A]_{4r}$ is the $4r$-neighbourhood of $A$,
        \item[(ii)] $r^{2\epsilon}\|\Pi_{r}^{\pp 2^k}\|_{2} \lesssim 2^j\norm{\mathbf{1}_A}_2 \lesssim \norm{\mathbf{1}_A}_1^{-1}\norm{\mathbf{1}_A}_2$ = $\norm{\mathbf{1}_A}_1^{-1/2}$.
    \end{itemize}
    Since $A$ is a union of intervals in $\mathcal{D}_{r}$, one has
    $\norm{\mathbf{1}_A}_1\sim r\abs{A}_r$, so item~(ii) yields
    \[
        \norm{\Pi_r^{\pp 2^k}}_2 \lesssim r^{-\frac{1}{2}-2\epsilon} \abs{A}_r^{-1/2}.
    \]
    On the other hand, since
    \begin{displaymath}
        I^{4r}_{s}(\mu) \stackrel{\eqref{form35}}{\lesssim} I^{\delta}_{s}(\mu) \leq \delta^{-\epsilon} \leq r^{-\epsilon/\kappa} \quad \text{and} \quad I^{4r}_{t}(\nu) \lesssim r^{-\epsilon/\kappa},
    \end{displaymath}
    Lemma~\ref{lemma2} applied at scale $4r$ (and recalling (i) above) shows that if $\epsilon$ is chosen small enough in terms of $s,t,\kappa$, then
    \[
        \abs{A}_r \geq r^{-s-t+\kappa}.
    \]
    We thus obtain what we claimed in \eqref{form16}:
    \begin{displaymath}
        \|\Pi_{r}^{\pp 2^k}\|_{2} \lesssim \delta^{-\kappa/2}r^{(s + t - 1)/2}.
    \end{displaymath}
    This completes the proof of Lemma \ref{lemma3}. \end{proof}

\section{The induction step}
\label{sec:induction}

The proof of Theorem~\ref{main} is by induction on $n$, starting from the $n=2$ case, already studied in Section~\ref{sec:n2}.
The induction step is based on the flattening results for additive-multiplicative convolutions developed in the previous section.
It will be essential in the argument to be able to switch the order of addition and multiplication. For that we record the following lemma, which is a simple application of the Cauchy-Schwarz inequality.

\begin{lemma}\label{order}
    Given two Borel probability measures $\mu$ and $\nu$ on $\R$, one has, for all $\xi$ in $\R$,
    \[
        \abs{\widehat{\mu\ff\nu}(\xi)}^2 \leq \big((\mu\mm\mu)\ff\nu\big)^\wedge(\xi).
    \]
\end{lemma}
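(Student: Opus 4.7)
The plan is to unfold the definitions and apply Cauchy--Schwarz with respect to $\nu$. First I would write
\[
\widehat{\mu\ff\nu}(\xi) = \iint e^{-2\pi i\xi xy}\,d\mu(x)\,d\nu(y) = \int \widehat{\mu}(\xi y)\,d\nu(y),
\]
using that $\mu\ff\nu$ is by definition the pushforward of $\mu\times\nu$ under $(x,y)\mapsto xy$ and Fubini.

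Since $\nu$ is a probability measure, Cauchy--Schwarz gives
\[
\abs{\widehat{\mu\ff\nu}(\xi)}^{2} = \Abs{\int \widehat{\mu}(\xi y)\,d\nu(y)}^{2} \leq \int \abs{\widehat{\mu}(\xi y)}^{2}\,d\nu(y).
\]

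The final step is the identity $\abs{\widehat{\mu}(\eta)}^{2} = \widehat{\mu\mm\mu}(\eta)$, which holds for every $\eta\in\R$ because $\mu\mm\mu$ is the pushforward of $\mu\times\mu$ under $(x_1,x_2)\mapsto x_1-x_2$, and hence
\[
\widehat{\mu\mm\mu}(\eta) = \iint e^{-2\pi i\eta(x_1-x_2)}\,d\mu(x_1)\,d\mu(x_2) = \widehat{\mu}(\eta)\overline{\widehat{\mu}(\eta)}.
\]
Plugging $\eta = \xi y$ and recognising the resulting integral $\int \widehat{\mu\mm\mu}(\xi y)\,d\nu(y)$ as $((\mu\mm\mu)\ff\nu)^{\wedge}(\xi)$ (by the same identity $(\sigma\ff\nu)^{\wedge}(\xi) = \int \widehat{\sigma}(\xi y)\,d\nu(y)$ applied with $\sigma = \mu\mm\mu$) completes the argument.

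There is no real obstacle here; the only thing to be a little careful about is that the right-hand side is genuinely a real non-negative quantity, which is visible from the expression $\int \abs{\widehat{\mu}(\xi y)}^{2}\,d\nu(y)$ obtained en route, so the inequality is meaningful as stated.
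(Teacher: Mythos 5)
Your argument is correct and is essentially the paper's own proof: both write $\widehat{\mu\ff\nu}(\xi)=\int\widehat{\mu}(\xi y)\,d\nu(y)$, apply Cauchy--Schwarz (Jensen) in the $\nu$-variable, and identify $\abs{\widehat{\mu}(\xi y)}^{2}$ with $\widehat{\mu\mm\mu}(\xi y)$ to recognise the right-hand side. Nothing further is needed.
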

\begin{proof}
    Writing the Fourier transforms explicitly, and applying the Cauchy-Schwarz (or Jensen's) inequality, one gets
    \[
        \Abs{\iint e^{-2\pi i \xi xy}\,d\mu x\,d\nu y}^2
        \leq \int\Abs{\int e^{-2\pi i\xi xy}\,d\mu x}^2\,d\nu y
        =  \iiint e^{-2\pi i\xi (x_1-x_2)y}\,d\mu x_1\,d\mu x_2\,d \nu y.
    \]
\end{proof}

\begin{proof}[Proof of Theorem~\ref{main}]
    For the base case $n=2$, we may apply Proposition~\ref{basecase}.
    Indeed, assuming $I_{s_i}^\delta(\mu_i)\leq\delta^{-\epsilon}$ for $i=1,2$, one has
    \[
        \norm{\mu_{i,\delta}}_2^2
        = \int \abs{\widehat{\mu_{i,\delta}}(\xi)}^2\,d\xi
        \lesssim \delta^{-1+s_i-\epsilon} \int \abs{\xi}^{1-s_i}\abs{\widehat{\mu_{i,\delta}}(\xi)}^2\,d\xi
        \overset{\eqref{eq:energy-fourier}}{\lesssim} \delta^{-1+s_i-2\epsilon}
    \]
    so if $s_1+s_2>1$, taking $\epsilon$ small enough to ensure $s_1+s_2-4\epsilon>1$, one finds, for every $\delta^{-1}\leq\abs{\xi}\leq 2\delta^{-1}$,
    \[
        \abs{\widehat{\mu_1\ff\mu_2}(\xi)} \leq \delta^{\frac{s_1+s_2-4\epsilon-1}{2}},
    \]
    which is the desired Fourier decay.

    Now let $n \geq 3$, and assume that we have already established the case $n - 1$ with the collection of parameters $\mathcal{S}_{n - 1} := \{s_{1} + s_{2},s_{3},\ldots,s_{n}\}$, and some constants
    \begin{equation}\label{form6}
        \epsilon_{n - 1}(\mathcal{S}_{n - 1}) > 0 \quad \text{and} \quad \delta_{0} := \delta_{0}(\mathcal{S}_{n - 1}) > 0.
    \end{equation}
    By decreasing $s_{1}$ and $s_{2}$ if needed, we assume that $s_{1} + s_{2} \leq 1$ in the sequel.

    Given $\xi$ with $\delta^{-1}\leq\xi\leq 2\delta^{-1}$, our goal is to bound
    \begin{displaymath}
        \mathcal{F}(\xi) := (\mu_1\ff\dots\ff\mu_n)^\wedge(\xi).
    \end{displaymath}
    Applying Lemma~\ref{order} twice, first with $\mu=\mu_1$ and $\nu=\mu_2\ff\dots\ff\mu_n$ and then with $\mu=\mu_2$ and $\nu=(\mu_1\mm\mu_1)\ff\mu_3\ff\dots\ff\mu_n$, yields
    \[
        \abs{\mathcal{F}(\xi)}^4\leq (\Pi\ff\mu_3\ff\dots\ff\mu_n)^\wedge(\xi),
    \]
    where $\Pi=(\mu_1\mm\mu_1)\ff(\mu_2\mm\mu_2)$. Using the same lemma again $k$ times (and noting  that $\Pi$ is symmetric around the origin), we further get
    \[
        \abs{\mathcal{F}(\xi)}^{2^{k+2}}\leq (\Pi^{\pp 2^k}\ff\mu_3\ff\dots\ff\mu_n)^\wedge(\xi).
    \]
    Lemma~\ref{lemma3} applied with constants $s := s_{1}$ and $t := s_{2}$, and $\kappa := \epsilon_{n - 1} := \epsilon_{n - 1}(\mathcal{S}_{n - 1})$, shows that if $\epsilon = \epsilon(s_{1},s_{2},\epsilon_{n - 1}) > 0$ is sufficiently small, $k = k(s_{1},s_{2},\epsilon_{n - 1})$ is sufficiently large, and $\mu_1$, $\mu_2$ satisfy $I_{s_j}^\delta(\mu_j)\leq\delta^{-\epsilon}$ for $j=1,2$, then
    \begin{displaymath}
        I_{s_{1} + s_{2}}^{\delta}(\Pi^{\pp 2^k}) \leq \delta^{-\kappa} = \delta^{-\epsilon_{n - 1}}.
    \end{displaymath}
    We apply our induction hypothesis to the collection of $n - 1$ probability measures
    \begin{displaymath}
        \{\bar{\mu}_{1},\ldots,\bar{\mu}_{n - 1}\} = \{\Pi^{\pp 2^k},\mu_{3},\ldots,\mu_{n}\}
    \end{displaymath}
    with exponents $\{s_{1} + s_{2},s_{3},\ldots,s_{n}\}$ to get
    \begin{displaymath}
        |\mathcal{F}(\xi)|^{2^{k+2}} \leq |\xi|^{-\epsilon_{n - 1}}.
    \end{displaymath}
    (To be precise, since the measure $\Pi^{\pp 2^k}$ is not supported on $[-1,1]$ but on $[-2^{k+2},2^{k+2}]$, so one rather needs to consider the rescaled measure $\bar{\mu}_1=(2^{-k-2})_*\Pi^{\pp 2^k}$, which satisfies $I_{s_1+s_2}^{\delta}(\bar{\mu}_1)\sim_k I_{s_1+s_2}^\delta(\Pi^{\pp 2^k})$ but the involved constant depending on $k$ is harmless.)
    This shows that the Fourier decay property holds for $n$, with constants $\epsilon_n:= \min\{\epsilon,\epsilon_{n - 1}\}$ and $\tau_n=\frac{\tau_{n-1}}{2^k}$.
    The necessary size of $k$ is determined by the application of Lemma~\ref{lemma3}, so it depends only on $\min\{s_{1},s_{2}\} > 0$.
    The proof of Theorem \ref{main} is complete.
\end{proof}

\section{A quantitative projection estimate}

For the proof of Theorem~\ref{second}, we first need to establish a quantitative combinatorial estimate on the size of projections of discretised sets.
Recall that for a set $X$ in $\R^d$, we write $\abs{X}_\delta$ to denote the covering number of $X$ at scale $\delta$.

\begin{definition}\label{def:deltaSSet}
    Given $s\geq 0$ and $K\geq 0$, a set $X \subset \R^d$ is a \emph{$(\delta,s,K)$-set} if it satisfies
    \begin{equation} \label{eq:delta-s-set}
        \forall x\in\R^d,\ \forall r\in[\delta,1],\quad
        \abs{X\cap B(x,r)}_\delta \leq K\cdot r^s\cdot\abs{X}_\delta.
    \end{equation}
    If the constant $K$ is universal we say that $X$ is a $(\delta,t)$-set.
\end{definition}
This definition differs from a related notion due to Katz and Tao, which is recalled as Definition \ref{def:KatzTaoSet} below. Note that the above definition closely resembles a Frostman condition.

The problem of estimating the dimension of projections of fractal sets has been extensively studied since the works of Marstrand~\cite{marstrand} and Kaufman~\cite{kaufman}. Ren and Wang~\cite{RenWang23} recently obtained the following optimal estimate for projections of discretised subsets of the plane.
Given $y \in \R$, we define the projection $\pi_y\colon\R^2\to\R$ by the formula
\begin{equation}\label{def:projections}
    \pi_y(x_1,x_2) = x_1 - y x_2. \end{equation}

\begin{thm}[Ren-Wang projection theorem {\cite[Theorem~4.1]{RenWang23}}]
    \label{thm:renwang}
    Fix parameters $s,t\in(0,1]$ and $\eta>0$.
    There exists $\epsilon=\epsilon(s,t,\eta)>0$ such that the following holds for all $\delta<\delta_0(s,t,\eta)$.\\
    If $X\subset[-1,1]^2$ is a $(\delta,2s,\delta^{-\epsilon})$ set and $Y\subset[0,1]$ a $(\delta,t,\delta^{-\epsilon})$ set, then the set
    \[
        Y_\eta = \left\{y\in Y\ :\ \abs{\pi_y(X)}_\delta \geq \delta^{-\min\{2s,s+\frac{t}{2},1\}+\eta}\right\}
    \]
    satisfies $\abs{Y_\eta}_\delta \geq (1-\delta^\epsilon)\abs{Y}_\delta$.
\end{thm}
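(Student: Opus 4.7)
The target exponent $\min\{2s, s+t/2, 1\}$ splits into three distinct regimes. The bound $2s$ is the trivial pigeonhole upper bound $\abs{\pi_y(X)}_\delta \le \abs{X}_\delta \lesssim \delta^{-2s - O(\epsilon)}$, valid for every $y$, and $1$ is the ambient-dimension upper bound coming from $\pi_y(X) \subset [-C,C]$. The nontrivial content is the exponent $s + t/2$, which is the sharp Szemer\'edi--Trotter / Furstenberg exponent; moreover the theorem asserts this bound holds for \emph{almost all} $y \in Y$, not merely for one generic $y$.

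The natural plan is an argument by contradiction through a dual incidence count. Assume $\abs{Y \setminus Y_\eta}_\delta > \delta^\epsilon \abs{Y}_\delta$; pigeonhole to extract a $(\delta, t, \delta^{-O(\epsilon)})$-set $Y' \subset Y$ such that for every $y \in Y'$ one can choose a $\delta$-cover $E_y$ of $\pi_y(X)$ of cardinality $\abs{E_y} \lesssim \delta^{-\sigma}$, where $\sigma := \min\{2s, s+t/2, 1\} - \eta/2$. Dualize by attaching to each $x = (x_1, x_2) \in X$ the line $\ell_x := \{(y, z) : z = x_1 - y x_2\}$ in the $(y, z)$-plane. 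Since $\pi_y(x) = x_1 - y x_2$, the $\delta$-tube around $\ell_x$ meets the planar point set $P := \bigcup_{y \in Y'} \{y\} \times E_y$ at every $y$-slice with $y \in Y'$. Extracting a $\delta$-separated sub-family $L \subset \{\ell_x : x \in X\}$ of size $\abs{L} \gtrsim \delta^{-2s + O(\epsilon)}$, and noting $\abs{P} \lesssim \delta^{-\sigma - t + O(\epsilon)}$, the total number of tube--point incidences satisfies $I(L, P) \gtrsim \abs{L} \cdot \abs{Y'}_\delta \gtrsim \delta^{-2s - t + O(\epsilon)}$.

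Inserting these counts into a $\delta$-discretized Szemer\'edi--Trotter-type estimate $I(L, P) \lesssim (\abs{L} \cdot \abs{P})^{2/3} + \abs{L} + \abs{P}$ (up to $\delta^{-O(\epsilon)}$ losses) forces $\sigma \ge \min\{s + t/2, 2s, 1\} - O(\eta + \epsilon)$, contradicting the choice of $\sigma$ once $\epsilon$ is taken small enough in terms of $\eta$. The main obstacle is precisely the validity of this discretized incidence bound under only Frostman-type $(\delta, s, K)$ non-concentration hypotheses, rather than exact $\delta$-separation of tubes or points: one must control $\delta^{-O(\epsilon)}$ losses uniformly across \emph{every} intermediate scale in $[\delta, 1]$. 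This is achieved in \cite{RenWang23} through a multiscale high-low decomposition combined with induction on scales, and since Theorem~\ref{thm:renwang} is invoked here as a black box, my plan is simply to cite it directly rather than reproduce their argument.
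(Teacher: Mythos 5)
Your plan to cite \cite[Theorem~4.1]{RenWang23} directly is exactly what the paper does: Theorem~\ref{thm:renwang} is stated as an imported black box, with no proof given beyond the remark that the Ren--Wang argument itself builds on \cite{OrponenShmerkin23} and Proposition~\ref{prop:expansion}. Your heuristic sketch somewhat understates the difficulty (the claimed discretized Szemer\'edi--Trotter bound under mere Frostman hypotheses is essentially the Furstenberg set problem that \cite{RenWang23} resolves, not an available off-the-shelf estimate), but since you defer to the citation this is immaterial and your approach matches the paper's.
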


\begin{remark}
    An estimate similar to (but weaker than) Theorem~\ref{thm:renwang} was previously obtained in \cite{OrponenShmerkin23}. For the purposes of establishing Theorem \ref{thm:rescy} below, and consequently also Theorem \ref{second}, the bounds from \cite{OrponenShmerkin23} would suffice (only the value of the constant $c$ in Theorem \ref{thm:rescy} would be different). We also note that the proof of \cite[Theorem~4.1]{RenWang23} builds upon~\cite{OrponenShmerkin23}, and in particular on the estimate given above as Proposition~\ref{prop:expansion}, which was already central in the proof of Theorem~\ref{main}.
\end{remark}

Our goal in this section is to derive from Theorem \ref{thm:renwang} a similar statement, but with weaker non-concentration assumptions on the sets $X$ and $Y$. (To be accurate, we will use the more technical "Furstenberg set version" of Theorem \ref{thm:renwang}, stated in Theorem \ref{thm:renWang23}.)
The lower bound on the projection's dimension will be of the form $s+ct$ for some absolute constant $c$; this is weaker than the conclusion of Theorem~\ref{thm:renwang}, but sufficient to obtain the desired exponential lower bound for the Fourier decay exponent of multiplicative convolutions.

\begin{thm}\label{thm:rescy}
    There exists a universal constant $c>0$ such that the following holds.
    (One may take $c=\frac{1}{24}$.)
    Fix $0 < t \leq s \le 3/4$.
    Let $A_i\subset [0,1]$ be $(\delta,s, \delta^{-ct})$-sets with $|A_i|_{\delta}\leq \delta^{-s}$. 
    Let $Y\subset [0,1]$ be a $(\delta,t,\delta^{-ct})$-set.
    Then there is $y\in Y$ such that
    \begin{equation*}
        |\pi_y(A_1\times A_2)|_{\delta} \ge \delta^{-s-ct}.
    \end{equation*}
\end{thm}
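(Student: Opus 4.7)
The product set $X := A_1\times A_2 \subset [0,1]^2$ inherits a planar non-concentration property from its factors: for every $p = (p_1,p_2) \in \R^2$ and $r\in[\delta,1]$,
\[
|X \cap B(p,r)|_\delta \leq |A_1 \cap B(p_1,r)|_\delta \cdot |A_2 \cap B(p_2,r)|_\delta \leq \delta^{-2ct}\, r^{2s}\,|X|_\delta,
\]
so $X$ is a $(\delta, 2s, \delta^{-2ct})$-set in $\R^2$. Since $\pi_y(A_1 \times A_2) = A_1 - yA_2$, the theorem asks for a lower bound on a single projection of $X$, which is exactly the setting of the Ren--Wang projection theorem (Theorem~\ref{thm:renwang}), applied with planar-dimension parameter $2s$ and direction-set dimension $t$. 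Ren--Wang's conclusion, for most $y \in Y$, is the bound $|\pi_y(X)|_\delta \geq \delta^{-\min\{2s,\,s+t/2,\,1\}+\eta}$. Choose $\eta$ as a small positive multiple of $t$, say $\eta = t(1/2 - c)/2$; a short case analysis using $t\leq s\leq 3/4$ verifies that for $c = 1/24$ we have $\min\{2s, s+t/2, 1\} - \eta \geq s + ct$. Indeed, the case $2s \leq s+t/2$ cannot occur (it would force $s \leq t/2 \leq s/2$); in the case $s+t/2 \leq 1$ the inequality reduces to $\eta \leq t(1/2-c)$, and in the case $s+t/2 > 1$ to $\eta \leq 1 - s - ct$, both of which hold by choice of $\eta$ and $c$.

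The genuine obstacle is that Ren--Wang requires non-concentration at level $\delta^{-\epsilon(s,t,\eta)}$, where $\epsilon(s,t,\eta) > 0$ is produced by the theorem but depends opaquely on $s, t$. Since $c$ must be universal, one cannot arrange in general that $2ct \leq \epsilon(s,t,\eta)$. I would resolve this via a multi-scale rescaling argument: if $X$ fails to satisfy the stronger Ren--Wang non-concentration, then $X$ nearly saturates its own weaker $(\delta, 2s, \delta^{-2ct})$-bound on some ball $B(p,\rho)$ with $\rho \in [\delta,1]$. Restricting to that ball and rescaling it to the unit square produces a set $X' := \rho^{-1}(X\cap B(p,\rho) - p) \subset [0,1]^2$ at scale $\delta' = \delta/\rho$ that is a $(\delta', 2s, (\delta')^{-\epsilon'})$-set with $\epsilon'$ strictly smaller than $2ct$. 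Iterating this a bounded number of times yields a rescaled configuration to which Ren--Wang does apply. In parallel, one tracks the direction set: the $(\delta, t, \delta^{-ct})$ non-concentration of $Y$ guarantees that after localising to the sub-interval of slopes relevant to the rescaled configuration, a suitably large subset of $Y$ survives and retains a $(\delta', t)$-type non-concentration (with a constant of the form $(\delta')^{-ct'}$ for some $t'$ comparable to $t$).

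Applying Ren--Wang at the rescaled scale produces some $y$ in (the rescaled) $Y$ with $|\pi_y(X')|_{\delta'} \geq (\delta')^{-s-ct}$, and this transfers back to the original scale through the identity $\pi_y(X) \supset \rho \cdot \pi_y(X') + (p_1 - y p_2)$, which preserves the covering-number estimate. The explicit value $c = 1/24$ emerges from balancing the slack in the case analysis of the first paragraph against the multiplicative losses incurred during rescaling. The step I expect to be the main obstacle is the bookkeeping of the rescaling: verifying that $X'$ and the localised-and-rescaled part of $Y$ both retain sufficient cardinality and non-concentration, that the iteration terminates after a bounded number of steps depending only on $s$ and $t$, and that no loss accumulates in the exponent beyond the budget $ct$.
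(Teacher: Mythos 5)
You have correctly identified the genuine obstacle: the threshold $\epsilon(s,t,\eta)$ in Theorem~\ref{thm:renwang} is not explicit, so a universal $c$ cannot guarantee that the $(\delta,2s,\delta^{-2ct})$-property of $X=A_1\times A_2$ meets it. However, the resolution you sketch does not work as stated, for two reasons. First, the transfer back from the rescaled configuration is lossy: if you localise $X$ to a ball $B(p,\rho)$ and rescale, the conclusion of Ren--Wang at the relative scale $\delta'=\delta/\rho$ is $|\pi_y(X')|_{\delta'}\ge(\delta')^{-s-ct}$, and the inclusion $\pi_y(X)\supset\rho\,\pi_y(X')+(p_1-yp_2)$ only gives $|\pi_y(X)|_\delta\ge|\pi_y(X')|_{\delta'}=(\delta/\rho)^{-s-ct}=\rho^{s+ct}\,\delta^{-s-ct}$, which falls short of the target by the factor $\rho^{s+ct}$; nothing in your sketch recovers this loss, and iterating the localisation only coarsens the relative scale further and makes the shortfall worse. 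Second, the same opacity problem recurs for the direction set: after localising, you keep for $Y$ a non-concentration constant of the form $(\delta')^{-ct'}$, i.e.\ again a power of the scale, which is exactly the kind of weak hypothesis that Ren--Wang cannot absorb --- the obstacle you started from has merely been transplanted to $Y$.

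The paper resolves both issues by different means. For $X$ it does not try to upgrade the non-concentration at all: it proves a quantitative substitute for Ren--Wang (Proposition~\ref{prop:configurations}, obtained from the multiscale Lemma~\ref{cor:ShWa}, a branching-function decomposition into superlinear pieces, and the nice-configuration form Theorem~\ref{thm:renWang23}) whose hypothesis is only a Katz--Tao $(\delta,2s,K)$-condition and whose conclusion degrades explicitly and polynomially in $K$, namely $|\pi_y(X)|_{\delta}\gtrapprox K^{-\frac{t}{4s(1-s)}}|X|_{\delta}^{\frac12+\frac{t}{4s}}$ (Corollary~\ref{lem:projections}); this explicit $K$-dependence is the essential new input and cannot be replaced by localisation of $X$ alone. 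For $Y$, the rescaling is of a different nature: one chooses an interval $I$ of \emph{minimal} length $\rho$ with $|Y\cap I|_\delta\ge\rho^{t/2}|Y|_\delta$, and minimality makes the rescaled slope set a $(\rho^{-1}\delta,t/2)$-set with an \emph{absolute} constant, trading exponent for constant. Finally, the loss $\rho^{s}$ coming from rescaling is recovered not by iteration but by decomposing the sheared $X$ into $\gtrapprox\delta^{ct}\rho^{-s}$ blocks of width $\rho$ whose projections have bounded overlap, and summing their contributions. Any repair of your argument would need analogues of all three ingredients.
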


The deduction of the above result from Theorem~\ref{thm:renwang} goes in two steps. First, we use rescaling on $X$ and a combinatorial argument to weaken the non-concentration on $X$; then, we use rescaling on $Y$ and pigeonholing to obtain the desired estimate.

\subsection{Rescaling $X$}
In addition to the notion of $(\delta,s,K)$-set above, we shall also use the following variant.

\begin{definition}\label{def:KatzTaoSet}
    We say that $X$ is a \emph{Katz-Tao $(\delta,s,K)$-set} if
    \begin{equation} \label{eq:Katz-Tao-delta-s-set}
        |X\cap B_{r}|_{\delta} \le K\cdot \left(\frac{r}{\delta}\right)^s,\quad r\in [\delta,1].
    \end{equation}
\end{definition}

\begin{definition}
    If $\mathcal{P}$ is a family of dyadic $\delta$-cubes, we say that $\mathcal{P}$ is a (Katz-Tao) $(\delta,s,K)$-set if $\cup \mathcal{P}$ is a (Katz-Tao) $(\delta,s,K)$-set in the sense of Definitions \ref{def:deltaSSet} and \ref{def:KatzTaoSet}.
\end{definition}

\begin{remark}
    A $(\delta,s,K)$-set $X$ with $|X|_{\delta} \leq \delta^{-s}$ is also a Katz-Tao $(\delta,s, K)$-set, and conversely, a Katz-Tao $(\delta,s,K)$-set with $\abs{X}_\delta\geq\delta^{-s}$ is a $(\delta,s,K)$-set.
\end{remark}

For the first step of the proof of Theorem~\ref{thm:rescy}, we introduce some notation and terminology. Let $D,m \in \N$. We use $\mathcal{D}_{\Delta}(X)$ to denote the dyadic $\Delta$-cubes intersecting a set $X$. A set $X\subset [0,1]^d$ is \emph{$\{2^{-Dj}\}_{j = 1}^{m}$-uniform} if for all $1 \leq j \leq m$, there exists $R_j \in \N$ such that
\begin{displaymath} |X\cap Q|_{2^{-Dj}}=R_j, \qquad Q \in \mathcal{D}_{2^{-D(j - 1)}}(X). \end{displaymath}
Given a $\{2^{-Dj}\}_{j = 1}^{m}$-uniform set $X \subset [0,1]^d$, and writing $\delta := 2^{-Dm}$, we define the \emph{branching function} $f\colon[0,1]\to[0,d]$ by
\[
    f(u) = \frac{\log\abs{X}_{\delta^u}}{\log1/\delta}
\]
if $\delta^u=2^{-Dj}$ for some $j$, and interpolating linearly for other values of $u$. We refer the reader to \cite[\S 2.3]{OrponenShmerkin23} for the elementary properties of uniform subsets and branching functions. Note that we use here a slightly different normalisation of the branching function than in \cite[Definition 2.21]{OrponenShmerkin23}. Our branching function is $d$-Lipschitz on $[0,1]$.

Below, we shall use several times the following observation, see \cite[Lemma 3.6]{MR3919361}: Given $\epsilon>0$, any set $X\subset [0,1]^d$ contains a $\{2^{-Dj}\}_{j = 1}^{m}$-uniform subset $X'\subset X$ with $|X'|_\delta\ge \delta^{\epsilon}|X|_\delta$ provided that $\delta = 2^{-Dm}$, and both $D,m \in \N$ are sufficiently large in terms of $\epsilon>0$. This will allow us to assume that sets are $\{2^{-Dj}\}_{j = 1}^{m}$-uniform without loss of generality. If the choice of $D \geq 1$ is not important, we will abbreviate "$\{2^{-Dj}\}_{j = 1}^{m}$-uniform" to simply "uniform".

In the rest of this section, we hide small negative powers of $\delta$ in the notation $\lessapprox$, $\gtrapprox$, and $\approx$.
More precisely, we write $F(\delta) \lessapprox G(\delta)$ if for all $\epsilon>0$, one has $F(\delta)\leq \delta^{-\epsilon} G(\delta)$ provided $\delta$ is sufficiently small in terms of $\epsilon$. We can then recast the above observation as follows: a set $X\subset [0,1]^d$ contains a uniform subset $X'$ with $|X'|_\delta\gtrapprox \delta^{\epsilon}|X|_\delta$. (This involves a choice of $D$ depending on $\epsilon$, that is also hidden from the notation.)

Proposition \ref{prop:configurations} below uses some terminology and previous results from \cite{OS23,RenWang23,ShmerkinWang22b}, so we start by reviewing these. Below, $\mathcal{D}_{\delta}$ refers to the dyadic squares $p \subset [0,1]^{2}$ of side-length $\delta \in 2^{-\N}$, and $\mathcal{T}^{\delta}$ is the family of \emph{dyadic $\delta$-tubes}, see \cite[Definition 2.10]{OS23}. These are, by definition, sets of the form $T=\cup_{(a,b)\in p} \mathbf{D}(a,b)$, where $p\in\mathcal{D}_{\delta}$, and $\mathbf{D}$ is the usual point-line duality map $\mathbf{D}(a,b) := \{(x,y) \in \R^{2} : y = ax + b\}$. The reader may simply think that dyadic tubes are a convenient choice for $\delta$-discretising the family of all $\delta$-tubes in $\R^{2}$.

\begin{definition}[Nice configuration]
    Fix $\delta\in 2^{-\mathbb{N}}$, $s\in [0,1]$, $C>0$,  $M\in\mathbb{N}$. We say that a pair $(\mathcal{P},\mathcal{T}) \subset \mathcal{D}_{\delta} \times \mathcal{T}^{\delta}$ s a \emph{$(\delta,s,C,M)$-nice configuration} if for every $p\in\mathcal{P}$ there exists a $(\delta,s,C)$-set $\mathcal{T}(p) \subset\mathcal{T}$ with $ |\mathcal{T}(p)| = M$ and such that $T \cap p \neq\emptyset$ for all $T\in\mathcal{T}(p)$.
\end{definition}

It is relevant to find lower bounds for $|\mathcal{T}|$, where $(\mathcal{P},\mathcal{T})$ is a nice configuration, and $\mathcal{P}$ satisfies a non-concentration condition. This will also be the case in Proposition \ref{prop:configurations}. Lemma \ref{cor:ShWa} right below is a technical tool designed for this task: \eqref{form23} gives a lower bound for $|\mathcal{T}|$, provided that one can obtain lower bounds separately for $|\mathcal{T}_{\mathbf{p}}|$ for various restricted and rescaled configurations of the form $(S_{\mathbf{p}}(\mathcal{P} \cap \mathbf{p}),\mathcal{T}_{\mathbf{p}})$. Here and below, $S_{\mathbf{p}}$ is the rescaling affine map which sends $\mathbf{p} \in \mathcal{D}_{\Delta}$ to $[0,1)^{2}$.

\begin{lemma}[Corollary 4.1 in \cite{ShmerkinWang22b}]\label{cor:ShWa} Fix $N \geq 2$ and a sequence $\{\Delta_{j}\}_{j = 0}^{N} \subset 2^{-\N}$ with
    \begin{displaymath} 0 < \delta = \Delta_{N} < \Delta_{N - 1} < \ldots < \Delta_{1} < \Delta_{0} = 1. \end{displaymath}
    Let $(\mathcal{P}_{0},\mathcal{T}_{0}) \subset \mathcal{D}_{\delta} \times \mathcal{T}^{\delta}$ be a $(\delta,s,C,M)$-nice configuration. Then, there exists a set $\mathcal{P} \subset \mathcal{P}_{0}$ such that the following properties hold:
    \begin{itemize}
        \item[(i)] $|\mathcal{D}_{\Delta_{j}}(\mathcal{P})| \approx |\mathcal{D}_{\Delta_{j}}(\mathcal{P}_{0})|$ and $|\mathcal{P} \cap \mathbf{p}| \approx |\mathcal{P}_{0} \cap \mathbf{p}|$, $1 \leq j \leq N$, $\mathbf{p} \in \mathcal{D}_{\Delta_{j}}(\mathcal{P})$.
        \item[(ii)] For every $0 \leq j \leq N - 1$ and $\mathbf{p} \in \mathcal{D}_{\Delta_{j}}(\mathcal{P})$, there exist numbers
            \begin{displaymath} C_{\mathbf{p}} \approx C \quad \text{and} \quad M_{\mathbf{p}} \geq 1, \end{displaymath}
            and a family of tubes $\mathcal{T}_{\mathbf{p}} \subset \mathcal{T}^{\Delta_{j + 1}/\Delta_{j}}$, such that
            \begin{displaymath}
                (S_{\mathbf{p}}(\mathcal{P} \cap \mathbf{p}),\mathcal{T}_{\mathbf{p}}) \text{ is a  }(\Delta_{j + 1}/\Delta_{j},s,C_{\mathbf{p}},M_{\mathbf{p}})\text{-nice configuration}.
            \end{displaymath}

    \end{itemize}
    Furthermore, the families $\mathcal{T}_{\mathbf{p}}$ can be chosen so that if $\mathbf{p}_{j} \in \mathcal{D}_{\Delta_{j}}(\mathcal{P})$ for $0 \leq j \leq N - 1$, then
    \begin{equation}\label{form23} \frac{|\mathcal{T}_{0}|}{M} \gtrapprox \prod_{j = 0}^{N - 1} \frac{|\mathcal{T}_{\mathbf{p}_{j}}|}{M_{\mathbf{p}_{j}}}. \end{equation}
    All the constants implicit in the $\approx$ notation are allowed to depend on $N$.
\end{lemma}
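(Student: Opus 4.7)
The plan is to prove this multi-scale regularisation lemma by iterated dyadic pigeonholing along the scale sequence $\{\Delta_j\}$, combined with rescaling analysis of the point-tube configuration. The whole point of the statement is to trade a subpolynomial loss (hidden in $\approx$) for a rigid uniform branching structure that interacts cleanly with the rescaling maps $S_{\mathbf{p}}$.

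First I would regularise the cube family. For each $1\le j\le N$ and each $\mathbf{p}\in\mathcal{D}_{\Delta_{j-1}}$ meeting $\mathcal{P}_0$, dyadically pigeonhole the quantity $|\mathcal{P}_0\cap\mathbf{p}|_{\Delta_j}$ so that it takes (up to $\lessapprox 1$ factors) a common value $R_j(\mathbf{p})$ — this is the standard construction of a $\{\Delta_j\}$-uniform subset. Iterating from the top scale $\Delta_0 = 1$ downwards, and passing at each step to a further subset, I obtain a subset $\mathcal{P}\subset\mathcal{P}_0$ satisfying (i), losing only polylogarithmic factors bundled into $\approx$. Crucially, this is done only on the cube side; the tube family $\mathcal{T}_0$ is left untouched.

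Next I would construct the tube families $\mathcal{T}_{\mathbf{p}}$. For each $\mathbf{p}\in\mathcal{D}_{\Delta_j}(\mathcal{P})$ with $0 \le j \le N-1$, consider the collection $\bigcup_{p\in\mathcal{P}\cap\mathbf{p}} \mathcal{T}_0(p)$ of $\delta$-tubes through $\mathbf{p}$. Rescale by $S_{\mathbf{p}}$: each incident $\delta$-tube becomes a $(\delta/\Delta_j)$-tube in $[0,1)^2$, and then I coarsen by taking the dyadic $(\Delta_{j+1}/\Delta_j)$-tube that contains it. After one more round of pigeonholing to equalise, for each $S_{\mathbf{p}}(p)$, the number $M_{\mathbf{p}}$ of coarsened tubes through it, I define $\mathcal{T}_{\mathbf{p}}$ to be the resulting collection. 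The non-concentration property of $\mathcal{T}_{\mathbf{p}}$ as a $(\Delta_{j+1}/\Delta_j, s, C_{\mathbf{p}})$-set follows from the assumed $(\delta,s,C)$-set property of each $\mathcal{T}_0(p)$: non-concentration at the finer scale $\delta$, taken across a family of $p$'s sitting in $\mathbf{p}$ and pushed to the coarser scale $\Delta_{j+1}/\Delta_j$, transfers up to a constant multiplicative loss absorbed in $C_{\mathbf{p}}\approx C$. Here the uniformity established in step one is what guarantees that the coarsened tube counts are essentially the same above every $S_{\mathbf{p}}(p)$.

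The main obstacle is the multi-scale product inequality \eqref{form23}. Fix an admissible chain $\mathbf{p}_0\supset\mathbf{p}_1\supset\cdots\supset\mathbf{p}_{N-1}$. A single $\delta$-tube $T_0\in\mathcal{T}_0$ through $\mathbf{p}_{N-1}$ gives rise, after rescaling by $S_{\mathbf{p}_j}$ and coarsening, to a unique element $T_{j}\in\mathcal{T}_{\mathbf{p}_j}$ for each $j$. Conversely, I would count tubes top-down: among the $|\mathcal{T}_{\mathbf{p}_0}|$ tubes at the top scale, each of multiplicity $M_{\mathbf{p}_0}$ above any $\Delta_1$-subcube of $\mathbf{p}_0$, the uniform branching structure ensures that the tube fibres over $\mathbf{p}_1$ refine into $|\mathcal{T}_{\mathbf{p}_1}|/M_{\mathbf{p}_1}$ essentially disjoint classes after passing to scale $\Delta_2/\Delta_1$, and so on. Iterating along the chain and telescoping, I obtain (up to $\approx$ factors) at least $\prod_{j=0}^{N-1}|\mathcal{T}_{\mathbf{p}_j}|/M_{\mathbf{p}_j}$ distinct $\delta$-tubes in $\mathcal{T}_0$; dividing by the multiplicity $M$ of $\mathcal{T}_0$ yields \eqref{form23}. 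The delicate point — the one I expect to be hardest to pin down cleanly — is organising the tube families $\mathcal{T}_{\mathbf{p}}$ in a chain-compatible way so that one and the same $\delta$-tube is neither over-counted at intermediate scales nor lost to pigeonhole refinements; this is exactly what the top-down uniformisation of $\mathcal{P}$ in the first step is designed to enable.
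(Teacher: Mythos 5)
You should first be aware that the paper offers no proof of this statement: it is imported verbatim as Corollary 4.1 of \cite{ShmerkinWang22b}, whose proof runs by induction on $N$ from a two-scale refinement lemma (going back to \cite{OS23}) in which \emph{both} the square family and the tube families $\mathcal{T}_0(p)$ are pigeonholed. Measured against that, your outline has the right flavour (multi-scale pigeonholing, rescaling by $S_{\mathbf{p}}$, multiplicative counting), but two of its steps do not deliver what is claimed. First, your Step 1 does not give (i). The standard mass-preserving uniformisation you invoke keeps $\gtrapprox|\mathcal{P}_0|$ of the $\delta$-squares but selects one branching class per scale, and this can destroy the square counts at intermediate scales: if at some scale a few heavy squares carry almost all of the mass while most squares are light, pigeonholing by mass discards almost all of $\mathcal{D}_{\Delta_j}(\mathcal{P}_0)$, while pigeonholing by square count forces you to gut the interiors of the retained heavy squares, violating $|\mathcal{P}\cap\mathbf{p}|\approx|\mathcal{P}_0\cap\mathbf{p}|$. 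Worse, (i) is in general \emph{incompatible} with uniformity (take half the $\Delta_1$-squares containing $\delta^{-1/2}$ points and half containing one point: any $\mathcal{P}$ satisfying (i) must retain both populations with their counts, hence is not uniform), so you cannot both secure (i) and then lean "crucially", as you put it, on uniform branching in Steps 2 and 3. The actual proof achieves (i) precisely because its refinements are driven by the tube statistics, not by uniformising $\mathcal{P}_0$.

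Second, leaving $\mathcal{T}_0$ untouched is not enough. Under the normalised Definition \ref{def:deltaSSet}, the coarsening of a $(\delta,s,C)$-set $\mathcal{T}_0(p)$ at relative scale $\Delta_{j+1}/\Delta_j$ is a $(\Delta_{j+1}/\Delta_j,s,\approx C)$-set only when the number of fine tubes per coarse tube is roughly constant; arranging this requires an additional dyadic pigeonholing (and discarding of the unpopular part) of each family $\mathcal{T}_0(p)$, not merely equalising the per-square coarse counts $M_{\mathbf{p}}$ across squares. The same multiplicity bookkeeping is exactly what makes your chain map $T\mapsto (T^{(0)},\ldots,T^{(N-1)})$ count correctly with the normalisations $M$ and $M_{\mathbf{p}_j}$ in \eqref{form23}; as written, the "telescoping into essentially disjoint classes" is an assertion of the conclusion rather than an argument, and it is precisely the part of the Shmerkin--Wang proof that takes real work (their two-scale lemma plus induction on $N$). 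So the proposal identifies the right ingredients but leaves genuine gaps at both conclusions (i) and \eqref{form23}; as the statement is an external black box in this paper, the honest course is to cite \cite{ShmerkinWang22b} rather than to re-derive it along these lines.
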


Here is the main result in \cite{RenWang23}:

\begin{thm}[Theorem 4.1 in \cite{RenWang23}]\label{thm:renWang23}
    For every \(s \in (0,1]\), $t \in [0,2]$, and $\epsilon > 0$, there exist $\eta = \eta(\epsilon,s,t) > 0$ and $\delta_{0} = \delta_{0}(\epsilon,s,t) > 0$ such that the following holds for all $\delta \in (0,\delta_{0}]$.
    Let $(\mathcal{P},\mathcal{T})$ be a $(\delta,s,\delta^{-\eta},M)$-nice configuration, where $\mathcal{P} \subset \mathcal{D}_{\delta}$ is a non-empty $(\delta,t,\delta^{-\eta})$-set. Then,
    \begin{equation}\label{form24} \frac{|\mathcal{T}|}{M} \geq \delta^{-\min\{t,(s + t)/2,1\} + \epsilon}. \end{equation} \end{thm}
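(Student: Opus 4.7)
The plan is to establish this tube-counting estimate through a multi-scale analysis that reduces the global problem to many two-scale subproblems, then apply at each scale an expansion estimate rooted in Proposition~\ref{prop:expansion}. The strategy follows the general philosophy of \cite{OrponenShmerkin23}, upgraded from the weaker $s + ct$ bound to the sharp exponent $\phi(t) := \min\{t, (s+t)/2, 1\}$ through careful bookkeeping of branching at many scales.

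The first step is to pass to uniform subconfigurations: by repeated application of the uniformization of \cite[Lemma 3.6]{MR3919361}, we may lose factors of $\delta^{O(\eta)}$ in $|\mathcal{P}|$ and in $M$ to assume that $\mathcal{P}$ is $\{2^{-Dj}\}_{j=1}^m$-uniform for some fixed $D = D(\epsilon)$, with an associated $2$-Lipschitz branching function $f_{\mathcal{P}} : [0,1] \to [0,2]$. The hypothesis $\mathcal{P} \in (\delta, t, \delta^{-\eta})$ forces $f_{\mathcal{P}}(1) - f_{\mathcal{P}}(u) \geq t(1-u) - O(\eta)$ for all $u$. I would similarly uniformize the families $\mathcal{T}(p)$ so that, after losing $\delta^{O(\eta)}$ factors in $M$, each has a uniform branching function $g_p$ satisfying $g_p(1)-g_p(u) \ge s(1-u) - O(\eta)$.

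Next, I invoke Lemma~\ref{cor:ShWa} with a sequence of scales $\Delta_j = 2^{-Djm/N}$ so that $\log(\Delta_j/\Delta_{j+1})/\log(1/\delta) = 1/N$, with $N = N(\epsilon)$ chosen large. For each $\mathbf{p} \in \mathcal{D}_{\Delta_j}(\mathcal{P})$, the rescaled configuration $(S_{\mathbf{p}}(\mathcal{P} \cap \mathbf{p}), \mathcal{T}_{\mathbf{p}})$ is a $(\delta', s, C_{\mathbf{p}}, M_{\mathbf{p}})$-nice configuration (with $\delta' := \Delta_{j+1}/\Delta_j$), and uniformity guarantees that $S_{\mathbf{p}}(\mathcal{P} \cap \mathbf{p})$ is a $(\delta', t_j, O((\delta')^{-\eta}))$-set for a local dimension $t_j \in [0,2]$ such that $\frac{1}{N}\sum_j t_j = f_{\mathcal{P}}(1) \geq t - O(\eta)$. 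The heart of the argument is then a two-scale estimate asserting
\begin{equation*}
\frac{|\mathcal{T}_{\mathbf{p}}|}{M_{\mathbf{p}}} \gtrapprox (\delta')^{-\phi(t_j) + \epsilon'}
\end{equation*}
for some $\epsilon' = \epsilon'(s,t,\epsilon) > 0$. This is obtained by dualizing the nice configuration via the point-line duality $\mathbf{D}$ into a sum-product configuration on $\mu_1 \times \mu_2 \times \nu_1 \times \nu_2$, where the $\mu_i, \nu_i$ are $(s,O((\delta')^{-\eta}))$- and $(t_j, O((\delta')^{-\eta}))$-Frostman measures coming from $\mathcal{T}(p)$ and $\mathcal{P} \cap \mathbf{p}$ respectively, and then applying Proposition~\ref{prop:expansion} to extract the $\epsilon'$-improvement over the trivial Loomis-Whitney bound.

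Finally I combine the two-scale bounds via the product formula \eqref{form23}:
\begin{equation*}
\frac{|\mathcal{T}|}{M} \gtrapprox \prod_{j=0}^{N-1} (\delta')^{-\phi(t_j) + \epsilon'} = \delta^{-\frac{1}{N}\sum_j \phi(t_j) + \epsilon'}.
\end{equation*}
Since $\phi$ is the minimum of three linear functions, it is concave on $[0,2]$; Jensen's inequality yields $\frac{1}{N}\sum_j \phi(t_j) \geq \phi\!\left(\frac{1}{N}\sum_j t_j\right) \geq \phi(t) - O(\eta)$, and choosing $\eta \ll \epsilon'/N$ gives the desired bound \eqref{form24}. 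The main obstacle is the two-scale expansion estimate: routing the nice-configuration structure into Proposition~\ref{prop:expansion} requires a clean dualization between tubes-through-squares and sum-product quadruples, and the non-concentration hypotheses on the rescaled square/tube families must be tracked carefully through the rescaling maps $S_{\mathbf{p}}$ in order to invoke Proposition~\ref{prop:expansion} with the same parameters at each of the $N$ scales.
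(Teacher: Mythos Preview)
The paper does not prove this statement: Theorem~\ref{thm:renWang23} is quoted verbatim from Ren--Wang \cite[Theorem~4.1]{RenWang23} and used as a black box in the proof of Proposition~\ref{prop:configurations}. There is therefore no ``paper's own proof'' to compare your proposal against.

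That said, your sketch contains a fatal error. You write that since $\phi(t) = \min\{t,(s+t)/2,1\}$ is concave, Jensen's inequality gives
\[
\frac{1}{N}\sum_j \phi(t_j) \;\geq\; \phi\!\left(\frac{1}{N}\sum_j t_j\right).
\]
This is backwards: for a concave function, Jensen yields $\frac{1}{N}\sum_j \phi(t_j) \leq \phi\bigl(\tfrac{1}{N}\sum_j t_j\bigr)$. The inequality you need therefore fails in general. Concretely, suppose half the local dimensions $t_j$ equal $0$ and the other half equal $2$; then $\tfrac{1}{N}\sum t_j = 1$, so $\phi(1)\ge (s+1)/2$, but $\tfrac{1}{N}\sum \phi(t_j) = \tfrac{1}{2}(0+1) = \tfrac{1}{2}$, which is strictly smaller whenever $s>0$. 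Thus a naive multiscale product of single-scale bounds cannot recover the sharp exponent; this is precisely why the Ren--Wang argument is substantially more delicate than the scheme you outline, and why the earlier papers \cite{OrponenShmerkin23,OS23} only obtained weaker exponents via multiscale arguments of this type. Your proposed two-scale input (Proposition~\ref{prop:expansion}) is also not obviously adequate: it delivers an exponent of the form $\min\{s+t,1\}$ in a sum-product setting, not the Furstenberg exponent $\min\{t,(s+t)/2,1\}$, and the ``clean dualization'' you allude to would itself need justification.
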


The proposition below has a similar flavour, but the hypothesis that $\mathcal{P}$ is a $(\delta,t,\delta^{-\eta})$-set has been replaced by the hypothesis that $\mathcal{P}$ is a Katz-Tao $(\delta,2s,K)$-set. In particular, note that this hypothesis gives no \emph{a priori} lower bound on the cardinality $|\mathcal{P}|$.

\begin{proposition} \label{prop:configurations}
    Fix $s \in (0,1)$, $0<t< 2\min\{s,1-s\} \leq 1$ and $C,K,M \ge 1$. Then the following holds for all $\delta \in 2^{-\N}$ with $0 < \delta \leq \delta_{0}(C,s,t)$. Let $\mathcal{P} \subset \mathcal{D}_{\delta}$ be a Katz-Tao $(\delta,2s,K)$-set, and let $\mathcal{T} \subset \mathcal{T}^{\delta}$ be a family of tubes such that $(\mathcal{P},\mathcal{T})$ is a $(\delta,t,C,M)$-nice configuration. Then,
    \begin{displaymath} \frac{|\mathcal{T}|}{M} \gtrapprox K^{-\frac{t}{4s(1-s)}}\cdot |\mathcal{P}|^{\frac{1}{2}+\frac{t}{4s}}, \end{displaymath}
    where the implicit constant may depend on $C$.
\end{proposition}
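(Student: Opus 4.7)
The plan is a multi-scale application of the Ren--Wang theorem (Theorem~\ref{thm:renWang23}) via the sub-configuration decomposition of Lemma~\ref{cor:ShWa}, with the Katz--Tao hypothesis used to control the overall loss and extract the $K^{-t/(4s(1-s))}$ factor.

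First I would pass to a $\{2^{-Dj}\}_{j=1}^{m}$-uniform subset of $\mathcal{P}$ by standard pigeonholing (taking $D$ large enough to ensure uniformity at many intermediate scales), with associated branching function $f\colon[0,1]\to[0,2]$ and $\alpha := f(1)$, so that $|\mathcal{P}|_{\delta} = \delta^{-\alpha}$. The Katz--Tao hypothesis is equivalent to $f(1)-f(u)\le 2s(1-u)+\eta$ for every $u\in[0,1]$, where $\eta:=\log K/\log(1/\delta)$. I would then fix a sequence of intermediate scales $0=u_{0}<u_{1}<\cdots<u_{N}=1$, $\Delta_{j}:=\delta^{u_{j}}$, and apply Lemma~\ref{cor:ShWa} to obtain nested sub-configurations: for a generic chain of cubes $\mathbf{p}_{j}\in\mathcal{D}_{\Delta_{j}}(\mathcal{P})$, each rescaled pair $(S_{\mathbf{p}_{j}}(\mathcal{P}\cap\mathbf{p}_{j}),\mathcal{T}_{\mathbf{p}_{j}})$ is a $(\delta^{u_{j+1}-u_{j}},t,C_{j},M_{j})$-nice configuration. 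By the uniformity of $\mathcal{P}$, the rescaled point set is a $(\delta^{u_{j+1}-u_{j}},\beta_{j},\delta^{-O(\eta)})$-set of constant branching rate $\beta_{j}:=(f(u_{j+1})-f(u_{j}))/(u_{j+1}-u_{j})$. Applying Theorem~\ref{thm:renWang23} at each scale with tube dimension $t$ and point dimension $\beta_{j}$ yields
\[
    \frac{|\mathcal{T}_{\mathbf{p}_{j}}|}{M_{j}}\gtrapprox \delta^{-(u_{j+1}-u_{j})g(\beta_{j})},\qquad g(\beta):=\min\{\beta,\tfrac{\beta+t}{2},1\},
\]
and multiplying over $j$ gives $|\mathcal{T}|/M\gtrapprox \delta^{-E}$ with $E:=\sum_{j}(u_{j+1}-u_{j})g(\beta_{j})$.

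The final step is an algebraic optimisation of $E$. Setting $\phi(\beta):=\beta(2s+t)/(4s)$, a direct case analysis shows $g(\beta)\ge \phi(\beta)$ on $[0,2s]$ (using $t\le 2s$), while on $(2s,2]$ the discrepancy $h(\beta):=\phi(\beta)-g(\beta)$ is nonnegative and convex, and is bounded by the chord $h(\beta)\le t(\beta-2s)/(4s(1-s))$ joining $h(2s)=0$ and $h(2)=t/(2s)$. Combined with the linearity of $\phi$ and the identity $\sum_{j}(u_{j+1}-u_{j})\beta_{j}=\alpha$, this produces
\[
    E \;\ge\; \phi(\alpha) - \frac{t}{4s(1-s)} \sum_{j}(u_{j+1}-u_{j})(\beta_{j}-2s)_{+}.
\]
The localised Katz--Tao constraints then force the remaining sum to be at most $\eta$ (the extremal configuration being $\beta_{j}=2$ on an interval of length $\eta/(2(1-s))$ and $\beta_{j}=2s$ elsewhere, which saturates both the global and local bounds). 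Substituting gives $E\ge \phi(\alpha)-\eta t/(4s(1-s))$; exponentiating, this is equivalent to $|\mathcal{T}|/M\gtrapprox K^{-t/(4s(1-s))}|\mathcal{P}|^{1/2+t/(4s)}$, as claimed.

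The main technical obstacle lies in the last bound on the branching excess: the global Katz--Tao condition alone does not suffice (as one can design $f$ that spreads the excess across the whole interval $[0,1]$), so the intermediate scales $u_{j}$ must be chosen to match the breakpoints of $f$ where the local Katz--Tao constraint is saturated. In particular, one may need to pair the multi-scale argument with a single-scale application of Theorem~\ref{thm:renWang23} at an appropriately rescaled sub-configuration, exploiting that $\mathcal{P}$ is always a $(\delta,\alpha,\delta^{-O((2s-\alpha)_{+}+\eta)})$-set, to handle the degenerate case where $\mathcal{P}$ exhibits extreme concentration at a single scale and the pure multi-scale estimate would otherwise leave a gap.
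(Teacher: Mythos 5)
Your skeleton (uniformize $\mathcal{P}$, run the multiscale decomposition of Lemma~\ref{cor:ShWa}, apply Theorem~\ref{thm:renWang23} on each rescaled configuration, then optimise using the Katz--Tao constraint) is exactly the paper's, and your algebraic reduction is sound: the pointwise bound $g(\beta)\ge \phi(\beta)-\tfrac{t}{4s(1-s)}(\beta-2s)_{+}$ with $\phi(\beta)=\beta\bigl(\tfrac12+\tfrac{t}{4s}\bigr)$ is correct and is an equivalent repackaging of the paper's bookkeeping at the thresholds $a$ (slope $t$) and $b$ (slope $2-t$). However, the two steps you need to feed into it are not established, and as stated both are false.

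First, for an arbitrary choice of intermediate scales, the rescaled set $S_{\mathbf{p}_j}(\mathcal{P}\cap\mathbf{p}_j)$ is \emph{not} a $(\delta^{u_{j+1}-u_j},\beta_j,\delta^{-O(\eta)})$-set with $\beta_j$ the \emph{average} branching rate: if $f$ is flat and then steep inside $[u_j,u_{j+1}]$, the Frostman constant at exponent $\beta_j$ is a genuine negative power of $\delta$ unrelated to $\eta$, and Theorem~\ref{thm:renWang23} (which needs constant $\le\delta_j^{-\eta_{RW}}$ for its own tiny $\eta_{RW}$) cannot be applied with exponent $\beta_j$. The Katz--Tao hypothesis does not preclude such interior behaviour. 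Second, the excess bound $\sum_j(u_{j+1}-u_j)(\beta_j-2s)_{+}\le\eta$ fails for general scales even with $\eta=0$: take $s=\tfrac12$ and $f$ of slope $2$ on $[0,\tfrac12]$ and slope $0$ on $[\tfrac12,1]$, which is Katz--Tao with $\eta=0$, yet with scales $\{0,\tfrac12,1\}$ the excess is $\tfrac12$. You acknowledge this obstacle, but the proposed remedy ("choose the $u_j$ at breakpoints where the local Katz--Tao constraint is saturated", plus an unspecified auxiliary single-scale application) is a heuristic, not a proof; local Katz--Tao information on an interior interval controls $f(u_{j+1})-f(w)$ only up to the slack of the tail $[u_{j+1},1]$, so it does not bound the positive part of the per-interval excess. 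The missing idea, which resolves both points simultaneously, is the decomposition of the branching function into intervals on which $f$ is $s_j$-\emph{superlinear} with \emph{increasing} slopes and $\sum_j s_j(a_{j+1}-a_j)\ge f(1)-\epsilon$ (\cite[Lemma 5.21]{ShmerkinWang22}, as used in the paper): superlinearity (not the average slope) is what makes each rescaled set a Frostman set with exponent $s_j$ and negligible constant, and monotonicity forces the intervals with $s_j>2s$ to form a terminal block $[a^{*},1]$, so their total excess is at most $f(1)-f(a^{*})-2s(1-a^{*})\le\eta$ by a \emph{single} application of the Katz--Tao constraint (the paper runs this via the two constraints at $a$ and $b$). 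Without this rearrangement your final inequality does not follow, so the proof as proposed has a genuine gap at its quantitatively decisive step.
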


\begin{remark} One should think that the constant $K$ is very large, in fact a function of $\delta^{-1}$ in our application, whereas $C \geq 1$ is just an absolute constant (in fact, in our application it will depend on the implicit $\epsilon$). \end{remark}

\begin{proof}[Proof of Proposition \ref{prop:configurations}]
    Without loss of generality, $\mathcal{P}$ is $\{2^{-Di}\}_{i = 1}^{m}$-uniform, and $\delta = 2^{-Dm}$. Let $f$ be the branching function of $\mathcal{P}$. Given an interval $[a,b] \subset [0,1]$, and $s \geq 0$, we say that $(f,a,b)$ is \emph{$s$-superlinear if $f(u) - f(a) \geq s(u - a)$ for all $u \in [a,b]$.} Fix $\epsilon>0$. Using \cite[Lemma 5.21]{ShmerkinWang22}, decompose $[0,1]$ into intervals $[a_{j},a_{j+1}]$, $0 \leq j \leq N - 1$, such that $(f,a_{j},a_{j +1})$ is $s_{j}$-superlinear with $s_{j}$ increasing, $a_{j + 1} - a_{j} \geq \tau(\epsilon) > 0$, and
    \begin{equation} \label{eq:superlinear-decomposition}
        f(1)\ge  \sum_{j = 0}^{N - 1} s_i (a_{j +1}-a_{j}) \ge  f(1)-\epsilon.
    \end{equation}
    For the sake of clarity, we assume that $\epsilon=0$ in this equation; the interested reader can check that our argument also yields the limiting case as $\epsilon\downarrow 0$.

    Write $\Delta_{j} := \delta^{a_{j}} = 2^{-Dma_{j}}$ for $j \in \{0,\ldots,N\}$. Noting that $a_{0} = 0$ and $a_{N} = 1$,
    \begin{displaymath} \delta = 2^{-Dm} = \Delta_{N} < \ldots < \Delta_{0} = 1. \end{displaymath}
    We may therefore apply Corollary \ref{cor:ShWa} to the pair $(\mathcal{P}_{0},\mathcal{T}_{0}) := (\mathcal{P},\mathcal{T})$ and the scale sequence $\{\Delta_{j}\}_{j = 0}^{N}$. This leads to a refinement $\mathcal{P}' \subset \mathcal{P}$, and the families
    \begin{displaymath} \mathcal{T}_{\mathbf{p}} \subset \mathcal{T}^{\Delta_{j + 1}/\Delta_{j}}, \qquad \mathbf{p} \in \mathcal{D}_{\Delta_{j}}(\mathcal{P}'), \, 0 \leq j \leq N - 1. \end{displaymath}
    According to \eqref{form23},
    \begin{equation}\label{form25} \frac{|\mathcal{T}|}{M} \gtrapprox \prod_{j = 0}^{N - 1} \frac{|\mathcal{T}_{\mathbf{p}_{j}}|}{M_{\mathbf{p}_{j}}}, \qquad \mathbf{p}_{j} \in \mathcal{D}_{\Delta_{j}}(\mathcal{P}'). \end{equation}
    The plan of the remainder of the proof is to estimate each of the factors $|\mathcal{T}_{\mathbf{p}_{j}}|/M_{\mathbf{p}_{j}}$ separately, and finally multiply the results to obtain a lower bound for $|\mathcal{T}|/M$.

    Let $a = \min\{ a_{j} : s_{j} \ge t\}$ and $b=\min\{a_{j} : s_{j} \ge 2-t\}$, assuming that these exist. If $s_{j} < t$ for all $j$, we simply declare $a := 1$, and if $s_{j} < 2 - t$ for all $j$, we set $b := 1$.  Note that $t \leq 2 - t$, so $b \geq a$.

    Abbreviate $\delta_{j} := \Delta_{j + 1}/\Delta_{j}$ for $0 \leq j \leq N - 1$.
    Recall that $a_{j + 1} - a_{j} \geq \tau(\epsilon)$, so $\delta_{j} = \delta^{a_{j + 1} - a_{j}} \leq \delta^{\tau(\epsilon)}$.
    The information that $(f,a_{j},a_{j + 1})$ is $s_{j}$-superlinear translates to the property that
    \begin{displaymath}
        S_{\mathbf{p}}(\mathcal{P} \cap \mathbf{p}) \subset \mathcal{D}_{\delta_{j}}
    \end{displaymath}
    is a $(\delta_{j},s_{j},C)$-set for every $\mathbf{p} \in \mathcal{D}_{\Delta_{j}}(\mathcal{P})$, where $C > 0$ is a constant that depends only on $D$ in the definition of uniformity, which in turn has to be taken large enough in terms of $\epsilon$. See \cite[Lemma 8.3]{OS23}. Does the same remain true for the refinement $\mathcal{P}' \subset \mathcal{P}$? Almost: Lemma \ref{cor:ShWa}(i) guarantees that
    \begin{displaymath} |\mathcal{P}' \cap \mathbf{p}| \approx |\mathcal{P} \cap \mathbf{p}|, \qquad \mathbf{p} \in \mathcal{D}_{\Delta_{j}}(\mathcal{P}'). \end{displaymath}
    It then easily follows (or see \cite[Corollary 2.19]{OrponenShmerkin23}) from the $(\delta_{j},s_{j},C)$-set property of $S_{\mathbf{p}}(\mathcal{P} \cap \mathbf{p})$ that $S_{\mathbf{p}}(\mathcal{P}' \cap \mathbf{p})$ is a $(\delta_{j},s_{j},\approx 1)$-set. But since $\delta_{j} \leq \delta^{\tau(\epsilon)}$, logarithmic constants in $\delta$ are also logarithmic in $\delta_{j}$.
    In particular, for any given $\eta > 0$, and provided that $\delta > 0$ is sufficiently small, the sets $S_{\mathbf{p}}(\mathcal{P}' \cap \mathbf{p})$ are $(\delta_{j},s_{j},\delta_{j}^{-\eta})$-sets for $\mathbf{p} \in \mathcal{D}_{\Delta_{j}}(\mathcal{P}')$.

    We are then in a position to apply Theorem \ref{thm:renWang23} to the $(\delta_{j},s_{j},C_{\mathbf{p}_{j}},M_{\mathbf{p}_{j}})$-nice configurations $(S_{\mathbf{p}_{j}}(\mathcal{P}' \cap \mathbf{p}_{j}),\mathcal{T}_{\mathbf{p}_{j}})$, where $\mathbf{p}_{j} \in \mathcal{D}_{\Delta_{j}}(\mathcal{P}')$ is arbitrary, and $C_{\mathbf{p}_{j}} \approx C$ by Lemma \ref{cor:ShWa}(ii). In particular, $C_{\mathbf{p}_{j}} \leq \delta_{j}^{-\eta}$ for $\delta > 0$ sufficiently small. Taking the "$\min$" in \eqref{form24} into account, the conclusion of Theorem \ref{thm:renWang23} is that
    \begin{align*}
         & \frac{|\mathcal{T}_{\mathbf{p}_{j}}|}{M_{\mathbf{p}_{j}}} \gtrapprox \delta_{j}^{-s_{j}} = \delta^{-s_j(a_{j+1}-a_{j})},                      & a_j\in [0,a], \\
         & \frac{|\mathcal{T}_{\mathbf{p}_{j}}|}{M_{\mathbf{p}_{j}}} \gtrapprox \delta_{j}^{(s_{j} + t)/2} = \delta^{-\frac{s_{j}+t}{2}(a_{j+1}-a_{j})}, & a_j\in [a,b], \\
         & \frac{|\mathcal{T}_{\mathbf{p}_{j}}|}{M_{\mathbf{p}_{j}}} \gtrapprox \delta_{j}^{-1} = \delta^{-(a_{j+1}-a_j)},                               & a_j\in [b,1].\end{align*}
    Plugging this information into the lower bound \eqref{form25}, we find
    \begin{equation*}
        \log_{1/\delta} (|\mathcal{T}|/M) \ge  f(a)+ \frac{f(b)-f(a)}{2} + \frac{(b-a)t}{2} + 1-b.
    \end{equation*}
    We get
    \begin{equation} \label{eq:estimate-1}
        \begin{split}
            \log_{1/\delta}(|\mathcal{T}|/M) &\ge \frac{f(b)+f(a)}{2} +\frac{(b-a)t}{2} + 1-b \\
            & =  \frac{f(1)+f(a)}{2} +\frac{(1-a)t}{2}  + (1-b)(1-\frac{t}{2}) - \frac{f(1)-f(b)}{2}.
        \end{split}
    \end{equation}
    Write $K=:\delta^{-u}$. The assumption that $\mathcal{P}$ is a Katz-Tao $(\delta,2s,K)$-set translates to
    $f(1)-f(a) \le 2s(1-a)+u$. Since also $t\le 2s$ by assumption, we have
    \begin{equation} \label{eq:estimate-2}
        f(a)+(1-a)t \ge f(a)+\frac{t(f(1)-f(a)-u)}{2s}  \ge \frac{t f(1)}{2s}-\frac{tu}{2s}.
    \end{equation}

    We assume that $b < 1$. If $b = 1$, some of the terms in \eqref{eq:estimate-1} vanish, and a stronger form of \eqref{form22} already follows from \eqref{eq:estimate-2}. Write $\sigma :=\frac{f(1)-f(b)}{1-b}\in [2-t,2] \subset (2s,2]$.
    Using the Katz-Tao assumption on $\mathcal{P}$ again, we have
    \[
        \sigma(1-b) = f(1)-f(b) \le u + 2s(1-b) \Longrightarrow 1-b \le \frac{u}{\sigma-2s},
    \]
    and therefore
    \begin{equation} \label{eq:estimate-3}
        (1-b)(1-\frac{t}{2}) - \frac{f(1)-f(b)}{2}
        = \frac{1}{2}(1-b)(2-t-\sigma)
        \ge -\frac{u(t+\sigma-2)}{2(\sigma-2s)} \ge -\frac{tu}{4(1-s)},
    \end{equation}
    using that $\sigma \in [2(1-s),2]$ and the assumption $2s<2-t$ for the last inequality.

    Combining \eqref{eq:estimate-1}, \eqref{eq:estimate-2} and \eqref{eq:estimate-3}, we conclude that
    \begin{equation}\label{form22}
        \log_{1/\delta} (|\mathcal{T}|/M) \ge (1+\frac{t}{2s})\frac{f(1)}{2} - \frac{tu}{4s(1-s)},
    \end{equation}
    as claimed. (To be more precise, we should add a $-O_{s,t}(\epsilon)$ in the right-hand side of \eqref{form22}, to account from the $\varepsilon$ in \eqref{eq:superlinear-decomposition}.)
\end{proof}

We will only use the conclusion of Proposition \ref{prop:configurations} via the following corollary concerning projections:

\begin{cor} \label{lem:projections}
    Fix $s \in (0,1)$, $0<t< 2\min\{s,1-s\}$ and $C,K\ge 1$. Let $X\subset [0,1]^2$ be a Katz-Tao $(\delta,2s,K)$-set, and let $Y\subset [1,2]$ be a $\delta$-separated $(\delta,t,C)$-set.
    Then
    \[
        |\pi_{y} (X)|_{\delta} \gtrapprox K^{-\frac{t}{4s(1-s)}}\cdot |X|_{\delta}^{\frac{1}{2}+\frac{t}{4s}},
    \]
    for at least $|Y|/2$ values of $y\in Y$.
\end{cor}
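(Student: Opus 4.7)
The plan is to deduce Corollary \ref{lem:projections} from Proposition \ref{prop:configurations} via a point-tube duality, converting the question on projections into a nice-configuration problem, and then arguing by contradiction. First I will set up the configuration: let $\mathcal{P}$ be the family of dyadic $\delta$-squares meeting $X$, so that $\mathcal{P}$ inherits the Katz-Tao $(\delta,2s,O(K))$-set property of $X$ and $|\mathcal{P}| \sim |X|_\delta$. For each $p \in \mathcal{P}$ I fix a representative $c_p \in p$; for each $y \in Y$, let $T_y(p) \in \mathcal{T}^\delta$ be the dyadic $\delta$-tube corresponding, under the duality $\mathbf{D}$ recalled in the paper, to the dyadic $\delta$-square containing the dual point
\[
(y,\pi_y(c_p)) = (y,(c_p)_1 - y(c_p)_2) \in \R^2.
\]
I then set $\mathcal{T}(p) := \{T_y(p) : y \in Y\}$ and $\mathcal{T} := \bigcup_{p \in \mathcal{P}} \mathcal{T}(p)$. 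Since the map $y \mapsto (y,\pi_y(c_p))$ is affine and bi-Lipschitz with absolute constants on $y \in [1,2]$ and $c_p \in [0,1]^2$, the set $\mathcal{T}(p)$ inherits from $Y$ the structure of a $(\delta,t,O(C))$-set of cardinality $\sim |Y|$, so $(\mathcal{P},\mathcal{T})$ is a $(\delta,t,O(C),O(|Y|))$-nice configuration. The crucial incidence observation is that for each fixed $y \in Y$, the number of distinct tubes of slope $y$ in $\mathcal{T}$ is $\sim |\pi_y(X)|_\delta$, since two tubes $T_y(p_1),T_y(p_2)$ agree if and only if $\pi_y(c_{p_1})$ and $\pi_y(c_{p_2})$ coincide up to scale $\delta$.

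I will then argue by contradiction. Assume that for a suitable small constant $c_0 > 0$ (to be fixed at the end) the ``bad'' set
\[
Y_b := \left\{y \in Y : |\pi_y(X)|_\delta < c_0\, K^{-t/(4s(1-s))}\, |X|_\delta^{1/2 + t/(4s)}\right\}
\]
satisfies $|Y_b| > |Y|/2$. Being a more-than-half subset of the $(\delta,t,C)$-set $Y$, the set $Y_b$ is automatically a $(\delta,t,2C)$-set, so the restricted pair $(\mathcal{P},\mathcal{T}')$ with $\mathcal{T}' := \bigcup_{p \in \mathcal{P}} \{T_y(p) : y \in Y_b\}$ is again $(\delta,t,O(C),O(|Y_b|))$-nice. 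Proposition \ref{prop:configurations} then yields
\[
\frac{|\mathcal{T}'|}{|Y_b|} \gtrapprox K^{-t/(4s(1-s))}\, |X|_\delta^{1/2 + t/(4s)},
\]
while the incidence observation provides the upper bound $|\mathcal{T}'| \lesssim \sum_{y \in Y_b} |\pi_y(X)|_\delta < c_0 |Y_b| K^{-t/(4s(1-s))} |X|_\delta^{1/2 + t/(4s)}$. Choosing $c_0$ sufficiently small (depending on $C$ and the implicit constants of $\gtrapprox$, $\lesssim$) produces the contradiction that completes the proof.

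I expect the main obstacle to be a clean verification that this duality setup really produces a nice configuration at the dyadic level: one must ensure that replacing the continuous dual points $(y,\pi_y(c_p))$ with the dyadic $\delta$-squares containing them, and the choice of representative $c_p \in p$, do not spoil either the $(\delta,t,O(C))$-set property of $\mathcal{T}(p)$ or the incidence count $|\{T_y(p) : p \in \mathcal{P}\}| \sim |\pi_y(X)|_\delta$. Both points come down to the fact that $y \mapsto (y,\pi_y(c_p))$ is bi-Lipschitz with universal constants on the relevant ranges, so the discretisation loss is absorbed in the $\lesssim$ constants; the hypotheses $s \in (0,1)$ and $0 < t < 2\min\{s,1-s\}$ match exactly the ranges required by Proposition \ref{prop:configurations}, so no further compatibility checks are needed.
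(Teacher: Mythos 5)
Your argument is correct and essentially reproduces the paper's own proof: both reduce the statement to Proposition \ref{prop:configurations} by turning the projection data for $\mathcal{P}=\mathcal{D}_\delta(X)$ into a $(\delta,t,O(C),M)$-nice configuration with $M\sim|Y|$, compare the resulting lower bound on $|\mathcal{T}|/M$ with the incidence bound $|\mathcal{T}|\lesssim\sum_{y}|\pi_y(X)|_\delta$, and obtain the ``at least $|Y|/2$'' count by running the argument on the exceptional/bad subset of $Y$, which remains a $(\delta,t,2C)$-set. The only bookkeeping to adjust is your duality formula: under the paper's map $\mathbf{D}$ the line dual to $(y,\pi_y(c_p))$ passes through the coordinate-swapped point $((c_p)_2,(c_p)_1)$ rather than $c_p$, and its parameters lie outside $[0,1]^2$ since $y\in[1,2]$, so one should either transpose $X$ (harmless, as the Katz--Tao $(\delta,2s,O(K))$ property and $|X|_\delta$ are preserved) or use the renormalised dual point $(1/y,-\pi_y(c_p)/y)$, after which your bi-Lipschitz considerations go through verbatim.
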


\begin{proof} Given Proposition \ref{prop:configurations}, the proof of Corollary \ref{lem:projections} is exactly the same as the proof of \cite[Corollary 6.1]{OrponenShmerkin23} starting from \cite[Theorem 5.35]{OrponenShmerkin23}. We only sketch the idea. It is enough to establish the existence of one $y\in Y$ satisfying the desired conclusion; we can then apply this case to the exceptional set of $y\in Y$ that fail it. This is because any subset of $Y$ cardinality $\ge |Y|/2$ satisfies the same assumptions, up to a change in the constants.

    Write $\mathcal{P} := \mathcal{D}_{\delta}(X)$. Then $\mathcal{P}$ is a Katz-Tao $(\delta,2s,O(K))$-set. For each $y \in Y_{\delta}$ (a $\delta$-net inside $Y$), cover $\pi_{y}(\mathcal{P})$ by a family $\mathcal{T}_{y} \subset \mathcal{T}^{\delta}$ such that $|\mathcal{T}_{y}| \lesssim |\pi_{y}(\mathcal{P})|_{\delta} \sim |\pi_{y}(X)|_{\delta}$, and the slopes of the tubes in $\mathcal{T}_{y}$ are roughly parallel to the line $\pi_{y}^{-1}\{0\}$. Set
    \begin{displaymath} \mathcal{T} := \bigcup_{y \in Y_{\delta}} \mathcal{T}_{y}. \end{displaymath}
    Then $(\mathcal{P},\mathcal{T})$ is a $(\delta,t,C,M)$-nice configuration with $M = |Y_{\delta}|$. We may now infer from Proposition \ref{prop:configurations} that
    \begin{displaymath} \frac{|\mathcal{T}|}{|Y_{\delta}|} \gtrapprox K^{-\frac{t}{4s(1-s)}}\cdot |X|_{\delta}^{\frac{1}{2}+\frac{t}{4s}}. \end{displaymath}
    This implies the claim, taking into account that $|\mathcal{T}| \lesssim |Y_{\delta}| \cdot \max \{|\pi_{y}(X)|_{\delta} : y \in Y_{\delta}\}$. \end{proof}

\subsection{Rescaling $Y$}

We now derive Theorem~\ref{thm:rescy} from Corollary~\ref{lem:projections} by a rescaling argument on the set $Y$ of projection directions.

\begin{proof}[Proof of Theorem~\ref{thm:rescy}]
    By passing to suitable $\delta$-separated subsets, we may assume that $A_1, A_2$ and $Y$ are $\delta$-separated. Note that the assumptions imply that $A_1$ and $A_2$ are Katz-Tao $(\delta,s,\delta^{-ct})$-sets.
    Let $I\subset[0,1]$ be an interval of minimal length $\rho$ such that
    \[
        \abs{Y\cap I}_\delta \geq \rho^{\frac{t}{2}}\abs{Y}_\delta.
    \]
    Since $Y$ is a $(\delta,t,\delta^{ct})$-set,
    \begin{equation} \label{eq:rho-bound}
        1\ge \rho \ge \delta^{2c}.
    \end{equation}
    Write $I=[y_0,y_0+\rho]$ and let $X$ be the image of $A_1\times A_2$ under the map
    \[
        u_{y_0}\colon (a_1,a_2) \mapsto (a_1-y_0a_2,a_2).
    \]
    The projections of $A_1\times A_2$ with slopes in $Y\cap I$ are precisely the projections of $X$ with slopes in the subset
    \[
        Z = \{ y-y_0 : y\in Y\cap I\} \subset [0,\rho].
    \]
    In other words
    \begin{equation}\label{form32} \pi_{y - y_{0}}(X) = \pi_{y}(A_{1} \times A_{2}), \end{equation}
    where $\pi_{y}(x_{1},x_{2}) = x_{1} - yx_{2}$ (recall \eqref{def:projections}). Given $\lambda>0$, define $S_\lambda(x)=\lambda x$.
    By our choice of $\rho$, the set $Z' := S_{\rho^{-1}}(Z)$ is a $(\rho^{-1}\delta,t/2)$-set.

    Decompose $X$ into rectangular blocks
    \begin{equation*}
        X_j = X\cap \big([j\rho,(j+1)\rho)\times [0,1]\big).
    \end{equation*}
    Note that for $z\in[0,\rho]$, the sets $\{\pi_z(X_j)\}_j$ have bounded overlap for any $z\in Z$, and so
    \begin{equation} \label{eq:disjoint}
        \abs{\pi_y(A_1\times A_2)}_\delta =  |\pi_{y - y_{0}} X|_{\delta} \gtrsim \sum_j |\pi_{y - y_{0}} X_j|_{\delta}, \qquad y \in Y \cap I.
    \end{equation}
    Let $\mu_X$ be the uniform probability measure on $X$, and $\mu=(\pi_0)_{\#}\mu_X$ its projection to the first coordinate.
    For any ball $B_r\subset [0,1]$ of radius $r\in [\delta,1]$,
    \begin{align*}
        \mu(B_r) & = \frac{1}{\abs{A_1\times A_2}} \abs{\{(a_1,a_2)\in X\ :\ a_1-y_0a_2\in B_r\}}            \\
                 & = \frac{1}{\abs{A_1}\abs{A_2}} \sum_{a_2\in A_2} \abs{\{ a_1\ : \ a_1\in (B_r+y_0a_2) \}} \\
                 & \leq \delta^{-ct} \cdot r^{s}.
    \end{align*}
    By the dyadic pigeonhole principle, we may choose $\lambda>0$ and a set $H'\subset [0,1]$ such that $\mu(H')\gtrapprox 1$ and $\mu(I)\sim \lambda$ for all $\delta$-intervals $I$ intersecting $H'$. Let $H$ be a  uniform subset of $H'$ with $|H'|_{\delta}\gtrapprox |H|_{\delta}$.
    Then $H$ is a $(\delta,s,\approx\delta^{-ct})$-set with $\mu(H) \gtrapprox 1$.

    From now on, we consider only the set $J_{H} = \{j : \pi_0(X_j)\cap H\neq\emptyset\}$. Note that, since $H$ is a $(\delta,s,\lessapprox\delta^{-ct})$-set, we have
    \begin{equation} \label{eq:lower-N}
        |J_{H}|= |H|_{\rho} \gtrapprox \delta^{ct} \rho^{-s}.
    \end{equation}
    Moreover, for all $j \in J_{H}$, writing $I_j=[j\rho,(j+1)\rho]$,
    \[
        \frac{\abs{X_j}_\delta}{\abs{X}_\delta} \sim \mu_{X}(X_{j}) = \mu(I_j)
        \geq \mu(I_j\cap H)
        \approx \lambda \abs{I_j\cap H}_\delta
        = \lambda \abs{H}_\delta \abs{H}_\rho^{-1}.
    \]
    Since $\lambda \abs{H}_\delta\sim \mu(H)\gtrapprox 1$, we find
    \begin{equation}\label{eq:size-X'}
        \abs{X_j}_\delta \gtrapprox \abs{H}_\rho^{-1}\abs{X}_\delta, \qquad j\in J_{H}.
    \end{equation}
    By \eqref{eq:disjoint},
    \begin{equation}  \label{eq:lower-proj-coarse}
        \frac{1}{|Y \cap I|} \sum_{y \in Y \cap I} |\pi_y(A_1\times A_2)|_{\delta} = \frac{1}{|Z|} \sum_{z \in Z} \abs{\pi_z X}_\delta \gtrsim \sum_{j \in J_{H}} \frac{1}{|Z|} \sum_{z \in Z} |\pi_{z}X_{j}|_{\delta}.
    \end{equation}
    We now seek to find a lower bound for the inner sum. Fix $j \in J_{H}$, and write $X' := X_{j}$. Without loss of generality, assume $X'\subset [0,\rho]\times [0,1]$.

    Writing
    \begin{equation*}
        \pi_z(x_1,x_2) = S_{\rho}\left( \rho^{-1} x_1+(\rho^{-1}z) x_2\right),
    \end{equation*}
    we see that
    \begin{equation*}
        \pi_z X' = S_{\rho}\pi_{\rho^{-1}z} X'',\quad\text{where }X''= L_{\rho}(X'),\, L_{\rho}:(x_1,x_2)\mapsto (\rho^{-1} x_1,x_2).
    \end{equation*}
    In particular,
    \begin{displaymath}
        |\pi_z X'|_{\delta} = |\pi_{\rho^{-1}z} X''|_{\rho^{-1}\delta}, \qquad z \in Z.
    \end{displaymath}
    Consequently, recalling that $Z' = S_{\rho^{-1}}(Z)$,
    \begin{equation}\label{eq:X'-rescaling} \frac{1}{|Z|} \sum_{z \in Z} |\pi_{z}X'|_{\delta} = \frac{1}{|Z'|} \sum_{z' \in Z'} |\pi_{z'}X''|_{\rho^{-1}\delta}. \end{equation}
    Recall (from below \eqref{form32}) that $Z'$ is a $(\rho^{-1}\delta,t/2)$-set. This will enable us to find a good lower bound for the right hand side, by applying Corollary \ref{lem:projections} to $X''$ and $Z'$.
    We start by estimating $|X''|_{\rho^{-1}\delta}$.
    Since $X''=L_{\rho} X'$, we know that $|X''|_{\rho^{-1}\delta}$ is the number of $(\delta\times \rho^{-1}\delta)$-mesh rectangles intersecting $X'$.
    Any such rectangle $R$ satisfies
    \begin{equation*}
        |X'\cap R|_{\delta}
        \lesssim \delta^{-ct} \rho^{-s}.
    \end{equation*}
    Indeed,
    \(
    |X'\cap R|_{\delta}
    \leq \abs{X \cap R}_\delta
    \leq \abs{A_2 \cap B_{\rho^{-1}\delta}}_\delta
    \)
    and, since $A_2$ is a Katz-Tao $(\delta,s,\delta^{-ct})$-set, one has $|A_2\cap B_{\rho^{-1}\delta}|_{\delta} \le \delta^{-ct} \rho^{-s}$.
    We deduce that
    \begin{equation} \label{eq:X''-size}
        |X''|_{\rho^{-1}\delta}  \gtrsim \delta^{ct} \cdot \rho^{s} \cdot |X'|_{\delta}
        \overset{\eqref{eq:size-X'}}{\gtrapprox} \delta^{ct}\cdot \rho^{s} \cdot |H|_{\rho}^{-1} \cdot |X|_{\delta} .
    \end{equation}

    We claim that in addition $X''$ is a Katz-Tao $(\rho^{-1}\delta,2s,\delta^{-4cs})$-set.
    To see this, fix $r\in[\rho^{-1}\delta,1]$ and consider a square $Q_r=I_r\times J_r$ of size $r$ in $\R^2$.
    If $(x_1,x_2)\in X''\cap Q$ is defined up to an error $\rho^{-1}\delta$, then $x_2\in A_2\cap J_r$ is defined up to $\rho^{-1}\delta$, and then
    \(
    \rho x_1 \in (A_1-y_0x_2)\cap (\rho I_r)
    \)
    is defined up to $\delta$.
    Therefore,
    \begin{align*}
        \abs{X''\cap Q_r}_{\rho^{-1}\delta}
         & \lesssim \abs{A_1\cap(\rho I_r)}_\delta \cdot \abs{A_2\cap J_r}_{\rho^{-1}\delta} \\
         & \leq \abs{A_1\cap(\rho I_r)}_\delta \cdot \abs{A_2\cap J_r}_{\delta}.
    \end{align*}
    Since $A_1$ and $A_2$ are Katz-Tao $(\delta,s,\delta^{-ct})$-sets, the above inequality implies
    \begin{align*}
        \abs{X''\cap Q_r}_{\rho^{-1}\delta}
         & \leq \delta^{-ct} \rho^{s} r^{s} \delta^{-s} \cdot \delta^{-ct} r^{s} \delta^{-s} \\
         & \leq \delta^{-2ct} \rho^{-s} r^{2s} (\rho^{-1}\delta)^{-2s}                       \\
         & \leq \delta^{-4cs} \left(\frac{r}{\rho^{-1}\delta}\right)^{2s},
    \end{align*}
    where we used the assumption $t\leq s$ and \eqref{eq:rho-bound} for the last inequality.

    We are ready to apply Corollary \ref{lem:projections} to $X''$ and $Z'$, with parameters $\rho^{-1}\delta$ in place of $\delta$, $t/2$ in place of $t$, and $\delta^{-4cs}$ in place of $K$.
    Recall also that $t\le s \le 3/4$, so that in particular, $t/2\leq s/2 \leq s$ and $t/2\leq 1/2 \leq 2(1-s)$. The conclusion is that
    \begin{align*}
        \frac{1}{|Z'|} \sum_{z' \in Z'} |\pi_{z'} X''|_{\rho^{-1}\delta} & \gtrapprox \delta^{\frac{ct}{2(1-s)}} \cdot |X''|_{\rho^{-1}\delta}^{\frac{1}{2}+\frac{t}{8s}}                                                                                 \\
                                                                         & \ge \delta^{2ct} \cdot |X''|_{\rho^{-1}\delta}^{\frac{1}{2}+\frac{t}{8s}}                                                                                                      \\
                                                                         & \overset{\eqref{eq:X''-size}}{\gtrapprox} \delta^{2ct} \cdot (\delta^{ct}\rho^{s}\abs{H}_\rho^{-1})^{\frac{1}{2}+\frac{t}{8s}} \cdot \abs{X}_\delta^{\frac{1}{2}+\frac{t}{8s}} \\
                                                                         & \gtrapprox \delta^{\frac{31ct}{8}}\cdot  \big(\rho^{s}|H|_{\rho}^{-1}\big)^{\frac{1}{2}+\frac{t}{8s}} \cdot  \delta^{-s -\frac{t}{4}},
    \end{align*}
    using that $|X|_{\delta}\sim \abs{A_1\times A_2} \ge \delta^{2ct-2s}$ and $t\leq s$ in the last line. This estimate is valid for every $X'' := X_{j}'' := L_{\rho}(X_{j})$, where $j \in J_{H}$.

    Recalling \eqref{eq:lower-proj-coarse}-\eqref{eq:X'-rescaling}, \eqref{eq:lower-N} and \eqref{eq:rho-bound}, we conclude that
    \begin{align*}
        \tfrac{1}{|Y \cap I|} \sum_{y \in Y \cap I} \abs{\pi_y(A_1\times A_2)}_\delta & \gtrsim \sum_{j \in J_{H}} \tfrac{1}{|Z'|} \sum_{z' \in Z'} |\pi_{z'}X_{j}''|_{\rho^{-1}\delta}                                                              \\
                                                                                      & \gtrapprox |H|_{\rho} \cdot \delta^{\frac{31ct}{8}}\cdot  \big(\rho^{s}|H|_{\rho}^{-1}\big)^{\frac{1}{2}+\frac{t}{8s}} \cdot  \delta^{-s -\frac{t}{4}}       \\
                                                                                      & \gtrapprox \delta^{\frac{31}{8}ct} \cdot \big(\rho^{s}\big)^{\frac{1}{2}+\frac{t}{8s}} \abs{H}_\rho^{\frac{1}{2}-\frac{t}{8s}} \cdot \delta^{-s-\frac{t}{4}} \\
                                                                                      & \overset{\eqref{eq:lower-N}}{\gtrapprox} \delta^{\frac{35}{8}ct} \cdot \big(\rho^{s}\big)^{\frac{1}{2}+\frac{t}{8s}}
        \big(\rho^{-s}\big)^{\frac{1}{2}-\frac{t}{8s}}  \cdot \delta^{-s-\frac{t}{4}}                                                                                                                                                                \\
                                                                                      & \overset{\eqref{eq:rho-bound}}{\gtrapprox} \delta^{\frac{40}{8} ct}\cdot \delta^{-s-\frac{t}{4}}.
    \end{align*}
    Taking $c=1/24$ we get the desired conclusion.
\end{proof}

\section{An exponential lower bound for the exponent}

With Theorem~\ref{thm:rescy} at hand, the proof of Theorem~\ref{second} follows the same general scheme as that of Theorem~\ref{main}.
One first proves a flattening statement in the spirit of Lemma~\ref{lemma3}, and then applies it iteratively to obtain a measure $\Pi_n$ with Fourier decay.

Finally, Lemma~\ref{order} allows one to compare the Fourier transform of $\mu_1\ff\dots\ff\mu_n$ with that of $\Pi_n$ to get the desired inequality.

\subsection{Flattening lemma}

The lemma below is similar to Lemma~\ref{lemma3}, but uses only one additive convolution. The bound obtained on the energy of $\Pi$ should be understood as $\dim(AB-AB)\geq \dim A + \frac{1}{C}\dim B$ in an energy sense.

For the proof, we use an argument originating in the work of Bourgain, Glibichuk and Konyagin~\cite{bgk}, based on the Balog-Szemerédi-Gowers lemma, see also Li~\cite{Li21} for a setting closer to the one studied here.

\begin{lemma}[Energy flattening]
    \label{flattening}
    There exists a universal constant $C>0$ such that for all $\epsilon>0$ the following holds for all $\delta > 0$ small enough.

    Let $\mu$ and $\nu$ be two probability measures on $[1,2]$, 
    and set
    \[
        \Pi = (\mu\ff\nu) \mm (\mu\ff\nu).
    \]
    Let $0< t \leq s\leq 3/4$ and assume $I_s^\delta(\mu)\leq\delta^{-\epsilon}$ and $I_t^\delta(\nu)\leq\delta^{-\epsilon}$.
    Then
    \[
        I_{s+t/C}^\delta(\Pi) \leq \delta^{-C\epsilon/t}.
    \]
\end{lemma}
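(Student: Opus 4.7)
I argue by contradiction, combining a Balog-Szemerédi-Gowers (BSG) type extraction of multiplicative structure with the projection estimate Theorem~\ref{thm:rescy}.

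Suppose for contradiction that $I^\delta_{s+t/C}(\Pi) > \delta^{-C\epsilon/t}$. Write $\eta := \mu \ff \nu$, so that $\Pi = \eta \mm \eta$ and $\widehat{\Pi} = |\widehat{\eta}|^2$. First, using the Fourier formula \eqref{eq:energy-fourier} and dyadic pigeonholing in frequency, I extract a scale $r \in [\delta,1]$ at which $\|\Pi_r\|_2^2$ is anomalously large compared to the value $\sim r^{s+t/C-1}$ that would hold if $\Pi$ had dimension $s+t/C$; equivalently, the additive $4$-energy of $\eta_r$ exceeds what the $L^2$-concentration of $\eta_r$ alone would force. Further pigeonholing on level sets of $\eta_r$ isolates a discretized set $E \subset \spt(\mu)\cdot\spt(\nu) \subset [1,4]$ with $|E|_r \lessapprox r^{-s-t/C}$ carrying a $\gtrapprox 1$ proportion of the mass of $\eta$ and whose additive energy is $\gtrapprox |E|_r^3$.

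Following the scheme of Bourgain-Glibichuk-Konyagin~\cite{bgk}, and its quantitative single-scale version in Li~\cite{Li21}, I then run BSG together with the Plünnecke-Ruzsa calculus --- working in the logarithmic coordinate on $[1,2]$, where multiplication is bi-Lipschitz-ly additive --- to translate the additive concentration of $\eta$ in $E$ into coordinate-wise information. The output is subsets $A' \subset \spt(\mu)$ and $B' \subset \spt(\nu)$ of relative measure $\gtrapprox 1$ such that
\[
    |A'B' - A'B'|_r \lessapprox r^{-s-t/C}.
\]
Via Lemma~\ref{frostman-energy}, the energy hypotheses on $\mu,\nu$ force, after a further trimming with the same polynomial losses, $A'$ to be a Katz-Tao $(r, s, r^{-O(\epsilon)})$-set with $|A'|_r \lesssim r^{-s}$, and $B'$ to be a $(r, t, r^{-O(\epsilon)})$-set. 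Fix $b_0 \in B'$ and set $Y := \{b/b_0 : b \in B'\} \subset [\tfrac{1}{2}, 2]$, which is a $(r, t, r^{-O(\epsilon)})$-set by bi-Lipschitz invariance of $b \mapsto b/b_0$. For every $y = b/b_0 \in Y$ the identity $A' b_0 - A' b = b_0 \cdot \pi_y(A' \times A')$ yields $|\pi_y(A' \times A')|_r \lesssim |A' B' - A'B'|_r \lessapprox r^{-s-t/C}$. However --- assuming without loss of generality $s \leq 3/4$, which only weakens the target exponent --- Theorem~\ref{thm:rescy} produces \emph{some} $y \in Y$ with $|\pi_y(A' \times A')|_r \geq r^{-s - ct}$ for $c = 1/24$. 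Choosing $C$ so that $1/C < c$, and $\epsilon$ of order $t$ so as to absorb the accumulated $r^{-O(\epsilon)}$ losses into the gap $r^{t/C - ct}$, we reach the desired contradiction.

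The main technical obstacle is the multiplicative BSG extraction: passing from additive concentration of $\eta = \mu\ff\nu$ in the abstract set $E$ to coordinate-wise structured subsets $A' \subset \spt(\mu)$ and $B' \subset \spt(\nu)$. Standard BSG returns only an unstructured subset of $\spt\eta$; to descend to subsets of the factors while preserving quantitative non-concentration, one must iterate Ruzsa-triangle and Plünnecke-type estimates in the logarithmic coordinate system on $[1,2]$. This is precisely why the statement requires $\spt\mu, \spt\nu \subset [1,2]$ rather than the weaker $[-1,1]$ of Theorem~\ref{main}: division must be bi-Lipschitz, and all $r^{-O(\epsilon)}$ losses must be tracked carefully in order to arrive at the claimed $C\epsilon/t$ dependence in the conclusion.
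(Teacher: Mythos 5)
Your overall architecture does match the paper's: argue by contradiction, pigeonhole a bad scale, convert large additive energy into a BSG-type structured configuration, and contradict Theorem~\ref{thm:rescy} with $c=1/24$ after choosing $1/C<c$. But the heart of the matter --- the step you yourself label ``the main technical obstacle'' --- is asserted rather than proved, and in the form you assert it, it is too strong to be true. From large additive energy of $\eta=\mu\ff\nu$ one cannot extract sets $A'\subset\spt\mu$, $B'\subset\spt\nu$ of relative measure $\gtrapprox 1$ with $\abs{A'B'-A'B'}_r\lessapprox r^{-s-t/C}$: BSG only yields subsets that are polynomially large in the energy deficit (here losses of size $r^{O(t/C)}$, not $\delta^{o(1)}$), and it yields them inside the energy-carrying configuration, not as a rectangle $A'\times B'$ with near-full marginals; a measure $\mu$ placing half its mass on a multiplicatively structured set and half on a ``random'' set already defeats the near-full-measure claim. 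What is actually available, and what the contradiction with Theorem~\ref{thm:rescy} needs, is differently shaped: a \emph{single fixed} pair of Frostman subsets $A_\star,A'_\star$ (obtained from dyadic level sets of $\mu_\rho$, dilated by elements of $\spt\nu$), together with a non-concentrated $(\rho,t,\cdot)$-set $Y\subset\spt\nu$ of directions such that $\abs{A_\star-yA'_\star}_\rho$ is small for \emph{every} $y\in Y$. Producing this requires the level-set decomposition of $\mu_\rho$, an application of BSG for each $y$ in a large set $Y_0$, a Fubini/Cauchy--Schwarz argument to replace the $y$-dependent sets $A_y,A'_y$ by a fixed $A_{y_\star},A'_{y_\star}$, and repeated use of Ruzsa's triangle inequality; none of this appears in your sketch, so the central implication is missing.

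Two further gaps in the bookkeeping, which the paper's two-step structure (the single-scale Lemma~\ref{flattening-keystep} plus a summation over scales) is designed to handle. First, the hypothesis $\abs{A_i}_\delta\le\delta^{-s}$ of Theorem~\ref{thm:rescy} (your $\abs{A'}_r\lesssim r^{-s}$) does not follow from $I_s^\delta(\mu)\le\delta^{-\epsilon}$ alone; in the paper it comes from the case assumption $\norm{\mu_\rho}_2^2\ge\rho^{-1+s+\tau}$ via \eqref{form26}, and in your ``absolute'' setup it would have to be recovered from the contradiction hypothesis through the chain $\norm{\Pi_r}_2\le\norm{\eta_r}_2\lesssim\norm{\mu_r}_2\lesssim\delta^{-\epsilon/2}r^{(s-1)/2}$, a reduction you do not perform. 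Second, all losses coming from the hypotheses are powers of $\delta$, not of the pigeonholed scale $r$; if $r\gg\delta$ the factor $\delta^{-O(\epsilon)}$ can swamp the gain $r^{ct-t/C}$. The paper deals with this by proving the nontrivial estimate only for $\rho\in[\delta,\delta^{\epsilon/\tau}]$ and handling larger scales with the trivial bound $\norm{\Pi_\rho}_2\le\norm{\mu_\rho}_2$; ``choosing $\epsilon$ of order $t$'' does not address it, since the lemma must hold for every $\epsilon$ and the relevant comparison is between $\delta^{-\epsilon}$ and $r^{-t}$ rather than between $\epsilon$ and $t$ (one can show the bad scale is automatically forced down to where $\delta^{-\epsilon}\le r^{-O(t/C)}$, but that too is an argument you would need to supply). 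As it stands, the proposal reproduces the frame of the paper's proof but omits its substantive content.
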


The key step in the proof of Lemma~\ref{flattening} is the following single-scale result.
\begin{lemma}
    \label{flattening-keystep}
    Under the assumptions of Lemma~\ref{flattening}, for all $\rho\in[\delta,\delta^{\epsilon/t}]$ one has
    \[
        \norm{\mu_{\rho}}_2^2 \ge\rho^{-1+s+t/C} \implies \norm{\Pi_\rho}_2^2 \leq \rho^{\tau}\norm{\mu_\rho}_2^2,
    \]
    if $C$ is a sufficiently large constant.
\end{lemma}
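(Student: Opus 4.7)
The plan is to argue by contradiction: assume $\|\Pi_\rho\|_2^2 \geq \rho^\tau \|\mu_\rho\|_2^2$ for a small positive $\tau$ (ultimately of the order $t/C$), and derive an incompatibility with the hypothesis $\|\mu_\rho\|_2^2 \geq \rho^{-1+s+t/C}$ by combining a measure-theoretic Balog--Szemer\'edi--Gowers argument, in the spirit of \cite{bgk,Li21}, with the projection expansion of Theorem~\ref{thm:rescy}. The universal constant $C$ will be chosen strictly larger than $1/c = 24$.

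The first step is a standard single-scale reduction. By dyadic level-set decomposition of $\mu_\rho$ and $\nu_\rho$ followed by pigeonholing, replace $\mu$ and $\nu$ by normalised uniform measures on $\rho$-separated sets $A,B\subset [1,2]$. Lemma~\ref{frostman-energy} converts $I_s^\delta(\mu),I_t^\delta(\nu)\leq\delta^{-\epsilon}$ into the statements that $A$ is a $(\rho,s,\rho^{-O(\epsilon)})$-set and $B$ is a $(\rho,t,\rho^{-O(\epsilon)})$-set, up to discarding a set of mass $\lessapprox\rho^\epsilon$. The standing hypothesis $\|\mu_\rho\|_2^2\geq\rho^{-1+s+t/C}$ further forces $|A|_\rho\lessapprox\rho^{-s-t/C}$, which makes $A$ a Katz-Tao $(\rho,s,\rho^{-t/C-O(\epsilon)})$-set — and hence a Katz-Tao $(\rho,s,\rho^{-ct})$-set once $C\geq 1/c$ and $\epsilon$ is small enough. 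This is exactly the form of input required by Theorem~\ref{thm:rescy} (or equivalently, by Corollary~\ref{lem:projections} applied to $A\times A$).

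Now I would extract additive structure of $\sigma:=\mu\ff\nu$. Young's inequality yields $\|\Pi_\rho\|_2 \leq \|\sigma_\rho\|_2 \leq \|\mu_\rho\|_2$, so the contradictory hypothesis $\|\Pi_\rho\|_2^2\geq\rho^\tau\|\mu_\rho\|_2^2$ in particular forces $\|\sigma_\rho\mm\sigma_\rho\|_2^2\gtrapprox\rho^\tau\|\sigma_\rho\|_2^2$, that is, a small $L^2$-doubling of $\sigma$ at scale $\rho$. A measure-theoretic Balog--Szemer\'edi--Gowers lemma then delivers, after pigeonholing, a subset $S$ of the essential support of $\sigma_\rho$ inside $AB$ with $|S|_\rho\gtrapprox\rho^{-d_\sigma}$, where $d_\sigma\leq s+t/C+O(\tau)$, and $|S-S|_\rho\lessapprox\rho^{-O(\tau)}|S|_\rho\lessapprox\rho^{-s-t/C-O(\tau)}$. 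A further pigeonholing fixes a single $b_1\in B$ and produces a subset $B'\subset B$ of density $\gtrapprox\rho^{O(\tau)}$ in $B$ — hence itself a $(\rho,t,\rho^{-O(\epsilon+\tau)})$-set — such that for every $b_2\in B'$ the set
\[
G_{b_2} := \{(a_1,a_2)\in A\times A : a_1 b_1\in S,\; a_2 b_2\in S\}
\]
has cardinality $\gtrapprox\rho^{O(\tau)}|A|^2$. The key identity $a_1 b_1 - a_2 b_2 = b_1\,\pi_{b_2/b_1}(a_1,a_2)\in S-S$ shows that $\pi_{b_2/b_1}(G_{b_2})$ is concentrated in the set $b_1^{-1}(S-S)$, of cardinality $\lessapprox\rho^{-s-t/C-O(\tau)}$.

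Finally, the direction set $Y:=\{b_2/b_1:b_2\in B'\}\subset[1/2,2]$ is a bi-Lipschitz image of $B'$, and hence a $(\rho,t,\rho^{-O(\epsilon+\tau)})$-set. A robust version of Theorem~\ref{thm:rescy} (obtained by applying Corollary~\ref{lem:projections} to a rescaled copy of the dense subset $G_{b_2}$ of $A\times A$) then yields some $y\in Y$ with $|\pi_y(G_{b_2})|_\rho\gtrapprox\rho^{-s-ct}$. Confronting this with the upper bound $\rho^{-s-t/C-O(\tau)}$ forces $ct\leq t/C+O(\tau)$, which fails for $C>1/c$ and $\tau$ smaller than a fixed positive multiple of $t(c-1/C)$; taking, say, $C=3/c$ and $\tau=ct/4$ closes the argument. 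The main technical obstacle will be this Balog--Szemer\'edi--Gowers extraction step in the measure-theoretic, multi-scale setting: transferring the $L^2$-doubling bound to a set-theoretic small-doubling bound for a large portion of $\spt\sigma_\rho$ while keeping the Frostman constants polynomial in $\rho^{-\epsilon}$ is precisely what produces the loss $C\epsilon/t$ appearing in Lemma~\ref{flattening}.
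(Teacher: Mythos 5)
Your overall strategy (argue by contradiction, discretise into dyadic level sets, run a Balog--Szemer\'edi--Gowers step, and contradict a Ren--Wang-type projection estimate) is the same as the paper's, but as written the final step has a genuine gap: the set you feed into the projection theorem depends on the direction in which you know the projection is small. After BSG you obtain, for each $b_2\in B'$, a product set $G_{b_2}=A_1\times A_2^{(b_2)}$ (with $A_1=\{a:ab_1\in S\}$ fixed but $A_2^{(b_2)}=\{a:ab_2\in S\}$ varying) such that $\pi_{b_2/b_1}(G_{b_2})\subset b_1^{-1}(S-S)$ is small \emph{only for the single direction} $y=b_2/b_1$. Theorem~\ref{thm:rescy} (or Corollary~\ref{lem:projections}) applies to a \emph{fixed} set and a $(\rho,t)$-set of directions, and concludes that \emph{some} $y\in Y$ gives a large projection of that fixed set; applied to $G_{b_2}$ it produces some $y'\in Y$ which has no reason to equal $b_2/b_1$, so no contradiction follows. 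Nor can you bridge the two by a naive Ruzsa triangle inequality: the fibres satisfy $A_2^{(b_2)}\subset b_2^{-1}S$ and $A_2^{(b_2^\star)}\subset (b_2^\star)^{-1}S$, and a set of the form $b_2^{-1}S-(b_2^\star)^{-1}S$ is not controlled by $S-S$ -- this is exactly the sum--product obstruction the whole lemma is about. What is missing is the step that manufactures a \emph{single} product set with small $\pi_y$-image for \emph{all} $y$ in a $(\rho,t)$-set: in the paper this is done by a Cauchy--Schwarz/second-moment argument selecting $y_\star$ so that $X_y\cap X_{y_\star}$ is large for a $\nu$-large set of $y$, followed by the transitivity chain $A_\star\approx A_y\cap A_\star\approx A_y\approx yA'_y\approx y(A'_y\cap A'_\star)\approx yA'_\star$ via the discretised Ruzsa inequality, after which Theorem~\ref{thm:rescy} applies to the fixed product $A_\star\times A'_\star$. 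Your argument can be repaired along the same lines (choose $b_2^\star$ so that $|A_2^{(b_2)}\cap A_2^{(b_2^\star)}|_\rho$ is large for many $b_2$, then chain through the intersection using that $A_2^{(b_2^\star)}-A_2^{(b_2^\star)}$ is small), but this step is essential and absent from the proposal.

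Two secondary points. First, the non-concentration constants at scale $\rho$ cannot be written as $\rho^{-O(\epsilon)}$: the hypothesis $I_s^\delta(\mu)\leq\delta^{-\epsilon}$ only gives, on the range $\rho\in[\delta,\delta^{\epsilon/t}]$, Frostman/Katz--Tao constants of size $\delta^{-O(\epsilon)}\leq\rho^{-O(\tau)}$ with $\tau\sim t/C$ (at the endpoint $\rho=\delta^{\epsilon/t}$ one has $\delta^{-\epsilon}=\rho^{-t}$), and since the admissible total loss must stay below $c\,t$ with $c=\tfrac1{24}$, the explicit multiples of $\tau$ accumulated through BSG, the refinements of Lemma~\ref{lem:energy}, and the Ruzsa chain are precisely what dictates the final choice of $C$ (the paper tracks losses up to $\rho^{-520\tau}$ and sets $\tau=c_0t/524$); this bookkeeping cannot be waved away as ``$O(\epsilon)$''. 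Second, replacing the paper's level sets $xA_i,yA_j$ of $\mu$ by a single BSG application to $\sigma=\mu\ff\nu$ is a legitimate variant (and your fibre sets restore the product structure needed for the projection theorem), but it does not simplify the direction-dependence issue above, which is the actual crux of the proof.
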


We postpone the proof of Lemma~\ref{flattening-keystep} to the next section, and first show how it implies Lemma~\ref{flattening}.
\begin{proof}[Proof of Lemma \ref{flattening}, assuming Lemma \ref{flattening-keystep}]

    Fix $\rho\in[\delta,\delta^{\epsilon/\tau}]$. Combining Lemma \ref{flattening-keystep} and the trivial bound $\norm{\Pi_\rho}_2^2 \leq \norm{\mu_\rho}_2^2$ for the case $\norm{\mu_\rho}_2^2\leq \rho^{-1+s+\tau}$, we get
    \[
        \rho^{1-s-\tau}\norm{\Pi_\rho}_2^2 \leq 1 + \rho^{1-s}\norm{\mu_\rho}_2^2.
    \]
    For $\rho \in [\delta^{\epsilon/\tau},1]$, we use the simple bound
    \[
        \rho^{1-s-\tau}\norm{\Pi_\rho}_2^2 \leq \delta^{-\epsilon/\tau}.
    \]
    Thus,
    \begin{align*}
        I_{s+\tau}^\delta(\Pi)
         & \lesssim \sum_{\delta\leq \rho=2^{-k}\leq 1} \rho^{1-s-\tau}\norm{\Pi_\rho}_2^2                                                    \\
         & \lesssim (\log 1/\delta)\delta^{-\epsilon/\tau} + (\log1/\delta) + \sum_{\delta\leq \rho=2^{-k}\leq1}\rho^{1-s}\norm{\mu_\rho}_2^2 \\
         & \lesssim (\log 1/\delta)\delta^{-\epsilon/\tau} + I_{s}^\delta(\mu).
    \end{align*}

\end{proof}

\subsection{Proof of Lemma \ref{flattening-keystep}}

We begin with an elementary lemma that relates measures with small energy and the non-concentration property for discretised sets.
\begin{lemma}[Energy and non-concentrated subsets]
    \label{lem:energy}
    Let $\nu$ be a probability measure on $[0,1]$ such that $I_s^\rho(\nu)<\rho^{-2\tau}$.
    Any union of $\rho$-intervals $A\subset[0,1]$ such that $\nu_\rho(A)\geq\rho^\tau$ contains a $(\rho,s,\rho^{-6\tau})$-set $A_1$ such that $\nu(A_1)\geq\rho^{2\tau}$.
\end{lemma}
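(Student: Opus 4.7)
My plan is to combine a Markov-type argument exploiting the energy bound (in the spirit of Lemma \ref{frostman-energy}(ii)) with a dyadic pigeonhole on the $\rho$-intervals comprising $A$. The first step is to extract a small "bad set" from the energy assumption: writing $f(x) := \int |x-y|^{-s}\,d\nu_\rho(y)$, the hypothesis $\int f\,d\nu_\rho = I_s^\rho(\nu) < \rho^{-2\tau}$ together with Markov's inequality gives that $G := \{f > \rho^{-4\tau}\}$ has $\nu_\rho(G) < \rho^{2\tau}$. The trivial bound $|x-y|^{-s} \geq r^{-s}\mathbf{1}_{B(x,r)}(y)$ then turns the "good" condition $x \notin G$ into the Frostman-type non-concentration $\nu_\rho(B(x,r)) \leq \rho^{-4\tau} r^s$ at every scale $r > 0$; this is the key bound I will exploit.

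Next, let $\mathcal{I}$ be the family of $\rho$-intervals forming $A$ and set $\mathcal{I}_1 := \{I \in \mathcal{I} : I \not\subseteq G\}$, choosing a witness point $p_I \in I \setminus G$ for each $I \in \mathcal{I}_1$. Since $A \setminus G \subseteq \cup\mathcal{I}_1$, one has $\nu_\rho(\cup\mathcal{I}_1) \geq \nu_\rho(A) - \nu_\rho(G) \geq \rho^\tau/2$. A dyadic pigeonhole on the values $\{\nu_\rho(I)\}_{I \in \mathcal{I}_1}$ then produces a sub-family $\mathcal{I}_2 \subseteq \mathcal{I}_1$ and a common weight $w_0 > 0$ such that $\nu_\rho(I) \in [w_0, 2w_0]$ for every $I \in \mathcal{I}_2$ and $|\mathcal{I}_2|\cdot w_0 \gtrsim \rho^\tau/\log(1/\rho) \geq \rho^{3\tau/2}$ for $\rho$ sufficiently small. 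I take $A_1 := \cup\mathcal{I}_2$.

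The $(\rho, s, \rho^{-6\tau})$-set property for $A_1$ then follows from a short geometric argument: for any ball $B(x,r)$ with $r \in [\rho, 1]$ meeting some $I_0 \in \mathcal{I}_2$, the witness $p_{I_0}$ lies within distance $r + \rho \leq 2r$ of $x$, so every interval of $\mathcal{I}_2$ meeting $B(x,r)$ is contained in $B(p_{I_0}, 4r)$. Disjointness of the intervals, combined with $p_{I_0} \notin G$, yields
\[
    |A_1 \cap B(x,r)|_\rho \cdot w_0 \leq \sum_{I \in \mathcal{I}_2,\, I \cap B(x,r) \neq \emptyset} \nu_\rho(I) \leq \nu_\rho(B(p_{I_0}, 4r)) \lesssim \rho^{-4\tau} r^s,
\]
and this combined with $w_0 \gtrsim \rho^{3\tau/2}/|A_1|_\rho$ gives $|A_1 \cap B(x,r)|_\rho \lesssim \rho^{-11\tau/2} r^s |A_1|_\rho \leq \rho^{-6\tau} r^s |A_1|_\rho$ for $\rho$ small.

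The main technical point I anticipate is the passage from the natural $\nu_\rho$-mass lower bound $\nu_\rho(A_1) \gtrsim \rho^{3\tau/2}$ to the $\nu$-mass bound $\nu(A_1) \geq \rho^{2\tau}$ required by the statement. Since $A_1$ is a union of $\rho$-intervals, the two masses differ only by bounded factors coming from the smoothing kernel $P_\rho$; this can be handled either by running the dyadic pigeonhole on $\{\nu(I^*)\}_I$ (with $I^*$ the $\rho$-enlargement of $I$) from the start, or by a slight inflation of each interval at the end. The bookkeeping is routine and costs at most an extra $\log(1/\rho)$ factor, which is absorbed by the gap between $3\tau/2$ and $2\tau$.
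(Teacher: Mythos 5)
Your argument is correct and follows the same basic strategy as the paper's proof: convert the energy bound into a Frostman-type non-concentration estimate off a small exceptional set (the paper does this scale-by-scale via Lemma~\ref{frostman-energy}(2); you do it with a single Markov inequality applied to $f(x)=\int|x-y|^{-s}\,d\nu_\rho(y)$ --- both work, the only cosmetic difference being that your bad set $G$ is small in $\nu_\rho$-measure rather than $\nu$-measure), then dyadically pigeonhole to make the relevant weights essentially constant, and finally compare covering numbers with $\nu_\rho$-masses. The implementation differs in one respect: the paper pigeonholes on the pointwise values of the density $\nu_\rho$ and takes $A_1$ to be a level set, whereas you pigeonhole on the interval masses $\nu_\rho(I)$ and keep a witness point $p_I\notin G$ in each selected interval. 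Your variant has the small advantage that $A_1$ is automatically a union of $\rho$-intervals, so covering numbers and masses are genuinely comparable, and your exponent bookkeeping ($\rho^{-11\tau/2}$ within the $\rho^{-6\tau}$ budget) is fine.

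The one step to be careful about is the last one, and your justification there is not literally correct: it is \emph{not} true that $\nu(A_1)$ and $\nu_\rho(A_1)$ ``differ only by bounded factors'' just because $A_1$ is a union of $\rho$-intervals --- all of the $\nu$-mass responsible for $\nu_\rho(A_1)$ could sit within distance $\rho$ of $A_1$ but outside it (indeed even outside $A$, in which case no subset of $A$ carries $\nu$-mass at all). Note, however, that the paper's own proof makes the same silent leap (passing from $\nu_\rho(A)\geq\rho^\tau$ to $\nu(A\setminus E)\geq\rho^\tau/2$), and in the only place the lemma is applied (the proof of Lemma~\ref{flattening-keystep}) it is the smoothed mass $\mu_\rho(A_y)$ that is actually used. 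So the clean repair is essentially the one you hint at: either state and use the conclusion for $\nu_\rho(A_1)$, which your construction already yields with margin ($\rho^{3\tau/2}$ versus the required $\rho^{2\tau}$), or run the pigeonhole on $\nu(I^{*})$ with $I^{*}$ an $O(\rho)$-enlargement of $I$ and accept that $A_1$ lies in a slight enlargement of $A$; merely ``inflating at the end'' without rerunning the pigeonhole does not by itself produce a lower bound on $\nu(A_1)$ for $A_1\subset A$.
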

\begin{proof}
    Indeed, from Lemma~\ref{frostman-energy}, there exists a set $E$ with measure $\nu(E)\leq (\log1/\rho)\rho^{2\tau}$ such that for all $x\not\in E$, for all $r\in[\rho,1]$,
    \[
        \nu(B(x,r)) \leq \rho^{-4\tau} r^s.
    \]
    The set $A'=A\setminus E$ satisfies $\nu(A')\geq\rho^\tau/2$.
    Taking dyadic level sets $2^{-k}$, for $k=0,\dots,\log1/\rho$, we may further restrict $A'$ to a subset $A_1$ such that $\nu(A_1)\geq\rho^{2\tau}$ and the density of $\nu_\rho$ is essentially constant on $A_1$:
    \[
        \exists D>0:\forall x\in A_1,\quad D \leq \nu_\rho(x) \leq 2D.
    \]
    Then, for all $x$ in $A_1$ and all $r\geq\rho$,
    \[
        \frac{\abs{A_1\cap B(x,r)}_\rho}{\abs{A_1}_\rho}
        \asymp \frac{\nu_\rho(A_1\cap B(x,r))}{\nu_\rho(A_1)}
        \lesssim \rho^{-2\tau} \nu_\rho(B(x,r))
        \leq \rho^{-6\tau} r^s.
    \]
    Enlarging the implicit constant, one readily checks that this estimate holds in fact for all $x$ in $\R$.
    (Indeed, if $B(x,r)\cap A_1=\varnothing$, there is nothing to prove, and if $B(x,r),\cap A_1\ni x_0$, then write $B(x,r)\subset B(x_0,2r)$.)
    This shows that $A_1$ is a $(\rho,s,\rho^{-6\tau})$-set.
\end{proof}

\begin{proof}[Proof of Lemma \ref{flattening-keystep}]
    Let $\tau = t/C$, for some large enough universal constant $C$. Assume
    \begin{equation}\label{eq:mularge}
        \norm{\mu_\rho}_2^2\geq \rho^{-1+s+\tau}.
    \end{equation}
    We wish to show the flattening estimate
    \begin{equation}\label{flatteningPi}
        \norm{\Pi_\rho}_2^2 \leq \rho^{\tau}\norm{\mu_\rho}_2^2.
    \end{equation}
    Assume, for the sake of contradiction, that \eqref{flatteningPi} fails. We discretise the measure $\mu$ at scale $\rho$ using dyadic level sets, and then use the quantitative projection estimate obtained in the previous section to derive a contradiction.

    \bigskip
    \noindent\underline{Step~1: Choosing large level sets for $\mu$}\\
    It is easy to check that there exist sets $A_i \subset [1,2]$, $i \geq 0$, which can be expressed as unions of $\rho$-intervals, such that $A_i$ is empty for $i \gtrsim \log\frac{1}{\rho}$, and
    \begin{equation}
        \label{eq:mullAi}
        \mu_\rho
        \lesssim \sum_{i \geq 0} 2^i \mathbf{1}_{A_i}
        \lesssim \mu_{3\rho} + 1.
    \end{equation}
    We have
    \[
        (\mu_\rho \ff \nu) \mm (\mu_\rho\ff\nu)
        = \iint_{\R \times \R} (\mu_\rho \ff \delta_{x}) \mm (\mu_\rho \ff \delta_y) \,d\nu(x) \,d\nu(y).
    \]
    Therefore, from the left inequality in \eqref{eq:mullAi},
    \[
        (\mu_\rho \ff\nu) \mm (\mu_\rho \ff \nu)
        \lesssim \sum_{i,j \geq 0} 2^{i + j} \iint (\mathbf{1}_{A_i} \ff \delta_{x}) \mm  (\mathbf{1}_{A_j} \ff \delta_y) \,d\nu(x) \,d\nu(y).
    \]
    Observe that $\mathbf{1}_{A_i} \ff \delta_{x} = \abs{x}^{-1}\mathbf{1}_{xA_i}$ and $\mathbf{1}_{A_j} \ff \delta_y = \abs{y}^{-1}\mathbf{1}_{yA_j}$. Since $\spt \nu \subset [1,2]$, the factors $|x|^{-1},|y|^{-1}$ lie in $[\tfrac{1}{2},1]$, so
    \[
        (\mu_\rho \ff \nu) \mm (\mu_\rho \ff \nu)
        \lesssim \sum_{i,j \geq 0} 2^{i + j} \iint  \mathbf{1}_{xA_i} \mm \mathbf{1}_{yA_j} \,d\nu(x) \,d\nu(y).
    \]
    The triangle inequality shows that
    \[
        \sum_{i,j \geq 0} 2^{i + j} \iint  \norm{\mathbf{1}_{xA_i} \mm \mathbf{1}_{yA_j}}_2 \,d\nu(x) \,d\nu(y)
        \gtrsim \norm{\Pi_\rho}_2.
    \]
    There are at most $O\bigl( (\log \frac{1}{\rho})^2 \bigr)\lessapprox 1$  terms in this sum (here and below, we use $\lessapprox$ to hide logarithmic functions of $\rho$). Hence, there exist $i,j \geq 0$ such that
    \begin{equation}
        \label{eq:defiandj}
        2^{i + j} \iint \norm{\mathbf{1}_{xA_i} \mm \mathbf{1}_{yA_j}}_2 \,d\nu(x) \,d\nu(y)
        \gtrapprox \norm{\Pi_\rho}_2.
    \end{equation}
    From now on we fix such $i$ and $j$.
    Note that, by \eqref{eq:mullAi} and Young's inequality, for all $x,y$,
    \[
        2^{i+j}\norm{\mathbf{1}_{xA_i} \mm \mathbf{1}_{yA_j}}_2  \lesssim \norm{\mu_{3\rho}}_1\norm{\mu_{3\rho}}_2 \lesssim \norm{\mu_\rho}_2.
    \]
    In particular, under our counter-assumption $\norm{\Pi_\rho}_2^2\geq\rho^{\tau}\norm{\mu_\rho}_2^2$, one finds that (if $\rho$ is small enough) there exists $x$ such that the set
    \begin{equation} \label{eq:def-Y0}
        Y_0 = \bigl\{\, y  :  2^{i+j}\norm{\mathbf{1}_{xA_i} \mm \mathbf{1}_{yA_j}}_2  \geq \rho^{\tau} \norm{\mu_\rho}_2 \,\bigr\}
    \end{equation}
    satisfies
    \begin{equation} \label{eq:nu-Y0}
        \nu (Y_0) \gtrapprox  \rho^{\frac{\tau}{2}}-\rho^{\tau}
        \ge \rho^{\frac{\tau}{2}}.
    \end{equation}
    We fix this value of $x$ for the rest of the proof.

    Observe that, by \eqref{eq:mullAi},
    \begin{equation}\label{above_ai}
        1\gtrsim 2^i\norm{\mathbf{1}_{xA_i}}_1 \sim 2^i\abs{A_i}
        \quad\mbox{and}\quad
        \norm{\mu_\rho}_2 \gtrsim 2^i\norm{\mathbf{1}_{xA_i}}_2 \sim 2^i \abs{A_i}^{1/2},
    \end{equation}
    where $|\cdot|$ denotes the Lebesgue measure, and similarly
    \begin{equation}\label{above_aj}
        1\gtrsim 2^j\norm{\mathbf{1}_{yA_j}}_1 \sim 2^j\abs{A_j}
        \quad\mbox{and}\quad
        \norm{\mu_\rho}_2 \gtrsim 2^j\norm{\mathbf{1}_{yA_j}}_2 \sim 2^j \abs{A_j}^{1/2}.
    \end{equation}
    Using also the definition of $Y_0$ and Young's inequality, we find
    \begin{displaymath}
        \rho^\tau \cdot \norm{\mu_\rho}_2 \leq 2^{i+j} \norm{\mathbf{1}_{x A_i} \mm \mathbf{1}_{yA_j}}_2 \lesssim 2^i\abs{A_i} \cdot 2^j \abs{A_j}^{\frac{1}{2}}, \quad y\in Y_0.
    \end{displaymath}
    Using \eqref{above_ai} and \eqref{above_aj} above, as well as the symmetry between $i$ and $j$, we deduce that
    \begin{equation}\label{below_aj}
        2^i\abs{A_i}\gtrsim \rho^\tau,\quad 2^j\abs{A_j}\gtrsim \rho^\tau
        \quad\mbox{and}\quad
        2^i\abs{A_i}^{\frac{1}{2}} \gtrsim \rho^\tau\norm{\mu_\rho}_2, \quad 2^j\abs{A_j}^{\frac{1}{2}} \gtrsim \rho^\tau\norm{\mu_\rho}_2.
    \end{equation}
    We record the following useful consequence of the estimates above:
    \begin{displaymath} \|\mu_{\rho}\|_{2} \stackrel{\eqref{below_aj}}{\lesssim} \rho^{-\tau}2^{i}|A_{i}|^{\frac{1}{2}} \stackrel{\eqref{above_ai}}{\lesssim} \rho^{-\tau} |A_{i}|^{-1}|A_{i}|^{\frac{1}{2}} = \rho^{-\tau} |A_{i}|^{-\frac{1}{2}}, \end{displaymath}
    so (also by the symmetry of $i$ and $j$)
    \begin{equation}\label{form26} \max\{|A_{i}|,|A_{j}|\} \lesssim \rho^{-2\tau}\|\mu_{\rho}\|_{2}^{-2} \stackrel{\eqref{eq:mularge}}{\leq} \rho^{1 - s - 3\tau}. \end{equation}

    \bigskip
    \noindent\underline{Step~2: Additive-multiplicative Balog-Szemerédi-Gowers}\\
    It follows from the definition \eqref{eq:def-Y0} that for every $y \in Y_0$,
    \begin{align*}
        \norm{\mathbf{1}_{x A_i} \mm \mathbf{1}_{yA_j}}^2_2
         & \geq \rho^{2\tau}\cdot 2^{-2i-2j}\cdot \norm{\mu_\rho}_2^2                                                 \\
         & \gtrsim \rho^{2\tau} \cdot 2^{-i}\cdot \abs{A_i}^{\frac{1}{2}} \cdot 2^{-j} \cdot \abs{A_j}^{\frac{1}{2}}.
    \end{align*}
    Since $2^i\abs{A_i}\lesssim 1$ by \eqref{above_ai}, and likewise for $A_j$, this yields
    \begin{equation}\label{energy}
        \norm{\mathbf{1}_{x A_i} \mm \mathbf{1}_{yA_j}}^2_2
        \gtrsim \rho^{2\tau} \cdot \abs{A_i}^{\frac{3}{2}} \cdot \abs{A_j}^{\frac{3}{2}}.
    \end{equation}
    The left-hand side is equal to the additive energy of $xA_i$ and $yA_j$ so, by the Balog-Szemerédi-Gowers lemma (see e.g.\cite[Theorem~5.2]{tao}), this implies that for every $y$ in $Y_0$, there exist sets $\bar{A}_y \subset xA_i$ and $\bar{A}'_y \subset A_j$ such that $\abs{\bar{A}_y}_\rho \gtrsim \rho^{2\tau}\abs{A_i}_\rho$ and $\abs{\bar{A}'_y}_\rho \gtrsim \rho^{2\tau}\abs{A_j}_\rho$ and
    \begin{equation}\label{17}
        \abs{\bar{A}_y - y\bar{A}_y'}_\rho \lesssim \rho^{-14\tau} \cdot \abs{A_i}_\delta^{\frac{1}{2}} \cdot \abs{A_j}_\delta^{\frac{1}{2}}
        \overset{\eqref{form26}}{\lesssim} \rho^{-17\tau-s}.
    \end{equation}
    Recall that $A_i$ is a union of $\rho$-intervals and $\mu_{\rho}$ is constant on $A_i$, up to a multiplicative factor. Using also that
    \begin{displaymath}
        \mu_\rho(A_i)\overset{\eqref{eq:mullAi}}{\gtrsim} 2^i\abs{A_i}\overset{\eqref{below_aj}}{\gtrsim}\rho^\tau,
    \end{displaymath}
    one has $\mu_\rho(\bar{A}_y)\gtrsim \rho^{3\tau}$. Since $\delta^{-\epsilon}\leq\rho^{-\tau}$ by assumption, we may use Lemma~\ref{lem:energy} to find a $(\rho,s,\rho^{-20\tau})$-set $A_y\subset \bar{A}_y$ with $\mu_\rho(A_y)\gtrsim\rho^{6\tau}$.
    Similarly, we may find a $(\rho,s,\rho^{-20\tau})$-set $A_y'\subset \bar{A}_y'$ with $\mu_\rho(A_y')\gtrsim\rho^{6\tau}$.
    One has in particular $\abs{A_y}_\rho\geq\rho^{-s+20\tau}$ and $\abs{A_y'}_\rho\geq\rho^{-s+20\tau}$, so \eqref{17} yields
    \[
        \abs{A_y - yA_y'}_\rho \lesssim \rho^{-40\tau}\cdot \abs{A_y}_{\rho}^{\frac{1}{2}}\cdot \abs{A_y'}_{\rho}^{\frac{1}{2}}, \qquad y \in Y_{0}.
    \]
    For subsets $A, A' \subset \R$ and $N\geq 1$, let us write $A \approx_{(N)} A'$ if
    \[
        \abs{A - A'}_\rho \lessapprox \rho^{-N\tau} \abs{A}_\rho^{1/2} \abs{A'}_\rho^{1/2}.
    \]
    For instance, the above bound on $\abs{A_y-yA_y'}_\delta$ can be rewritten
    \begin{equation}
        \label{eq:aAcApcb}
        A_y \approx_{(40)} yA'_y.
    \end{equation}
    Ruzsa's triangle inequality (see \cite[Proposition 3.5]{GKZ21} for the discretised version) can be summarised as: the relation $\approx$ is transitive
    i.e. $A \approx A'$ and $A' \approx A''$ implies $A' \approx A''$.
    More precisely,
    \begin{equation} \label{eq:ruzsa}
        ( A \approx_{(N)} A'\ \mbox{and}\ A'\approx_{(N')} A'' )
        \quad\implies\quad
        A \approx_{(N+N')} A''.
    \end{equation}
    Taking $\rho$-neighborhoods if necessary, we may assume that $\abs{A_y}_\rho\sim\rho^{-1}\abs{A_y}$ and similarly for $A'_y$.
    Write $X = xA_i \times A_j \subset [0,1]^{2}$ and $X_y = A_y \times A'_y \subset X$.
    Note that
    \[
        \abs{X_y}_\rho \gtrsim \rho^{20\tau} \abs{X}_\rho, \quad y\in Y_0.
    \]
    Let $\nu_0 := \nu(Y_{0})^{-1}\nu|_{Y_{0}}$. From Fubini and the Cauchy-Schwarz inequality applied to the function $u \mapsto \int_{Y} \mathbf{1}_{X_y}(u) \,d \nu_0(y)$, we infer that
    \begin{align*}
        \iint_{Y \times Y} \abs{X_y \cap X_z} \,d \nu_0(y)\, d\nu_0(z) & = \iint_{Y\times Y} \int\mathbf{1}_{X_y\cap X_z}(u)\,du \,d \nu_0(y) \,d \nu_0(z)               \\
                                                                       & \ge \left( \int_Y \int\mathbf{1}_{X_y}(u) \, du\, d\nu_0(y) \right)^2 \gtrsim \rho^{40\tau}|X|.
    \end{align*}
    The same inequality holds for the $\rho$-covering numbers. Hence, for some $y_\star \in Y_{0}$,
    \[
        \int_Y \abs{X_y \cap X_{y_\star}}_\rho\,d \nu_0(y) \gtrsim \rho^{40\tau} \abs{X}_\rho.
    \]
    Since also $\abs{X_y\cap X_{y_\star}}_\rho\leq\abs{X}_\rho$ for all $y\in Y_0$, we see that
    \[
        \nu_0(Y_1)\gtrsim \rho^{40\tau}\quad\text{where }Y_1=\left\{y :  \abs{X_y \cap X_{y_\star}}_\rho \gtrapprox \frac{1}{2}\rho^{40\tau}\abs{X_\rho}\right\}.
    \]
    Recalling \eqref{eq:nu-Y0}, we see that $\nu(Y_1)\gtrsim \rho^{41\tau}$. By Lemma~\ref{lem:energy} and the assumption that $\delta^{-\epsilon}\leq\rho^{-\tau}$, there exists a non-empty $(\rho,s,\rho^{-250\tau})$-set $Y\subset Y_1$.

    Abbreviate $A_{y_\star} =: A_\star$ and $A'_{y_\star} =: A'_\star$, thus $X_{y_{\star}} = A_{\star} \times A_{\star}'$.
    Then, for every $y \in Y$,
    \begin{equation}
        \label{eq:AstarCap}
        \abs{A_y \cap A_\star}_\rho \gtrapprox \rho^{40\tau}\abs{A_1}_\rho \,
        \text{ and }\,
        \abs{A'_y \cap A'_\star}_\rho \gtrapprox \rho^{40\tau}\abs{A_2}_\rho.
    \end{equation}
    For $y \in Y$, by \eqref{eq:aAcApcb},
    \[
        A_y \approx_{(40)} yA'_y \approx_{(40)} A_y
    \]
    so Ruzsa's triangle inequality in the form \eqref{eq:ruzsa} yields
    \[
        A_y \approx_{(80)} A_y.
    \]
    Using \eqref{eq:AstarCap} and the definition of the symbol $\approx$, we get $A_y \cap A_\star \approx_{(120)} A_y$, and for the same reason $A_y \cap A_\star \approx_{(120)} A_\star$.
    Hence, for $y\in Y$ we have
    \[
        A_\star \approx_{(120)} A_y\cap A_\star
        \approx_{(120)} A_y
        \approx_{(40)} yA'_y
        \approx_{(120)} y(A'_y \cap A'_\star)
        \approx_{(120)} yA'_\star,
    \]
    and therefore
    \begin{equation}
        \label{piysmall}
        \abs{\pi_y(A_\star\times A'_\star)}_\rho = \abs{A_\star-yA'_\star}_\rho
        \lessapprox \rho^{-520\tau } \cdot \abs{A_\star}_\rho^{\frac{1}{2}}\cdot\abs{A_\star'}_\rho^{\frac{1}{2}}.
    \end{equation}
    From $A_{\star} = A_{y_{\star}} \subset xA_{i}$ and \eqref{form26},
    \[
        \abs{A_\star}_\rho \lesssim \rho^{-1} \abs{A_i}
        \lesssim \rho^{-s - 3\tau},\]
    and similarly $\abs{A_\star'}_\rho\lesssim\rho^{-s-3\tau}$, so \eqref{piysmall} implies
    \begin{equation}\label{piysmall2}
        \abs{\pi_y(A_\star\times A'_\star)}_\rho \lesssim \rho^{-s-523\tau}.
    \end{equation}
    Let $c_0=\frac{1}{24}$ be the universal constant given by Theorem~\ref{thm:rescy}.
    Choosing $\tau = \frac{c_0}{524}t$, we find that $Y$, $A_\star$ and $A_\star'$ are $(\rho,s,\rho^{-c_0t})$-sets, so there exists $y \in Y$ such that
    \[
        \abs{\pi_y(A_\star\times A'_\star)}_\rho \geq \rho^{-s -c_0t}.
    \]
    This contradicts \eqref{piysmall2} and therefore \eqref{flatteningPi} must hold.
\end{proof}

\subsection{Proof of quantitative Fourier decay}

The conclusion of the proof of Theorem~\ref{second} uses the flattening lemma, the base case Proposition~\ref{basecase}, and the elementary Lemma~\ref{order} to reorganise additive and multiplicative convolutions.

\begin{proof}[Proof of Theorem~\ref{second}] Recall the hypotheses: $\mu_{1},\ldots,\mu_{n}$ are probability measures on $[1,2]$ satisfying $I_{\sigma}^{\delta}(\mu_{i}) \leq \delta^{-\epsilon}$ for all $1 \leq i \leq n$, where $\sigma > 0$, $n \geq C\sigma^{-1} \geq 2$ and $\epsilon \in (0,\epsilon(\sigma)]$. In fact, it turns out that the (absolute) constant $C \geq 1$ can be taken to be $C = 4C_{0}$, where $C_{0} \geq 1$ is the absolute constant provided by Lemma \ref{flattening}. With this notation, we will need
    \begin{equation}\label{form27} 0 < \epsilon < \epsilon(\sigma) := \frac{\sigma^{\ceil{C_{0}/\sigma}}}{43C^{\ceil{C_{0}/\sigma}}}. \end{equation}
    Under these hypotheses, we claim that
    \begin{displaymath} \abs{(\mu_1\ff\cdots\ff\mu_{n})^\wedge(\xi)} \leq \delta^{\tau}, \qquad |\xi| \sim \delta^{-1}, \end{displaymath}
    where $\tau \geq e^{-C\sigma^{-1}}$, and provided that $\delta > 0$ is sufficiently small.

    We assume that $\sigma < \tfrac{3}{4}$, since otherwise the conclusion follows from Proposition~\ref{basecase}. Write
    \begin{equation} \label{eq:def-ell}
        \ell := \ceil{C_{0}/\sigma} \leq n/2.
    \end{equation}
    Define $\Pi_1 = \mu_1$, and for $2 \leq k \leq \ell$,
    \[
        \Pi_k = (\Pi_{k-1}\ff\mu_k)\mm(\Pi_{k-1}\ff\mu_k).
    \]
    By a single application of Lemma \ref{flattening} to the measures $\mu_{1},\mu_{2}$, we find $I_{\sigma + \sigma/C_{0}}(\Pi_{2}) \leq \delta^{-C_{0}\epsilon/\sigma}$. Next, a second application Lemma \ref{flattening} to the measures $\Pi_{2}$ and $\mu_{3}$ shows that
    \begin{displaymath} I_{\frac{3\sigma}{C_{0}}}(\Pi_{3}) \lesssim I_{\sigma + 2\sigma/C_{0}}(\Pi_{3}) \leq \delta^{-C_{0}^{2}\epsilon/\sigma^{2}}, \end{displaymath}
    provided that $\delta > 0$ is small enough. Fix $\eta := \tfrac{1}{43} < \tfrac{1}{6} - \tfrac{1}{7}$. Continuing in this manner at most $\ell$ times and recalling the choice of $\epsilon$ in \eqref{form27}, we find
    \[
        I_{\frac{2}{3}}^{\delta}(\Pi_{\ell}) \leq I_{\frac{\ell\sigma}{C_{0}}}^\delta(\Pi_{\ell}) \leq \delta^{-C_{0}^{\ell}\epsilon/\sigma^{\ell}} \leq \delta^{-\eta},
    \]
    provided again that $\delta > 0$ is small enough.  Using the general inequality $\|\nu_{\delta}\|_{2}^{2} \lesssim \delta^{s - 1}I_{s}^{\delta}(\nu)$ for $0 < s < 1$, we infer that
    \begin{equation}\label{form28}
        I_{\frac{2}{3}}^\delta(\Pi_\ell) \leq \delta^{-\eta} \quad \Longrightarrow \quad \|(\Pi_{\ell})_{\delta}\|_{2}^{2} \lesssim \delta^{(2/3 - \eta) - 1}. \end{equation}
    For reasons to become apparently shortly, we then perform a similar construction for the measures $\{\mu_{\ell + 1},\ldots,\mu_{2\ell}\} \subset \{\mu_{1},\ldots,\mu_{n}\}$. Let $\Pi_1'=\mu_{\ell+1}$ and for $2 \leq k \leq \ell$,
    \[
        \Pi_k' = (\Pi_{k-1}'\ff\mu_{\ell + k})\mm(\Pi_{k-1}'\ff\mu_{\ell + k}).
    \]
    By exactly the same reasoning which led to \eqref{form28}, we now have
    \[
        \|(\Pi_{\ell}')_{\delta}\|_{2}^{2} \lesssim \delta^{(2/3 - \eta) - 1}.
    \]
    Proposition~\ref{basecase} now implies that for all $|\xi|\sim\delta^{-1}$,
    \[
        \abs{\widehat{\Pi_\ell \ff\Pi_\ell'}(\xi)} \lesssim \delta^{\frac{(4/3 - 2\eta) - 1}{2}} = \delta^{\tfrac{1}{6} - \eta} < \delta^{\frac{1}{7}},
    \]
    since $\eta < \tfrac{1}{6} - \tfrac{1}{7}$. Applying Lemma~\ref{order} to $\mu = \Pi_{\ell-1}\ff\mu_\ell$ and $\nu=\Pi_\ell'$, we get
    \[
        \abs{((\Pi_{\ell-1}\ff\mu_{\ell}) \ff\Pi_\ell')^\wedge(\xi)}^2 \leq \abs{\widehat{\Pi_\ell\ff\Pi_\ell'}(\xi)} \leq \delta^{\frac{1}{7}}
    \]
    and repeating this argument $\ell-1$ times,
    \[
        \abs{(\mu_1\ff\cdots\ff\mu_{\ell}\ff\Pi_\ell')^\wedge(\xi)}^{2^{\ell-1}} \leq \delta^{\frac{1}{7}}.
    \]
    Lemma~\ref{order} applied again $\ell-1$ times for $\Pi_\ell'$ yields
    \[
        \abs{(\mu_1\ff\cdots\ff\mu_{\ell}\ff\mu_{\ell+1}\ff\cdots\ff\mu_{2\ell})^\wedge(\xi)}^{4^{\ell-1}} \leq \delta^{\frac{1}{7}}
    \]
    and thus
    \begin{equation}\label{form29}
        \abs{(\mu_1\ff\cdots\ff\mu_{2\ell})^\wedge(\xi)} \leq \delta^{2^{-(2\ell+1)}}.
    \end{equation}
    Writing $(\mu_1\ff\cdots\ff\mu_n)^\wedge(\xi)$ as an average of $(\mu_1\ff\cdots\ff\mu_{2\ell})^\wedge(a\xi)$, for $a \in \spt(\mu_{2\ell+1}\ff\cdots\ff\mu_n) \subset [1,\infty)$, the same bound holds for $\abs{(\mu_1\ff\cdots\ff\mu_n)^\wedge(\xi)}$. Recalling \eqref{eq:def-ell} and letting $\tau = 2^{-(2\ell+1)} \geq 2^{-3\ell} \geq 2^{-4C_0\sigma^{-1}}$, we conclude that
    \begin{equation*}
        \abs{(\mu_1\ff\dots\ff\mu_{n})^\wedge(\xi)} \leq \delta^{\tau}, \qquad |\xi| \sim \delta^{-1}.
    \end{equation*}

\end{proof}


\end{document}